\documentclass[10pt]{article}
\usepackage[latin1]{inputenc}
\usepackage[T1]{fontenc}

\usepackage[bitstream-charter]{mathdesign}

\usepackage{hyperref}
\hypersetup{
    hyperfootnotes=false,
    colorlinks=true,
    linkcolor=black,
    citecolor=black,
    filecolor=black,
    urlcolor=black
}

\usepackage[usenames,dvipsnames,svgnames,table]{xcolor}
\usepackage{amsmath}
\usepackage{amsthm}
\swapnumbers
\usepackage[alphabetic]{amsrefs}
\usepackage[all]{xy}
\usepackage{mathtools}
\usepackage{stmaryrd}
\usepackage{enumitem}
\usepackage{fixltx2e}
\usepackage[defaultlines=3,all]{nowidow}
\usepackage{tabularx}

\usepackage{geometry}

\usepackage{microtype}

\theoremstyle{plain}
\newtheorem{thm}{Theorem}[subsection]
\newtheorem{lma}[thm]{Lemma}
\newtheorem{cor}[thm]{Corollary}
\newtheorem{prop}[thm]{Proposition}
\newtheorem*{announcethm}{Theorem}

\theoremstyle{definition}
\newtheorem{dfn}[thm]{Definition}

\theoremstyle{remark}
\newtheorem{rem}[thm]{Remark}
\newtheorem{ex}[thm]{Example}
\newtheorem{conv}[thm]{Convention}
\newtheorem{notn}[thm]{Notation}

\DeclareMathOperator{\Aut}{Aut}
\DeclareMathOperator{\Chow}{CH}
\DeclareMathOperator{\chow}{ch}
\DeclareMathOperator{\codim}{codim}
\DeclareMathOperator{\coker}{coker}

\DeclareMathOperator{\cone}{cone}
\DeclareMathOperator{\Cyc}{Z}
\DeclareMathOperator{\Db}{D^b}
\DeclareMathOperator{\divis}{div}
\DeclareMathOperator{\Dperf}{D^{perf}}

\DeclareMathOperator{\Ext}{Ext}
\DeclareMathOperator{\Hom}{Hom}
\DeclareMathOperator{\id}{id}
\DeclareMathOperator{\im}{im}
\DeclareMathOperator{\Ind}{Ind}
\DeclareMathOperator{\Kzero}{K_0}
\DeclareMathOperator{\length}{length}
\DeclareMathOperator{\Min}{Min}
\DeclareMathOperator{\Mod}{\!-mod}

\DeclareMathOperator{\proj}{\!-proj}
\DeclareMathOperator{\Proj}{Proj}

\DeclareMathOperator{\Res}{Res}
\DeclareMathOperator{\Spc}{Spc}
\DeclareMathOperator{\stab}{\!-stab}
\DeclareMathOperator{\supp}{supp}

\setcounter{tocdepth}{2}

\begin{document}
\title{Chow groups of tensor triangulated categories}
\author{Sebastian Klein}
\date{}

\maketitle

\begin{abstract}
	We recall P. Balmer's definition of tensor triangular Chow group for a tensor triangulated category $\mathcal{K}$ and explore some of its properties. We give a proof that for a suitably nice scheme $X$ it recovers the usual notion of Chow group from algebraic geometry when we put~$\mathcal{K} = \Dperf(X)$. Furthermore, we identify a class of functors for which tensor triangular Chow groups behave functorially and show that (for suitably nice schemes) proper push-forward and flat pull-back of algebraic cycles can be interpreted as being induced by the derived inverse and direct image functors between the bounded derived categories of the involved schemes. We also compute some examples for derived and stable categories from modular representation theory, where we obtain tensor triangular cycle groups with torsion coefficients. This illustrates our point of view that tensor triangular cycles are elements of a certain Grothendieck group, rather than $\mathbb{Z}$-linear combinations of closed subspaces of some topological space.
\end{abstract}

\tableofcontents

\pagestyle{headings}
\section{Introduction}
A basic topic in algebraic geometry is the study of algebraic cycles on a variety $X$ under the equivalence relation of rational equivalence. This is usually formalized by the Chow group 
\[\Chow(X) = \bigoplus_p \Chow^p(X)\]
where $\Chow^p(X)$ is the free abelian group on subvarieties $Y \subset X$ of codimension $p$, modulo the subgroup of cycles rationally equivalent to zero (i.e.\ those that appear as the divisor of a rational function on a subvariety of codimension $p-1$).

How should one approach the subject from the point of view of the derived category of~$X$? In \cite{balmer2005spectrum}, it is shown that we can reconstruct $X$ from its derived category of perfect complexes $\Dperf(X)$ considered as a \emph{tensor triangulated category}. Thus, it should also be possible to reconstruct $\Chow(X)$ from $\Dperf(X)$ ``in purely \emph{tensor triangular} terms''. More precisely one would like to construct for each $p \geq 0$ a functor $\Chow^{\Delta}_p(-)$, that takes a tensor triangulated category $\mathcal{K}$ and produces a group $\Chow^{\Delta}_p(\mathcal{K})$ such that~$\Chow^{\Delta}_p(\Dperf(X)) \cong \Chow^p(X)$.

In this article we show that such a construction is given by a definition of $\Chow^{\Delta}_p(-)$ suggested to the author by P. Balmer in 2011 and now published in \cite{balmerchow}. The starting point here is the observation that when one filters the category $\Dperf(X)$ by codimension of support, the successive subquotients split as a coproduct of ``local categories'' (cf. \cite{balmerfiltrations}), analogously to what happens when one performs the same procedure for the abelian category~$\mathrm{Coh}(X)$. One continues to define the codimension $p$ cycle group of $\Dperf(X)$ as the Grothendieck group of the $p$-th subquotient of the filtration. We then obtain a definition of the codimension-$p$ Chow group of $\Dperf(X)$ by analogy with Quillen's coniveau spectral sequence (see \cite{quillenhigher}*{\S 7}). We prove:
\begin{announcethm}[\ref{agreementthm}]
	Let $X$ be a non-singular, separated scheme of finite type over a field. Endow $\Dperf(X)$ with the opposite of the Krull codimension as a dimension function (cf. Definition \ref{dimfuncdef}). Then for all $p \in \mathbb{Z}$,
	\[ \Chow^{\Delta}_p\left(\Dperf(X)\right) \cong \Chow^{-p}(X) ~.\]
\end{announcethm}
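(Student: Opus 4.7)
The plan is to follow Quillen's proof of the agreement between his $K$-theoretic Chow groups and the classical ones (\cite{quillenhigher}*{\S 7}), transposing each ingredient to the tensor-triangular setting of $\Dperf(X)$.

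First, I would organize the coniveau filtration correctly. Balmer's spectrum reconstruction identifies $\Spc(\Dperf(X))$ with $X$, so the support of a perfect complex is a closed subset of $X$, and the dimension function $-\codim$ yields a descending chain of thick tensor-ideals $\mathcal{F}_q = \Dperf(X)_{(\geq q)}$ consisting of objects supported in codimension at least $q$. This is the triangulated analogue of the filtration that drives Quillen's coniveau spectral sequence on $\mathrm{Coh}(X)$.

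Second, and this is the heart of the proof, I would invoke the filtration decomposition from \cite{balmerfiltrations} to write
\[ \mathcal{F}_q / \mathcal{F}_{q+1} \;\simeq\; \bigsqcup_{x \in X^{(q)}} \mathcal{L}_x, \]
where each $\mathcal{L}_x$ is a local tensor-triangulated category attached to the codimension-$q$ point $x$. Since $\Kzero$ takes coproducts of triangulated categories to direct sums, the tensor-triangular cycle group at index $-q$ becomes $\bigoplus_{x \in X^{(q)}} \Kzero(\mathcal{L}_x)$. I would then identify each $\mathcal{L}_x$, up to triangulated equivalence, with the category of perfect complexes of $\mathcal{O}_{X,x}$-modules with cohomology supported at the closed point; since $X$ is non-singular, $\mathcal{O}_{X,x}$ is a regular local ring, the residue field $k(x)$ admits a Koszul resolution by finite free modules, and a standard dévissage (together with additivity of length on short exact sequences of finite-length modules) shows that $[k(x)]$ generates, giving $\Kzero(\mathcal{L}_x) \cong \mathbb{Z}$. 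This recovers the classical cycle group $\Cyc^q(X)$.

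Third, to pass from cycles to Chow groups, I would verify that the boundary map in the tensor-triangular coniveau sequence coincides with the classical divisor homomorphism
\[ \bigoplus_{y \in X^{(q-1)}} K_1(k(y)) \longrightarrow \bigoplus_{x \in X^{(q)}} \mathbb{Z}, \qquad f \longmapsto \sum_{x} \ord_x(f)\,[x]. \]
Once this is established, the cokernel is by construction $\Chow^q(X)$, and the sign in the statement of the theorem merely records the convention $\mathrm{dim} = -\codim$: the tensor-triangular index $p$ corresponds to classical codimension $-p$.

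The principal obstacle will be the second step. Making the subquotient decomposition precise and compatible with the tensor structure uses Balmer's local-to-global filtration machinery in an essential way, and the computation $\Kzero(\mathcal{L}_x) \cong \mathbb{Z}$ is a genuine input requiring regularity of $\mathcal{O}_{X,x}$. Once these local pieces are pinned down, identifying the boundary with the classical $\divis$ should reduce, via the Verdier quotient description, to the localization sequence for a discrete valuation ring, which is the classical origin of the divisor map.
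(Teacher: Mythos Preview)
Your identification of the cycle groups via the local decomposition of \cite{balmerfiltrations} is correct and is exactly the heuristic the paper records in Remark~\ref{altcycrem}. However, the paper's actual proof of Theorem~\ref{agreementthm} does \emph{not} go through Balmer's local decomposition. Instead it compares the triangulated coniveau filtration directly to Quillen's abelian one by proving equivalences
\[
\Db(X)_{(q)} \simeq \Db(M^{-q}) \quad\text{and}\quad \Db(X)_{(p)}/\Db(X)_{(p-1)} \simeq \Db(M^{-p}/M^{-p+1}),
\]
where $M^s \subset \mathrm{Coh}(X)$ is the Serre subcategory of sheaves of codimension of support $\geq s$. The key technical ingredient is an Artin--Rees type lemma (Lemma~\ref{kellercondition}) verifying Keller's criterion \cite{keller}*{1.15} for these equivalences. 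Once they are in place, the maps $i,q,j$ in the defining diagram for $\Chow^{\Delta}_p$ are identified on the nose with the maps $i_0,q_0$ (and $j=\id$) from Quillen's abelian localisation sequence, and the result follows immediately from \cite{quillenhigher}*{\S7, Proposition~5.14}.

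The reason the paper takes this route rather than yours is precisely your third step, which as written contains a genuine gap. The tensor-triangular Chow group is defined as the quotient of $\Cyc^{\Delta}_p$ by $j\circ q(\ker(i))$, where $i:\Kzero(\mathcal{K}_{(p)}) \to \Kzero(\mathcal{K}_{(p+1)})$; there is \emph{no} $K_1$-term and no boundary map $d_1$ in this definition. Your plan to ``verify that the boundary map in the tensor-triangular coniveau sequence coincides with the classical divisor homomorphism $\bigoplus_y K_1(k(y)) \to \bigoplus_x \mathbb{Z}$'' therefore presupposes a structure that is not there. What must actually be shown is that $j\circ q(\ker(i))$ equals the image of Quillen's $d_1$, and for this you need to know that $\ker(i)$ matches $\ker(i_0)=\im(b_0)$ on the abelian side, which in turn uses the long exact localisation sequence in higher $K$-theory. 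The abelian comparison is exactly what supplies this; your local-decomposition route, having discarded the global $\Kzero(\mathcal{K}_{(p)})$ in favour of its pointwise pieces, gives no handle on $\ker(i)$. One can alternatively build a Waldhausen-style spectral sequence directly on the triangulated side (the paper notes this in the remark following the theorem), but that is additional machinery you have not invoked and it again amounts to importing higher $K$-theory.
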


Apart from reconstructing the classical Chow groups, the definition of $\Chow^{\Delta}_p(\mathcal{K})$ also behaves well in its own right, when we consider it as an invariant of $\mathcal{K}$.  We show that $\Chow^{\Delta}_p(-)$ is functorial for the class of exact functors with a relative dimension (see Definition \ref{funcreldimdef}). These are exact (\emph{not necessarily} $\otimes$-exact) functors that preserve the filtration that the choice of a dimension function induces on $\mathcal{T}$, up to a shift by $n$. We have
\begin{announcethm}[\ref{functoriality}]
	Let $F: \mathcal{K} \to \mathcal{L}$ be an exact functor of relative dimension $n$. Then for all $p \in \mathbb{Z}$, $F$ induces a group homomorphism
	\[\Chow^{\Delta}_p(F): \Chow^{\Delta}_p(\mathcal{K}) \to \Chow^{\Delta}_{p+n}(\mathcal{L})\]
\end{announcethm}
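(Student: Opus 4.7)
The plan is to exploit the fact that a functor of relative dimension $n$ is, by Definition~\ref{funcreldimdef}, compatible with the dimension-based filtrations on $\mathcal{K}$ and $\mathcal{L}$ up to a shift by $n$. Writing $\mathcal{K}_{(p)} \subseteq \mathcal{K}$ for the thick subcategory of objects whose support has dimension at most $p$, the hypothesis gives $F(\mathcal{K}_{(p)}) \subseteq \mathcal{L}_{(p+n)}$ and, analogously, $F(\mathcal{K}_{(p-1)}) \subseteq \mathcal{L}_{(p+n-1)}$. Since $F$ is exact, the universal property of the Verdier quotient then produces an induced exact functor
\[ \overline{F}_p \colon \mathcal{K}_{(p)}/\mathcal{K}_{(p-1)} \longrightarrow \mathcal{L}_{(p+n)}/\mathcal{L}_{(p+n-1)}. \]

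Applying $K_0$ and using that, by the filtration theorem of \cite{balmerfiltrations}, these subquotients split as a coproduct of ``local'' pieces whose Grothendieck group is by definition the tensor-triangular cycle group, I obtain a homomorphism
\[ \Cyc^{\Delta}_p(F) := K_0(\overline{F}_p) \colon \Cyc^{\Delta}_p(\mathcal{K}) \longrightarrow \Cyc^{\Delta}_{p+n}(\mathcal{L}). \]

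To descend this to Chow groups, I would use that $\Chow^{\Delta}_p(\mathcal{K})$ is defined as the cokernel (in analogy with Quillen's coniveau spectral sequence) of a boundary map $\Cyc^{\Delta}_{p+1}(\mathcal{K}) \to \Cyc^{\Delta}_p(\mathcal{K})$ attached to the Verdier localization
\[ \mathcal{K}_{(p-1)} \hookrightarrow \mathcal{K}_{(p)} \twoheadrightarrow \mathcal{K}_{(p)}/\mathcal{K}_{(p-1)}. \]
Exactness of $F$ together with its compatibility with all three filtration layers yields a morphism from this localization sequence to its $n$-shifted counterpart on the $\mathcal{L}$-side. Naturality of the connecting homomorphism in the associated $K_0$ long exact sequence then forces commutativity of the square relating the boundary maps at indices $p$ and $p+n$, so that $\Cyc^{\Delta}_p(F)$ descends to the desired $\Chow^{\Delta}_p(F)$.

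The main obstacle I anticipate is precisely this naturality. While the naturality of the $K_0$ localization sequence for triangulated categories is standard, one still needs to verify that the identification $K_0\bigl(\mathcal{K}_{(p)}/\mathcal{K}_{(p-1)}\bigr) \cong \Cyc^{\Delta}_p(\mathcal{K})$ coming from the coproduct decomposition is itself natural in filtration-preserving exact functors, so that $K_0(\overline{F}_p)$ actually coincides with the expected map between summand decompositions of cycle groups. Once this is in place, well-definedness and additivity of $\Chow^{\Delta}_p(F)$ follow formally.
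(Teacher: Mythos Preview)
Your overall strategy---induce a functor on the filtration subquotients and then check that the subgroup of ``rationally trivial'' cycles is preserved---is the right one, and it is essentially what the paper does. However, two concrete points in your write-up do not match the actual definitions in the paper and would leave genuine gaps.

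First, the cycle group is \emph{not} $\Kzero(\mathcal{K}_{(p)}/\mathcal{K}_{(p-1)})$ but rather $\Kzero\bigl((\mathcal{K}_{(p)}/\mathcal{K}_{(p-1)})^{\natural}\bigr)$, the Grothendieck group of the idempotent completion (Definition~\ref{defcycle}). Your $\overline{F}_p$ therefore does not yet give a map between cycle groups. The fix is easy---compose with the inclusion into $(\mathcal{L}_{(p+n)}/\mathcal{L}_{(p+n-1)})^{\natural}$ and invoke the universal property of idempotent completion to get an exact $\hat{F}_p$ on the completions---but you must actually say this. Relatedly, the coproduct decomposition of \cite{balmerfiltrations} is a \emph{consequence} one may draw afterwards, not the definition, so your worry about its naturality is a non-issue here; the paper's proof never invokes it.

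Second, and more seriously, your description of $\Chow^{\Delta}_p(\mathcal{K})$ as the cokernel of a boundary map $\Cyc^{\Delta}_{p+1}(\mathcal{K}) \to \Cyc^{\Delta}_p(\mathcal{K})$ is not the paper's definition and is not available in this setting: there is no $K_1$ for a bare triangulated category, hence no such connecting map. Definition~\ref{defchow} instead quotients $\Cyc^{\Delta}_p(\mathcal{K})$ by the subgroup $j\circ q(\ker i)$, where $i\colon \Kzero(\mathcal{K}_{(p)}) \to \Kzero(\mathcal{K}_{(p+1)})$ is induced by the inclusion. What you actually need to check is that the induced $\hat{f}$ carries $j_{\mathcal{K}}\circ q_{\mathcal{K}}(\ker i_{\mathcal{K}})$ into $j_{\mathcal{L}}\circ q_{\mathcal{L}}(\ker i_{\mathcal{L}})$. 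This follows from an elementary commutative-diagram argument using only that $F$ restricts to $\mathcal{K}_{(p)} \to \mathcal{L}_{(p+n)}$ and $\mathcal{K}_{(p+1)} \to \mathcal{L}_{(p+n+1)}$ compatibly; no localization long exact sequence or naturality of connecting maps is required.
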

\noindent and we prove that the proper push-forward and flat pull-back morphisms on the classical Chow groups of non-singular varieties can be interpreted as special cases of the above theorem (Theorem \ref{pullbackagrees} and Theorem \ref{proppushforwardagrees}).

After investigating a different notion of rational equivalence in Section \ref{sectaltrat}, we then apply our theory outside of algebraic geometry and proceed to compute some examples from modular representation theory: for a finite group $G$ and a field $k$ whose characteristic divides $|G|$, we look at the bounded derived category $\Db(kG\Mod)$ and the stable module category $kG\stab$. Both categories are naturally tensor triangulated with tensor product $\otimes_k$ and we show that they have isomorphic tensor triangular Chow groups in almost all degrees, which should not come as a big surprise in view of Rickard's equivalence (see \cite{rickardstable}) 
\[kG\stab \cong \Db(kG\Mod)/\Dperf(kG\Mod)~.\]
We prove:
\begin{announcethm}[see Theorem \ref{thmderivedstabiso}]
	Consider $kG\stab$ and $\Db(kG\Mod)$ with the Krull dimension of support as a dimension function on $\Spc(kG\stab)$ and $\Spc(\Db(kG\Mod))$. Then for all $p \geq 0$, there are isomorphisms
	\[\Chow^{\Delta}_p(kG\stab) \cong \Chow^{\Delta}_{p+1}(\Db(kG\Mod))~.\]
\end{announcethm}

We then continue to compute the associated tensor triangular Chow groups for~$G = \mathbb{Z}/ p^n \mathbb{Z}$ and $G= \mathbb{Z}/2\mathbb{Z} \times \mathbb{Z}/2\mathbb{Z}$:
\begin{announcethm}[see Propositions \ref{cyclicexample}, \ref{Kleinfourexample1} and \ref{Kleinfourexample2}] Let $k$ be a field of characteristic $p$. For $G = \mathbb{Z}/ p^n \mathbb{Z}$, we have
	\begin{enumerate}[label=(\roman*)]
		\item $\Chow^{\Delta}_i (kG\stab) = 0 \quad \forall i \neq 0$,
		\item $\Chow^{\Delta}_0 (kG\stab) \cong \mathbb{Z}/ p^n \mathbb{Z}$,
	\end{enumerate}
	and if $p=2$ and $H=\mathbb{Z}/2\mathbb{Z} \times \mathbb{Z}/2\mathbb{Z}$ then
	\begin{enumerate}[resume,label=(\roman*)]
		\item $\Chow^{\Delta}_i (kH\stab) = 0 \quad \forall i \neq 0,1$,
		\item $\Chow^{\Delta}_0 (kH\stab) \cong \mathbb{Z}/ 2 \mathbb{Z}$,
		\item $\Chow^{\Delta}_1 (kH\stab) \cong  \mathbb{Z}/ 2 \mathbb{Z}$ if $k$ is algebraically closed,
	\end{enumerate}
	when we endow both $kG\stab$ and $kH\stab$ with the Krull dimension as a dimension function.
\end{announcethm}
In the course of the above computations, we also see that it is possible to obtain cycle groups with torsion coefficients (see Proposition \ref{cyclicexample}), which contrasts with the situation in the algebro-geometric case. This illustrates that we view a general cycle, rather than as a $\mathbb{Z}$-linear combination of irreducible subspaces of codimension $p$ of the spectrum $\Spc(\mathcal{K})$, as an element of a Grothendieck group $\Kzero(\mathcal{K}_{(p)}/\mathcal{K}_{(p-1)})$. Only in the non-singular algebro-geometric examples does this produce coefficients in $\mathbb{Z}$, due to the ``coincidence'' that the Grothendieck group of the bounded derived category of finite-length modules over a local ring is isomorphic to~$\mathbb{Z}$. We finish the representation-theoretic part of our story by showing that  stable restriction and induction functors fit into the framework of functors with a relative dimension (see Theorems \ref{thmresreldim0}, \ref{thmreldimind0}).\\[0.5cm]
\textbf{Acknowledgments:} With the exception of some minor changes, the results in this paper are all contained in the author's Ph.D.\ thesis, which was written at Utrecht University and jointly supervised by Paul Balmer and Gunther Cornelissen. The author would like to thank both of them for their support. The members of the thesis committee and an anonymous referee also pointed out some improvements and corrections, for which the author is grateful. The author acknowledges the support of the European Union for the ERC grant No.\ 257004-HHNcdMir.

\section{Definitions and conventions}
Let us recall some of the theory that we will use in the following.

\subsection{Chow groups in algebraic geometry}
We aim at generalizing the study of cycles on an algebraic variety modulo rational equivalence. The basic setup of this theory is as follows: for an algebraic variety $X$ (by which we shall mean a separated scheme of finite type over a field) one looks for each $p \geq 0$ at the \emph{codimension $p$ cycle group} $\Cyc^p(X)$, the free abelian group on subvarieties (= closed integral subschemes) of codimension $p$ in $X$. One now introduces an equivalence relation on $\Cyc^p(X)$. Two cycles in $\Cyc^p(X)$ are considered \emph{rationally equivalent} if there exists a finite number of subvarieties $Y_i \subset X$ of codimension $p-1$ and elements of the function fields $f_i \in \mathrm{K}(Y_i)$ such that the difference of the two cycles is equal to the sum of the cycles $\divis(f_i)$, the divisors associated to the functions $f_i$. The divisors $\divis(f_i)$ should be thought of as the sum of the zeroes of $f_i$ minus the sum of the poles of $f_i$, both counted with multiplicities (see \cite{fulton} for the formal definition). The cycles rationally equivalent to zero form a subgroup of $\Cyc^p(X)$ and the corresponding quotient is $\Chow^p(X)$, the \emph{codimension $p$ Chow group of $X$}.

\subsection{Tensor triangular geometry}
The main objects of study in this article are tensor triangulated categories. We refer the reader to \cite{neemantc} for the basic theory of triangulated categories.
\begin{dfn}[see \cite{balmer2010tensor}*{Definition 3}]
	A tensor triangulated category is an essentially small triangulated category $\mathcal{K}$ endowed with a compatible symmetric monoidal structure. That is, there is a bifunctor
	\[\otimes: \mathcal{K} \times \mathcal{K} \to \mathcal{K}\]
	and a unit object $\mathbb{I}$, together with associator, unitor and commutator isomorphisms: 
	for all objects $X,Y,Z$ in $\mathcal{T}$, we have natural isomorphisms 
	\[X \otimes (Y \otimes Z) \cong (X \otimes Y) \otimes Z, \quad X \otimes \mathbb{I} \cong X \cong \mathbb{I} \otimes X, \quad X \otimes Y \cong Y \otimes X\]
	that satisfy the coherence conditions of \cite{maclanecwm}*{Section XI.1} to make $\mathcal{T}$ a symmetric monoidal category. Furthermore, the bifunctor $\otimes$ is exact in each variable.
\label{defttcat}
\end{dfn}

\begin{rem}
	As an addition to the axiomatic of Definition \ref{defttcat}, one can ask that the following coherence condition holds: by the biexactness of $\otimes$, we have natural isomorphisms $(\Sigma X) \otimes - \cong \Sigma(X \otimes -)$ and $- \otimes \Sigma(Y) \cong \Sigma(- \otimes Y)$ for all objects $X,Y \in \mathcal{T}$. These fit into a diagram
	\[
	\xymatrix{
		(\Sigma X) \otimes (\Sigma Y) \ar[r]^{\sim} \ar[d]^{\wr} & \Sigma(X \otimes \Sigma Y) \ar[d]^{\wr} \\
		\Sigma(\Sigma X \otimes Y) \ar[r]^{\sim} & \Sigma^2(X \otimes Y)
	}
	\]
	which one requires to commute up to a sign, i.e. the composition of the upper and right isomorphisms should equal the composition of the left and lower isomorphisms or its additive inverse (see e.g.\ \cite{balmer2010tensor}). 
	\label{remcoherenceprodcond}
\end{rem}

\begin{dfn}
	A triangulated category $\mathcal{K}$ is called \emph{idempotent complete} if all idempotent endomorphisms in $\mathcal{K}$ split. A full subcategory $\mathcal{J} \subset \mathcal{K}$ is called \emph{dense} if every object of $\mathcal{K}$ is a direct summand of an object of $\mathcal{J}$.
\end{dfn}
Recall that we can always embed a (tensor) triangulated category $\mathcal{K}$ into its \emph{idempotent completion} $\mathcal{K}^{\natural}$ as a dense subcategory. The category $\mathcal{K}^{\natural}$ is idempotent complete and naturally (tensor) triangulated, and the essential image of $\mathcal{K}$ in $\mathcal{K}^{\natural}$ is dense (see \cite{balschlichidem} and \cite{balmer2005spectrum}*{Remark 3.12}).

The main object one studies in \emph{tensor triangular geometry} is the \emph{spectrum} of a tensor triangulated category, whose construction we briefly describe next.
\begin{dfn} \index{Tensor ideal@$\otimes$-ideal} 
	Let $\mathcal{K}$ be a tensor triangulated category. A thick triangulated subcategory $\mathcal{J} \subset \mathcal{K}$ is called
	\begin{itemize}
		\item \emph{$\otimes$-ideal} if $\mathcal{K} \otimes \mathcal{J} \subset \mathcal{J}$.
		\item \emph{prime} if $\mathcal{J}$ is a proper $\otimes$-ideal ($\mathcal{J} \neq \mathcal{K}$) and $A \otimes B \in \mathcal{J}$ implies $A \in \mathcal{J}$ or $B \in \mathcal{J}$ for all objects $A,B \in \mathcal{K}$.
	\end{itemize}
\end{dfn}

\begin{dfn}[see \cite{balmer2005spectrum}] \index{Spectrum of a tensor triangulated category} 
	Let $\mathcal{K}$ be an essentially small tensor triangulated category. The \emph{spectrum} of $\mathcal{K}$ is the set
	\[\Spc(\mathcal{K}) := \lbrace \mathcal{P} \subset \mathcal{K}: \mathcal{P}~\text{is a prime ideal} \rbrace\]
	topologized by the basis of closed sets of the form
	\[\supp(A) := \lbrace \mathcal{P} \in \Spc(\mathcal{K}): A \notin \mathcal{P} \rbrace\]
	for objects $A \in \mathcal{K}$. The set $\supp(A)$ is called the \emph{support of $A$}.
\end{dfn}

Let us give some computations of $\Spc(\mathcal{K})$:
\begin{ex}[see \cite{balmer2005spectrum}, \cite{bkssupport}*{Theorem 9.5}]
	Let $X$ be a quasi-compact, quasi-sep\-a\-rated scheme, then $X \cong \Spc(\Dperf(X))$.  The homeomorphism is given as follows:
	\begin{align*}
		\sigma: X &\to \Spc(\Dperf(X)) \\
		x & \mapsto \left\lbrace A^{\bullet} \in \Dperf(X)| A^{\bullet}_x \cong 0 \text{ in } \Dperf(\mathcal{O}_{X,x})\right\rbrace \subset \Dperf(X)
	\end{align*}
	Moreover, the support $\supp(A^{\bullet})$ of a complex $A^{\bullet} \in \Dperf(X)$ coincides with the support of the total homology of $A^{\bullet}$ on $X$ under this isomorphism. The proof of these statements uses Thomason's classification result from \cite{thomasonclassification}.
	\label{exschemereconstruct}
\end{ex}

\begin{ex}[see \cite{balmer2005spectrum}*{Corollary 5.10}]
	Let $G$ be a finite group and $k$ be a field such that $\mathrm{char}(k)$ divides the order of $G$. Then $\Spc(kG\stab) \cong \mathcal{V}_G(k)$, the \emph{projective support variety} of $k$. The variety $\mathcal{V}_G(k)$ is defined as $\Proj(\mathrm{H}^*(G,k))$, where $\mathrm{H}^*(G,k)$ denotes the cohomology ring of $G$ over $k$. The support $\supp(M)$ of a module $M \in kG\stab$ coincides with the cohomological support of $M$ in $\mathcal{V}_G(k)$ under this isomorphism. The proof of these statements uses the classification of thick $\otimes$-ideals in $kG\stab$ from \cite{bencarlrick}.
\end{ex}

The spectrum of a tensor triangulated category $\mathcal{K}$ gives us the possibility to assign to each object of $\mathcal{K}$ the dimension of its support.
\begin{dfn}[see \cite{balmerfiltrations}] 
	A \emph{dimension function} on $\mathcal{K}$ is a map 
	\[\dim: \Spc(\mathcal{K}) \to \mathbb{Z} \cup \lbrace \pm \infty \rbrace\]
	such that the following two conditions hold:
	\begin{enumerate}
		\item If $\mathcal{Q} \subset \mathcal{P}$ are prime tensor ideals of $\mathcal{K}$, then~$\dim(\mathcal{Q}) \leq \dim(\mathcal{P})$.
		\item If $\mathcal{Q} \subset \mathcal{P}$ and $\dim(\mathcal{Q}) = \dim(\mathcal{P}) \in \mathbb{Z}$, then~$\mathcal{Q} = \mathcal{P}$.
	\end{enumerate}
	For a subset $V \subset \Spc(\mathcal{K})$, we define $\dim(V) := \sup \lbrace \dim(\mathcal{P}) | \mathcal{P} \in V \rbrace$. For every $p \in \mathbb{Z} \cup \lbrace \pm \infty \rbrace$, we define the full subcategory
	\[\mathcal{K}_{(p)} := \lbrace a \in \mathcal{K} : \dim(\supp(a)) \leq p \rbrace ~.\]
	We denote by $\Spc(\mathcal{K})_p$ the set of points $\mathcal{Q}$ of $\Spc(\mathcal{K})$ such that $\dim(\mathcal{Q}) = p$.
	\label{dimfuncdef}
\end{dfn}

\begin{rem}
	From the properties of $\supp(-)$, it follows that $\mathcal{K}_{(p)}$ is a thick tensor ideal in~$\mathcal{K}$. 
\end{rem}

\begin{ex}
	The main examples of dimension functions we will consider are the Krull dimension and the opposite of the Krull co-dimension. For $\mathcal{P} \in \Spc(\mathcal{K})$, its \emph{Krull dimension} $\dim_{\mathrm{Krull}}(\mathcal{P})$ is the maximal length $n$ of a chain of irreducible closed subsets
	\[\emptyset \subsetneq C_0 \subsetneq C_1 \subsetneq \ldots \subsetneq C_n = \overline{\lbrace \mathcal{P} \rbrace}. \]
	Dually, we define the \emph{opposite of the Krull co-dimension} 
	\[-\codim_{\mathrm{Krull}}(\mathcal{P})\] 
	as follows: if we have a chain of irreducible closed subsets of maximal length
	\[ \overline{\lbrace \mathcal{P} \rbrace} = C_0 \subsetneq C_1 \ldots \subsetneq C_n = \text{ maximal irred.~comp.~of $\Spc(\mathcal{K})$ containing $\mathcal{P}$}\]
	we set 
	\[-\codim_{\mathrm{Krull}}(\mathcal{P}) = -n~.\]
	\label{dimfuncex}
\end{ex}

A dimension function determines a filtration of $\mathcal{K}$. We have a chain of $\otimes$-ideals 
\[\mathcal{K}_{(-\infty)} \subset \cdots \subset \mathcal{K}_{(p)} \subset \mathcal{K}_{(p+1)} \subset \cdots \subset \mathcal{K}_{(\infty)} = \mathcal{K}~.\]
The sub-quotients of this filtration have a local description as we will see next. First let us introduce another useful property of tensor triangulated categories.
\begin{dfn}[see \cite{balmer2010tensor}*{Definition 20}]
	A tensor triangulated category $\mathcal{K}$ is called \emph{rigid} if there is an exact functor $D: \mathcal{K}^{\mathrm{op}} \to \mathcal{K}$ and a natural isomorphism $\Hom_{\mathcal{K}}(a \otimes b, c) \cong \Hom_{\mathcal{K}}(b, D(a) \otimes c)$ for all objects $a,b,c \in \mathcal{K}$. The object $D(a)$ is called the \emph{dual} of $a$.
	\label{dfnrigidttcat}
\end{dfn}

\begin{rem}
	From the natural isomorphism
	\[\Hom_{\mathcal{K}}(a \otimes b, c) \cong \Hom_{\mathcal{K}}(b, D(a) \otimes c)\]
	of Definition \ref{dfnrigidttcat}, it follows that  $a \otimes -$ and $D(a) \otimes -$ form an adjoint pair of functors for all objects $a \in \mathcal{K}$.
	\label{remrigidadjoint}
\end{rem}

We now fix a dimension function on a rigid tensor triangulated category $\mathcal{K}$ and look at the sub-quotients of the induced filtration.
\begin{thm}[see \cite{balmerfiltrations}*{Theorem 3.24}] 
	Let $\mathcal{K}$ be a rigid tensor triangulated category equipped with a dimension function $\dim$ such that $\Spc(\mathcal{K})$ is a noetherian topological space. Then, for all $p \in \mathbb{Z}$, there is an exact equivalence
	\[\left(\mathcal{K}_{(p)}/\mathcal{K}_{(p-1)}\right)^{\natural} \to \coprod_{\substack{P \in \Spc(\mathcal{K}) \\ \dim(P) = p}} \Min(\mathcal{K}_P)~.\]
	where $\mathcal{K}_P:= (\mathcal{K}/P)^{\natural}$ and $\Min(\mathcal{K}_P)$ denotes the full triangulated subcategory of objects with support the unique closed point of $\Spc(\mathcal{K}_P)$.
	\label{thmfiltdecomp}
\end{thm}

\begin{rem}
	The exact equivalence of Theorem \ref{thmfiltdecomp} is induced by the functor.
	\begin{align*}
	\mathcal{K}_{(p)}/\mathcal{K}_{(p-1)} &\to \coprod_{\substack{\mathcal{P} \in \Spc(\mathcal{K}) \\ \dim(\mathcal{P}) = p}} \Min(\mathcal{K}/\mathcal{P})\\
	a &\mapsto (Q_{\mathcal{P}}(a)) 
	\end{align*}
	where $Q_{\mathcal{P}}$ is the localization functor $\mathcal{K} \to \mathcal{K}/\mathcal{P}$. It is shown in \cite{balmerfiltrations} that the image of this functor is dense, so it induces an equivalence after idempotent completion on both sides.
\end{rem}

\section{Tensor triangular Chow groups}

\subsection{The definition}

Let us start with a definition of tensor triangular cycle groups and Chow groups right away, following the ideas from \cite{balmerchow}.
\begin{dfn}
	Let $\mathcal{K}$ be a tensor triangulated category as in Definition \ref{defttcat}, equipped with a dimension function. For $p \in \mathbb{Z}$ we define the \emph{$p$-dimensional cycle group of $\mathcal{K}$} as
	\[\Cyc^{\Delta}_{p}(\mathcal{K}) := \Kzero\left( (\mathcal{K}_{(p)}/\mathcal{K}_{(p-1)} )^{\natural}\right)~,\]
	where $\Kzero\left( (\mathcal{K}_{(p)}/\mathcal{K}_{(p-1)} )^{\natural}\right)$ is the Grothendieck group of the Ver\-dier quotient $(\mathcal{K}_{(p)}/\mathcal{K}_{(p-1)})^{\natural}$ and $\mathcal{K}_{(l)} \subset \mathcal{K}$ denotes the full triangulated subcategory of objects with dimension of support $\leq l$ (see Definition \ref{dimfuncdef}), for $l=p,p-1$.
	\label{defcycle}
\end{dfn}
We also need a generalized notion of rational equivalence, which we describe next. Look at the following diagram of subcategories and sub-quotients of~$\mathcal{K}$
\[\xymatrix{
	\mathcal{K}_{(p)} \ar@{^{(}->}[r]^I \ar@{->>}[d]^Q& \mathcal{K}_{(p+1)} \\
	\mathcal{K}_{(p)}/\mathcal{K}_{(p-1)} \ar@{^{(}->}[r]^J & (\mathcal{K}_{(p)}/\mathcal{K}_{(p-1)})^{\natural}
}
\]
where $I,J$ denote the obvious embeddings and $Q$ is the Verdier quotient functor. After applying $\Kzero$ we get a diagram
\[
\xymatrix{
	\Kzero(\mathcal{K}_{(p)}) \ar[r]^i \ar@{->>}[d]^q& \Kzero(\mathcal{K}_{(p+1)}) \\
	\Kzero(\mathcal{K}_{(p)}/\mathcal{K}_{(p-1)}) \ar@{^{(}->}[r]^(0.38){j} & \Kzero\left( (\mathcal{K}_{(p)}/\mathcal{K}_{(p-1)})^{\natural} \right) = \Cyc^{\Delta}_{p}(\mathcal{K})
}
\]
where the lowercase maps are induced by the uppercase functors.

\begin{dfn}
	Let $\mathcal{K}$ be a tensor triangulated category as in Definition \ref{defttcat}, equipped with a dimension function. For $p \in \mathbb{Z}$ we define the \emph{$p$-dimensional Chow group of $\mathcal{K}$} as
	\[\Chow^{\Delta}_{p}(\mathcal{K}) := \Cyc^{\Delta}_{p}(\mathcal{K}) / j \circ q (\ker(i)).\]
	\label{defchow}
\end{dfn}

\begin{rem}
	Let us shed some light on the motivation behind Definitions \ref{defcycle} and \ref{defchow}. The following discussion will be recalled in more detail at the beginning of Section \ref{sectionagreement}. If $X$ is an algebraic variety, then the abelian category $\mathrm{Coh}(X)$ of coherent sheaves on $X$ admits a filtration by Serre subcategories
	\[\cdots \subset M^i \subset M^{i-1} \subset \cdots \subset M^0 = \mathrm{Coh}(X)\]
	where $M^i$ is the full subcategory of $\mathrm{Coh}(X)$ consisting of those coherent sheaves with codimension of support $\geq i$. Using Quillen's d\'evissage theorem, one computes that for all $s \geq 0$,
	\[\Kzero(M^{s}/M^{s+1}) \cong \coprod_{x \in X^{(s)}} \Kzero(k(x)) \cong \coprod_{x \in X^{(s)}} \mathbb{Z} = \Cyc^{s}(X) ~.\]
	Furthermore, we have maps $i: \Kzero(M^s) \to \Kzero(M^{s-1})$ and $p: \Kzero(M^s) \to \Kzero(M^{s}/M^{s+1}) \cong \Cyc^{s}(X)$ induced by the inclusion and quotient functor, respectively. Quillen proves in \cite{quillenhigher} that $p(\ker(i))$ (in the guise of the image of a boundary map of the coninveau spectral sequence) is exactly the subgroup of cycles rationally equivalent to zero (cf.\ Theorem \ref{quillenrat}), and hence $\Kzero(M^{s}/M^{s+1})/p(\ker(i)) \cong \Chow^s(X)$. From this point of view, Definitions \ref{defcycle} and \ref{defchow} can be seen as a tensor triangulated analogon of this description of~$\Chow^s(X)$: we replace $\mathrm{Coh}(X)$ by a tensor triangulated category $\mathcal{K}$ and the subcategories $M^i$ by the subcategories $\mathcal{K}_{(i)}$, which are also defined using the (co)dimension of support of the objects of $\mathcal{K}$. Since $\Kzero(\mathcal{A}) = \Kzero(\Db(\mathcal{A}))$ for any abelian category $\mathcal{A}$, one hopes that the tensor triangular Chow groups of $\Dperf(X)$ recover the usual Chow groups of $X$ as well when $\Dperf(X) = \Db(\mathrm{Coh}(X))$, i.e.\ when $X$ is non-singular. As we will see in Theorem \ref{agreementthm}, this hope is justified.
\end{rem}

\begin{rem}
	Let us point out another analogy between the tensor triangular cycle groups $\Cyc^{\Delta}_p(\mathcal{K})$ and the usual cycle groups of an algebraic variety, which also explains the presence of the idempotent completion in Definition \ref{defcycle}: assume that $\mathcal{K}$ is a tensor triangulated category that is rigid, equipped with a dimension function and such that $\Spc(\mathcal{K})$ is a noetherian topological space. By Theorem \ref{thmfiltdecomp} the quotient functors $Q_P: \mathcal{K} \to \mathcal{K}/P$ for $P \in \Spc(\mathcal{K})$ induce an exact equivalence
	\begin{equation}
	\left(\mathcal{K}_{(p)}/\mathcal{K}_{(p-1)}\right)^{\natural} \to \coprod_{\substack{P \in \Spc(\mathcal{K}) \\ \dim(P) = p}} \Min(\mathcal{K}_P)~.
	\label{eqquotdecomp}
	\end{equation}
	The decomposition (\ref{eqquotdecomp}) is in the main reason why we idempotent-complete the Verdier quotient $\mathcal{K}_{(p)}/\mathcal{K}_{(p-1)}$. Just as in the theory of algebraic cycles, an element of the $p$-dimensional tensor triangular cycle group of $\mathcal{K}$
	\[\Cyc^{\Delta}_{p}(\mathcal{K}) = \Kzero \left( (\mathcal{K}_{(p)}/\mathcal{K}_{(p-1)})^{\natural} \right) \cong \coprod_{\substack{P \in \Spc(\mathcal{K}) \\ \dim(P) = p}} \Kzero \left(\text{Min}(\mathcal{K}_P\right))\]
	can thus be regarded as a sum of $p$-dimensional (relative to the dimension function) irreducible closed subsets of $\Spc(\mathcal{K})$ with coefficients in~$\Kzero \left(\text{Min}(\mathcal{K}_P\right))$. 
	\label{altcycrem}
\end{rem}

In the case that $\mathcal{K} = \Dperf(X)$ for $X$ a non-singular noetherian scheme, we show next that the Grothendieck group $\Kzero \left(\text{Min}(\mathcal{K}_P\right))$ group is isomorphic to $\mathbb{Z}$. This will allows us to conclude that Definition \ref{defcycle} recovers the usual cycle groups of $X$ for $\mathcal{K} = \Dperf(X)$ (see Corollary \ref{corcycrecover}). Let us first recall the following two well-known auxiliary lemmas.
\begin{lma}
	Let $R$ be a commutative local noetherian ring with maximal ideal $\mathfrak{m}$ and $M$ be a finitely generated $R$-module. Then 
	\[\supp(M) = \lbrace \mathfrak{m} \rbrace \Leftrightarrow \length(M) < \infty\]
	\label{lmasupportlength}
	\qed
\end{lma}

\begin{lma}
	Let $R$ be a commutative local ring with maximal ideal $\mathfrak{m}$ and denote by $R\!\mathrm{-fl}$ the abelian category of finite length $R$-modules. Then the map 
	\begin{align*}
	R\!\mathrm{-fl} &\to \mathbb{Z}\\
	M &\mapsto \mathrm{length}(M)
	\end{align*}
	induces an isomoprhism
	\[\Kzero(R\!\mathrm{-fl}) \cong \mathbb{Z}~.\]
	\label{lmalengthk0}
	\qed
\end{lma}

\begin{thm}
	Let $\mathcal{K} = \Dperf(X)$ for $X$ a non-singular noetherian scheme. Then 
	\[\Kzero \left(\mathrm{Min}(\mathcal{K}_P)\right) \cong \mathbb{Z}\]
	for all $P \in \Spc(\mathcal{K}) \cong X$ (see Example \ref{exschemereconstruct}).
	\label{propk0loccatintegers}
\end{thm}
\begin{proof}
	Let $\rho$ denote the isomorphism $\Spc(\Dperf(X)) \to X$, inverse to the map $\sigma$ from Example \ref{exschemereconstruct}. By \cite{balmerfiltrations}*{Section 4.1}, the category $\text{Min}(\mathcal{K}_P)$ is equivalent to
	\[\mathrm{K}^{\mathrm{b}}_{\mathrm{fin.lg.}}(\mathcal{O}_{X,\rho(P)}\mathrm{-free})~,\]
	the bounded homotopy category of complexes of free $\mathcal{O}_{X,\rho(P)}$-modules of finite rank with finite length homology. As $\mathcal{O}_{X,\rho(P)}$ is regular by assumption, every bounded complex of finitely generated $\mathcal{O}_{X,\rho(P)}$-modules is quasi-isomorphic to a bounded complex of free $\mathcal{O}_{X,\rho(P)}$-modules of finite rank. Therefore, if $\Db_{\mathrm{fin.lg.}}(\mathcal{O}_{X,\rho(P)}\Mod)$ denotes the bounded derived category of complexes of finitely generated $\mathcal{O}_{X,\rho(P)}$-modules with finite length homology, the natural functor
	\[\mathrm{K}^{\mathrm{b}}_{\mathrm{fin.lg.}}(\mathcal{O}_{X,\rho(P)}\mathrm{-free}) \to \Db_{\mathrm{fin.lg.}}(\mathcal{O}_{X,\rho(P)}\Mod)\]
	gives rise to an equivalence of categories
	\[\mathrm{K}^{\mathrm{b}}_{\mathrm{fin.lg.}}(\mathcal{O}_{X,\rho(P)}\mathrm{-free}) \cong \Db_{\mathrm{fin.lg.}}(\mathcal{O}_{X,\rho(P)}\Mod).\] 
	The latter category is in turn equivalent to $\Db(\mathcal{O}_{X,\rho(P)}\!\mathrm{-fl})$, the bounded derived category of finite length modules over $\mathcal{O}_{X,\rho(P)}$. Indeed, by Lemma \ref{lmasupportlength}, for a finitely generated module $M$ over $\mathcal{O}_{X,\rho(P)}$, having finite length is the same as being supported on the unique closed point $P_0$ of $\mathrm{Spec}(\mathcal{O}_{X,\rho(P)})$. 
	The result then follows by \cite{keller}*{Section 1.15, Example b)}, where it is shown that for a commutative noetherian ring $R$, and $\mathcal{A} \subset R\Mod$ the full abelian subcategory of finitely generated $R$-modules supported on a closed subscheme $Z$ of $\mathrm{Spec}(R)$, there is an equivalence of categories 
	\[\Db(\mathcal{A}) \cong \mathrm{D}^{\mathrm{b}}_{\mathcal{A}}(R\Mod)~,\] 
	where the latter expression denotes the full subcategory of $\Db(R\Mod)$ consisting of complexes with homology in $\mathcal{A}$. 
	
	Summarizing, we have
	\begin{equation}
	\Kzero \left(\text{Min}(\mathcal{K}_P\right)) \cong  \Kzero \left(\Db(\mathcal{O}_{X,\rho(P)}\!\mathrm{-fl})\right) \cong \Kzero \left(\mathcal{O}_{X,\rho(P)}\!\mathrm{-fl}\right) \cong \mathbb{Z}
	\label{eqExplicitMinIso}
	\end{equation}
	where and the last isomorphism is induced by the length function as in Lemma \ref{lmalengthk0}. 
\end{proof}

\begin{rem}
	We can make the isomorphism $\Kzero \left(\mathrm{Min}(\mathcal{K}_P)\right) \cong \mathbb{Z}$ from Theorem \ref{propk0loccatintegers} explicit if we identify $\Kzero \left(\text{Min}(\mathcal{K}_P\right))$ with $\mathrm{K}^{\mathrm{b}}_{\mathrm{fin.lg.}}(\mathcal{O}_{X,\rho(P)}\mathrm{-free})$ as in the proof of Theorem \ref{propk0loccatintegers}. We compose the isomorphism 
	\begin{align*}
	\Kzero \left(\Db(\mathcal{O}_{X,\rho(P)}\!\mathrm{-fl})\right) &\to \Kzero \left(\mathcal{O}_{X,\rho(P)}\!\mathrm{-fl}\right)\\
	[A_{\bullet}] &\mapsto \sum_i (-1)^i [\mathrm{H}^i(A_{\bullet})]
	\end{align*}
	and the length function as in (\ref{eqExplicitMinIso}) to obtain the following: if $A_{\bullet}$ is a complex in $\mathrm{K}^{\mathrm{b}}_{\mathrm{fin.lg.}}(\mathcal{O}_{X,\rho(P)}\mathrm{-free})$, then the map is given as
	\[\theta_{\rho(P)}: [A_{\bullet}] \mapsto \sum_i (-1)^i \mathrm{length}(\mathrm{H}^i(A_{\bullet}))~.\]
	\label{remexplicitlocaliso}
\end{rem}

\begin{cor}
	Let $X$ be a non-singular noetherian scheme. Let $\mathcal{K} = \Dperf(X)$ be equipped with the opposite of the Krull codimension as a dimension function (see Example \ref{dimfuncex}). Then the map
	\[\theta_{-p} := \coprod_{\substack{\rho(P) \\ \dim(\rho(P)) = -p}} \theta_{\rho(P)}\]
	with $\theta_{\rho(P)}$ as in Remark \ref{remexplicitlocaliso} induces an isomorphism \[\Cyc^{\Delta}_{-p}(\mathcal{K}) \cong \Cyc^p(X)\]
	for all $p \geq 0$.
	\label{corcycrecover}
\end{cor}
\begin{proof}
	Let $p \geq 0$. Using Remark \ref{altcycrem} and the isomorphism $\Spc(\mathcal{K}) \cong X$ we have a chain of isomorphisms 
	\begin{align*}
	\Cyc^{\Delta}_{-p}(\mathcal{K}) &\cong \coprod_{P \in \text{Spc}(\mathcal{K})_{-p}} \Kzero \left(\text{Min}(\mathcal{K}_P\right)) \\
	&\cong \coprod_{\rho(P) \in X^{(p)}} \Kzero\left(\mathrm{K}^{\mathrm{b}}_{\mathrm{fin.lg.}}(\mathcal{O}_{X,\rho(P)}\mathrm{-free})\right) \\ 
	&\cong \coprod_{P \in X^{(p)}} \mathbb{Z} \\
	&\cong  \Cyc^p(X)~,
	\end{align*}
	where the penultimate map is given by $\theta_{-p}$.
\end{proof}

\subsection{Agreement with algebraic geometry}
\label{sectionagreement}
We want to show now that the tensor triangular Chow groups carry their name for a reason. As we will see, they are --- at least in the non-singular case --- not only defined analogously to, but \emph{are} an honest generalization of the classical Chow groups from algebraic geometry. 
\begin{conv}
	We now fix some notation for the rest of the section: if not explicitly stated otherwise, let $X$ denote a separated, non-singular scheme of finite type over a field $k$, and $\Dperf(X)$ be the derived category of perfect complexes of $\mathcal{O}_X$-modules, which is equivalent to $\Db(X)$, the bounded derived category of coherent sheaves on~$X$. We will also assume that $\Dperf(X)$ is equipped with $-\codim_{\mathrm{Krull}}$ as a dimension function.
	\label{convschemeass}
\end{conv}

In order to proceed, we will need some higher algebraic K-theory as developed by Quillen. We recall the following material from \cite{quillenhigher}*{\S7}: consider the abelian category $\mathrm{Coh}(X)$ of coherent sheaves on~$X$. There is a filtration of this category by codimension of support:
\[\cdots \subset M^i \subset M^{i-1} \subset \cdots \subset M^0 = \mathrm{Coh}(X) \]
where $M^p$ denotes the subcategory of coherent sheaves whose codimension of support is $\geq p$. The subcategory $M^p \subset M^{p+1}$ is a \emph{Serre subcategory}, i.e.\ a full subcategory such that if 
\[0 \to A \to B \to C \to 0\] 
is an exact sequence in $M^{p+1}$, then $A,C$ are objects of $M^p$ if and only if $B$ is one. This property allows us to define the quotient abelian category $M^{p+1}/M^{p}$ and thus, for every $p$, there is an exact sequence of abelian categories
\[M^{p+1} \hookrightarrow M^p \twoheadrightarrow M^p/M^{p+1}\]
which induces a long exact localization sequence of K-groups
\begin{equation}
\begin{gathered}
\xymatrix{
	\cdots\ar[r] & \mathrm{K}_j(M^{p+1}) \ar[r]^{i^p_j} &\mathrm{K}_j(M^{p}) 
	\ar[r]^-{q^p_j} & \mathrm{K}_j(M^{p}/M^{p+1}) \ar `r[d]`[l]`[llld]^{b^p_j}`[d][lld] \\
	& \mathrm{K}_{j-1}(M^{p+1}) \ar[r]^-{i^p_{j-1}} &\mathrm{K}_{j-1}(M^{p}) 
	\ar[r]^-{q^p_{j-1}} & \mathrm{K}_{j-1}(M^{p}/M^{p+1}) \ar[r] & \cdots
}
\end{gathered}
\label{localisationseq}
\end{equation}
Combining these long exact sequences for all $p$, we can form the associated exact couple and obtain the Quillen coniveau spectral sequence as in \cite{quillenhigher}*{\S7, Theorem 5.4} with $E_1$-page
\[E^{p,q}_1 = \mathrm{K}_{-p-q}(M^{p}/M^{p+1}).\]
We are especially interested in the boundary map 
\begin{equation}
d_1: \mathrm{K}_1(M^{s-1}/M^{s}) \overset{b^{s-1}_1}{\longrightarrow} \Kzero(M^s) \overset{q^s_0}{\longrightarrow} \Kzero(M^{s}/M^{s+1})
\label{localisationboundary}
\end{equation}
of this spectral sequence. Using that
\begin{equation}
\mathrm{K}_{i}(M^{s}/M^{s+1}) \cong \coprod_{x \in X^{(s)}} \mathrm{K}_i(k(x)) ~,
\label{eqdevissageiso}
\end{equation}
where $X^{(s)}$ denotes the set of points of $X$ whose closure has codimension $s$ in $X$, Quillen proves the following:
\begin{thm}[cf. \cite{quillenhigher}*{\S7, Proposition 5.14}]
	The image of 
	\[d_1: \mathrm{K}_1(M^{s-1}/M^{s}) \longrightarrow \Kzero(M^{s}/M^{s+1}) \cong \coprod_{x \in X^{(s)}} \Kzero(k(x)) \cong \coprod_{x \in X^{(s)}} \mathbb{Z} = \Cyc^{s}(X)\]
	is the subgroup of codimension-$p$ cycles rationally equivalent to zero. In other words, there is an isomorphism~$\coker(d_1) \cong \Chow^s(X)$.
	\label{quillenrat}
\end{thm}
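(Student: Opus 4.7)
The plan is to identify the source and target of $d_1$ as direct sums indexed by codimension-$(s-1)$ and codimension-$s$ points of $X$, and then match $d_1$ term-by-term with the classical divisor map, whose image is by definition the subgroup of cycles rationally equivalent to zero.

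First, for each $p$, I would establish an exact equivalence $M^p/M^{p+1} \simeq \coprod_{x \in X^{(p)}} \mathcal{M}(\mathcal{O}_{X,x})^{\mathrm{fl}}$, where $\mathcal{M}(\mathcal{O}_{X,x})^{\mathrm{fl}}$ denotes finite-length $\mathcal{O}_{X,x}$-modules. Given $\mathcal{F} \in M^p$, sending $\mathcal{F}$ to the tuple of stalks $(\mathcal{F}_x)_{x \in X^{(p)}}$ kills exactly those sheaves with support of codimension $\geq p+1$, and each stalk becomes finite length modulo $M^{p+1}$. Quillen's devissage theorem (applied to the filtration by powers of the maximal ideal at each $x$) then yields $\mathrm{K}_i(\mathcal{M}(\mathcal{O}_{X,x})^{\mathrm{fl}}) \cong \mathrm{K}_i(k(x))$, so that $\mathrm{K}_i(M^p/M^{p+1}) \cong \coprod_{x \in X^{(p)}} \mathrm{K}_i(k(x))$. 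Taking $i=0$, $p=s$ gives the target $\coprod_{x \in X^{(s)}} \mathbb{Z} = \mathrm{Cyc}^s(X)$, and taking $i=1$, $p=s-1$ gives the source $\coprod_{y \in X^{(s-1)}} k(y)^{\times}$.

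Next, since both ends decompose as coproducts over points and the boundary map $d_1$ is compatible with Zariski localisation by naturality of the localisation sequence, it suffices to fix $y \in X^{(s-1)}$, set $Z := \overline{\{y\}}$, and compute the component of $d_1$ on the summand $k(y)^{\times}$, showing it agrees with the Weil divisor $f \mapsto \sum_{x} \mathrm{ord}_x(f)\,[x]$, the sum being over codimension-$s$ points $x \in X$ lying on $Z$. Passing to an affine open $U \ni x$ meeting $Z$, the $x$-component of $d_1$ is, after naturality, the boundary of the localisation sequence $\mathrm{K}_1(\mathrm{Mod}(A)) \to \mathrm{K}_1(K) \xrightarrow{\partial} \mathrm{K}_0(\mathcal{M}_A^{\mathrm{fl}})$, where $A := \mathcal{O}_{Z,x}$ is a one-dimensional Noetherian local ring and $K := k(y)$ is its fraction field.

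The main obstacle is this last boundary computation: writing $f = g/h$ with $g, h \in A$ non-zero-divisors, I would apply additivity to the short exact sequences $0 \to A \xrightarrow{g} A \to A/gA \to 0$ and $0 \to A \xrightarrow{h} A \to A/hA \to 0$ in $\mathrm{Mod}(A)$ to identify $\partial(f) = [A/gA] - [A/hA] \in \mathrm{K}_0(\mathcal{M}_A^{\mathrm{fl}})$; under the devissage isomorphism $\mathrm{K}_0(\mathcal{M}_A^{\mathrm{fl}}) \cong \mathbb{Z}$ given by length, this becomes $\mathrm{length}_A(A/gA) - \mathrm{length}_A(A/hA) = \mathrm{ord}_x(f)$. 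Summing over $x$ then yields $d_1(f) = \mathrm{div}(f)$. Varying $y$, the image of $d_1$ is the subgroup $\sum_{y \in X^{(s-1)}} \{\mathrm{div}(f_y) : f_y \in k(y)^{\times}\}$, which is the definition of cycles rationally equivalent to zero, proving $\mathrm{coker}(d_1) \cong \mathrm{CH}^s(X)$. The delicate point is that the additivity argument used to pin down $\partial$ must be compatible with the way the boundary is constructed in the $Q$-construction; I would invoke the direct comparison in Quillen's §7 (or Gersten's formulation via $K$-theory of exact categories) rather than reprove it from scratch.
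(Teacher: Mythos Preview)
The paper does not give its own proof of this statement: it is quoted as a known result from Quillen \cite{quillenhigher}*{\S7, Proposition 5.14} and used as a black box in the argument for Theorem~\ref{agreementthm}. There is therefore no proof in the paper to compare your proposal against.

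That said, your outline is a faithful sketch of Quillen's own argument. The equivalence $M^p/M^{p+1} \simeq \coprod_{x \in X^{(p)}} \mathcal{M}(\mathcal{O}_{X,x})^{\mathrm{fl}}$, the use of d\'evissage to identify $\mathrm{K}_i$ of finite-length modules with $\mathrm{K}_i(k(x))$, and the reduction of the boundary computation to a one-dimensional noetherian local domain are precisely the steps Quillen takes. Your caveat at the end is well placed: the identification $\partial(f) = [A/gA] - [A/hA]$ is not a formal consequence of ``additivity'' alone but requires Quillen's explicit description of the connecting map in the $Q$-construction localisation sequence (this is the content of the lemma preceding Proposition~5.14 in \cite{quillenhigher}). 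One minor point: when you pass to $A = \mathcal{O}_{Z,x}$ you are implicitly giving $Z = \overline{\{y\}}$ its reduced scheme structure so that $A$ is a domain and every nonzero element is a non-zero-divisor; this should be made explicit.
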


\begin{rem}
The regularity assumption on $X$ is not necessary for Theorem \ref{quillenrat}.
\end{rem}

In our setting, we work with the triangulated category $\Dperf(X) \cong \Db(X)$ instead of the abelian category~$\mathrm{Coh}(X)$. Recall that the defining diagram for the tensor triangular Chow groups in this case is given as follows:
\[
\xymatrix{
	\Kzero(\Db(X)_{(p)}) \ar[r]^i \ar@{->>}[d]^q& \Kzero(\Db(X)_{(p+1)}) \\
	\Kzero(\Db(X)_{(p)}/\Db(X)_{(p-1)}) \ar@{^{(}->}[d]^(0.4){j} &  \\
	\underset{\textstyle = \Cyc^{\Delta}_{p}(\mathcal{K})}{\underbrace{\Kzero\left( (\Db(X)_{(p)}/\Db(X)_{(p-1)})^{\natural} \right)}}&
}
\]
This diagram maps to a similar one involving the related abelian categories:
\begin{equation}\label{agreementdiag}
\begin{gathered}
\xymatrix{
	\Kzero\left(\Db(X)_{(p)}\right) \ar[r]^i \ar@{->>}[d]^q \ar[dr]& \Kzero\left(\Db(X)_{(p+1)}\right) \ar[dr]&\\
	\Kzero\left(\Db(X)_{(p)}/\Db(X)_{(p-1)}\right) \ar@{^{(}->}[d]^(0.4){j} \ar[dr]&  \Kzero\left(M^{-p}\right) \ar[d]^{q_0} \ar[r]^{i_0}& \Kzero\left(M^{-p-1}\right)\\
	\underset{\textstyle = \Cyc^{\Delta}_{p}(\mathcal{K})}{\underbrace{\Kzero\left((\Db(X)_{(p)}/\Db(X)_{(p-1)})^{\natural} \right)}} & \Kzero\left(M^{-p}/M^{-p+1}\right) &
}
\end{gathered}
\end{equation}
The diagonal homomorphisms are all given by the formula
\begin{equation}
[C^{\bullet}] \mapsto \sum_i (-1)^i [H^i(C^{\bullet})]~.
\label{eqk0derivediso}
\end{equation}
We proceed to show that these are actually all isomorphisms, which follows from the fact that there are exact equivalences
\begin{equation}
\Db(X)_{(q)} \cong \Db(M^{-q})
\label{equiv1}
\end{equation}
and
\begin{equation}
\Db(X)_{(q)}/\Db(X)_{(q-1)} \cong \Db(M^{-q}/M^{-q+1})
\label{equiv2}
\end{equation}
for all $q \in \mathbb{Z}$. Indeed, the diagonal maps are then just the usual isomorphisms between $\Kzero(\Db(\mathcal{A}))$ and $\Kzero(\mathcal{A})$ for some abelian category $\mathcal{A}$. This also proves that $j$ is the identity morphism, as the derived category of an abelian category is idempotent complete \cite{balschlichidem}*{Corollary 2.10}.

The proof of the equivalences (\ref{equiv1}) and (\ref{equiv2}) is a consequence of the following theorem:
\begin{thm}[see \cite{keller}*{Section 1.15}]
	Let $\mathcal{B}$ be an abelian category and $\mathcal{A} \subset \mathcal{B}$ a Serre subcategory with quotient $\mathcal{B}/\mathcal{A}$. Assume that the following criterion holds: for each exact sequence
	\[0 \to A \to B \to C \to 0\]
	in $\mathcal{B}$ with $A \in \mathcal{A}$, there is a commutative diagram with exact rows
	\[
	\xymatrix{
		0 \ar[r]& A \ar[r] \ar[d]^{\id}& B \ar[r] \ar[d]^f& C \ar[r] \ar[d]^g& 0 \\
		0 \ar[r]& A \ar[r] & A' \ar[r] & A'' \ar[r] & 0
	}
	\]
	such that $A',A''$ are objects of $\mathcal{A}$. 
	
	Then, there is an exact equivalence of triangulated categories induced by the inclusion
	\[\Db(\mathcal{A}) \to \mathrm{D}^{\mathrm{b}}_{\mathcal{A}}(\mathcal{B})~,\]
	where $\mathrm{D}^{\mathrm{b}}_{\mathcal{A}}(\mathcal{B}) \subset \Db(\mathcal{B})$ denotes the full subcategory of complexes with homology in~$\mathcal{A}$. Furthermore, in the induced sequence of triangulated categories
	\[\Db(\mathcal{A}) \xrightarrow{i} \Db(\mathcal{B}) \xrightarrow{q} \Db(\mathcal{B}/\mathcal{A})~,\]
	the functor $i$ is fully faithful and $\Db(\mathcal{B}/\mathcal{A}) \cong \Db(\mathcal{B})/\Db(\mathcal{A})$ (i.e.\ the sequence is \emph{exact}).
	\label{thmkellercond}
\end{thm}

Let us verify that the conditions for Theorem \ref{thmkellercond} are satisfied in our case.
\begin{lma}
	Let $0 \to A \to B \to C \to 0$ be an exact sequence in~$\mathrm{Coh}(X)$. Then there exist coherent sheaves $A',A''$ on $X$ with $\supp(A'),\supp(A'') \subset \supp(A)$ that fit into a commutative diagram with exact rows
	\[
	\xymatrix{
		0 \ar[r]& A \ar[r] \ar[d]^{\id}& B \ar[r] \ar[d]^f& C \ar[r] \ar[d]^g& 0 \\
		0 \ar[r]& A \ar[r] & A' \ar[r] & A'' \ar[r] & 0
	}
	\]
	\label{kellercondition}
\end{lma}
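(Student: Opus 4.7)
The plan is to construct $A'$ and $A''$ as quotients of $B$ and $C$ modulo a sufficiently high power of the ideal sheaf cutting out $\supp(A)$. Let $\mathcal{I} \subset \mathcal{O}_X$ denote the reduced ideal sheaf of $Z := \supp(A)$. Since $A$ is coherent and $X$ is noetherian, covering $X$ by finitely many affines and taking a maximum of the local exponents gives an integer $n \geq 1$ with $\mathcal{I}^n A = 0$. For an integer $n' \geq n$ to be specified below, set
\[A' := B/\mathcal{I}^{n'} B, \qquad A'' := C/\mathcal{I}^{n'} C,\]
with $f: B \to A'$ and $g: C \to A''$ the canonical projections.

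Commutativity of the resulting diagram is automatic, and the bottom row is just the image of the top row under the right-exact functor $-\otimes_{\mathcal{O}_X} \mathcal{O}_X/\mathcal{I}^{n'}$, so it is right exact. Both $A'$ and $A''$ are coherent (as quotients of coherent sheaves by coherent subsheaves) and are annihilated by $\mathcal{I}^{n'}$, hence supported on $Z$. The only nontrivial point is that the induced map $A \to A'$ is injective, i.e.\ $A \cap \mathcal{I}^{n'} B = 0$ inside $B$. Granting this, the image of $A$ in $A'$ equals $A$ and $A'/A \cong B/(A + \mathcal{I}^{n'}B) \cong C/\mathcal{I}^{n'}C = A''$, yielding full exactness of the bottom row.

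To pin down $n'$ I will invoke the Artin-Rees lemma. Fix a finite affine cover $X = \bigcup_{k=1}^r U_k$ with $U_k = \mathrm{Spec}\, R_k$. Artin-Rees applied to the inclusion of finitely generated $R_k$-modules $A(U_k) \hookrightarrow B(U_k)$ with respect to $\mathcal{I}(U_k)$ yields a constant $c_k \geq 0$ such that
\[A(U_k) \cap \mathcal{I}(U_k)^m B(U_k) \;\subseteq\; \mathcal{I}(U_k)^{m - c_k}\, A(U_k) \qquad \text{for all } m \geq c_k.\]
Choosing $n' \geq n + \max_k c_k$ makes the right-hand side vanish on each $U_k$ (since $\mathcal{I}^n A = 0$), whence $A \cap \mathcal{I}^{n'} B = 0$ globally, as required.

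The main obstacle is precisely this uniform Artin-Rees step: the exponents $c_k$ are chosen locally on each affine, and one must amalgamate them into a single global $n'$, which uses quasi-compactness (and the noetherian hypothesis) of $X$ in an essential way. Once a working $n'$ is in hand, the remaining verifications --- coherence, containment of supports in $Z$, commutativity, and exactness of the bottom row --- are immediate from the right-exactness of $-\otimes_{\mathcal{O}_X} \mathcal{O}_X/\mathcal{I}^{n'}$ and from the vanishing $A \cap \mathcal{I}^{n'} B = 0$.
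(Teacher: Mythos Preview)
Your proof is correct and follows essentially the same strategy as the paper: quotient $B$ and $C$ by a sufficiently high power of the ideal sheaf of $\supp(A)$ and use Artin--Rees to ensure that $A \to B/\mathcal{I}^{n'}B$ remains injective. The only cosmetic difference is that the paper invokes a sheaf-theoretic Artin--Rees lemma directly (citing the Stacks project), whereas you reduce to a finite affine cover and take the maximum of the local Artin--Rees constants; the content is the same.
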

\begin{proof}
	Suppose that $A$ is supported on a closed subscheme with associated ideal sheaf~$I$. As $X$ is noetherian, we can use the sheaf-theoretic version of the Artin-Rees lemma (cf.\ \cite{stacks-project}*{Lemma 29.10.3}) which says that there exists a $c > 0$ such that for all $n > c$ we have~$I^n B \cap A = I^{n-c}(I^c B \cap A)$. Now take some $n_0$ such that $I^{n_0-c}A = 0$, then we get the diagram
	\[
	\xymatrix{
		0 \ar[r]& A \ar[r] \ar[d]^{\id}& B \ar[r] \ar[d]& C \ar[r] \ar[d]& 0 \\
		0 \ar[r]& A \ar[r] & B/(I^{n_0} B) \ar[r] & C/(I^{n_0} C) \ar[r] & 0
	}
	\]
	where the vertical arrows are given by the canonical projections. It is easy to see that the diagram commutes and that all sheaves in the lower row have their support contained in~$\supp(A)$.
\end{proof}
Since we have checked the conditions of Theorem \ref{thmkellercond}, its first statement tells us that the equivalence (\ref{equiv1}) holds, because we can write $\Db(X)_{(q)}$ as $\mathrm{D}^{\mathrm{b}}_{M^{-q}}(\mathrm{Coh}(X))$. The equivalence (\ref{equiv2}) holds by the second statement of Theorem \ref{thmkellercond}, which says that
\[\Db(M^{-q}/M^{-q+1}) \cong \Db(M^{-q})/\Db(M^{-q+1})~,\]
where the latter expression is equivalent to $\Db(X)_{(q)}/\Db(X)_{(q-1)}$ by the first part of Theorem \ref{thmkellercond}.

As we know that the diagonal maps in diagram (\ref{agreementdiag}) are isomorphisms and that $j$ is the identity morphism we see that 
\[j \circ q (\ker(i)) \cong q_0(\ker(i_0)) = q_0(\im(b_0)) = \im(d_1)\] 
(see Theorem \ref{quillenrat}). We have thus proved the following:
\begin{thm}
	Let $X$ be a separated, non-singular scheme of finite type over a field and assume that the tensor triangulated category $\Dperf(X)$ is equipped with the dimension function~$-\codim_{\mathrm{Krull}}$. Then there are isomorphisms
	\[\Cyc^{\Delta}_{p}\left(\Dperf(X)\right) \cong Z^{-p}(X) \qquad \text{and} \qquad \Chow^{\Delta}_{p}\left(\Dperf(X)\right) \cong \Chow^{-p}(X) \]
	for all $p \in \mathbb{Z}$.
	\qed
	\label{agreementthm}
\end{thm}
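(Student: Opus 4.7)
The preceding discussion essentially assembles the proof, so my plan is to verify the identifications in diagram (\ref{agreementdiag}) and then chase definitions. Concretely, I would first establish the two exact equivalences (\ref{equiv1}) and (\ref{equiv2}). The filtration of $\Db(X)$ by $-\codim_{\mathrm{Krull}}$ of support matches, under $\Dperf(X) \cong \Db(X)$, the Serre filtration of $\mathrm{Coh}(X)$ by codimension of support, since the support of a bounded complex of coherent sheaves is the union of the supports of its cohomology sheaves. I would then invoke Keller's criterion from \cite{keller}*{Section 1.15}, which says that if $\mathcal{B} \subset \mathcal{A}$ is a Serre subcategory of an abelian category such that every short exact sequence $0 \to A \to B \to C \to 0$ in $\mathcal{A}$ with $A \in \mathcal{B}$ admits a map to a short exact sequence $0 \to A \to A' \to A'' \to 0$ with $A',A'' \in \mathcal{B}$ and fixing $A$, then the natural functor $\Db(\mathcal{B}) \to \Db(\mathcal{A})$ is a fully faithful embedding with essential image the subcategory of complexes with cohomology in $\mathcal{B}$; a similar statement yields the quotient equivalence. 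Lemma \ref{kellercondition} supplies exactly this lifting condition for $\mathcal{B} = M^p$, so both equivalences follow. Restricting to bounded complexes matches $\Db(X)_{(q)}$ since the support condition is preserved.

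Granted the two equivalences, the diagonal Euler-characteristic maps in diagram (\ref{agreementdiag}) become the classical isomorphisms $\Kzero(\Db(\mathcal{A})) \xrightarrow{\sim} \Kzero(\mathcal{A})$ via $[C^\bullet] \mapsto \sum_i (-1)^i [H^i(C^\bullet)]$, so they are all isomorphisms. Moreover, since the derived category of an abelian category is idempotent complete by \cite{balschlichidem}*{Corollary 2.10}, the inclusion $J$ is an equivalence and hence $j$ is the identity, which identifies $\Cyc^{\Delta}_p(\Dperf(X))$ directly with $\Kzero(M^{-p}/M^{-p+1})$. The standard devissage identification $\Kzero(M^{-p}/M^{-p+1}) \cong \bigoplus_{x \in X^{(-p)}} \Kzero(k(x)) \cong Z^{-p}(X)$ then gives the first half of the theorem.

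For the Chow groups, I would transport the kernel computation through the isomorphic diagram: the commutativity of (\ref{agreementdiag}) gives $j \circ q(\ker i) \cong q_0(\ker i_0)$, and exactness of the Quillen localisation sequence (\ref{localisationseq}) at $\Kzero(M^{-p})$ identifies $\ker(i_0) = \im(b_0^{-p-1})$, whence $q_0(\ker i_0) = \im(q_0 \circ b_0^{-p-1}) = \im(d_1)$ in the notation of (\ref{localisationboundary}). Theorem \ref{quillenrat} then identifies the quotient $\Kzero(M^{-p}/M^{-p+1})/\im(d_1)$ with $\Chow^{-p}(X)$, completing the proof.

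The main obstacle is really lining up Keller's equivalence with our triangulated filtration — that is, checking that the support-codimension filtration on complexes corresponds termwise to the cohomology-support condition, and that the hypotheses of Keller's theorem are met. The support compatibility uses that $X$ is noetherian, and Keller's hypothesis is handled by the Artin-Rees argument of Lemma \ref{kellercondition}; once these two points are in hand, the rest is an exercise in chasing the commutative diagram (\ref{agreementdiag}) and applying Quillen's theorem.
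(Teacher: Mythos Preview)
Your proposal is correct and follows essentially the same route as the paper: establish the equivalences (\ref{equiv1}) and (\ref{equiv2}) via Keller's criterion together with Lemma~\ref{kellercondition}, deduce that the diagonal Euler-characteristic maps in diagram~(\ref{agreementdiag}) are isomorphisms and that $j$ is the identity, and then identify $j\circ q(\ker i)$ with $\im(d_1)$ via the exactness of Quillen's localisation sequence and conclude using Theorem~\ref{quillenrat}. The only remark is that your boundary-map index should be $b_1$ rather than $b_0$ (it is the map $K_1(M^{-p-1}/M^{-p})\to \Kzero(M^{-p})$), but this is cosmetic.
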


A couple of remarks are in order:
\begin{rem}
	The proof of  Theorem \ref{agreementthm} shows that even if $X$ is singular, we have isomorphisms
	\[\Kzero\left(\mathrm{D^b}(X)_{(p)}/\mathrm{D^b}(X)_{(p-1)}\right) \cong \Cyc^{-p}(X)\]
	and
	\[\Kzero\left(\mathrm{D^b}(X)_{(p)}/\mathrm{D^b}(X)_{(p-1)}\right)/q(\ker(i)) \cong \Chow^{-p}(X)\]
	for all $p$. Furthermore, the inclusion functors $\Dperf(X)_{(p)} \to \mathrm{D^b}(X)_{(p)}$ always induce exact functors 
	\[\Dperf(X)_{(p)}/\Dperf(X)_{(p-1)} \to \Db(X)_{(p)}/\Db(X)_{(p-1)}~.\] 
	As $\Db(X)_{(p)}/\Db(X)_{(p-1)}$ is idempotent complete, we obtain exact  functors
	\[\left(\Dperf(X)_{(p)}/\Dperf(X)_{(p-1)}\right)^{\natural} \to \Db(X)_{(p)}/\Db(X)_{(p-1)}\]
	and after applying $\Kzero$ we get induced homomorphisms
	\[\Cyc_p\left(\Dperf(X)\right) \to \Kzero\left(\mathrm{D^b}(X)_{(p)}/\mathrm{D^b}(X)_{(p-1)}\right) \cong \Cyc^{-p}(X)\]
	for all $p$. Similarly, we obtain homomorphisms
	\[\Chow_p\left(\Dperf(X)\right) \to  \Chow^{-p}(X)\]
	for all $p$.
\end{rem}

\begin{rem}
	Let us sketch the argument for a more ``high-level'' proof of Theorem \ref{agreementthm} using Waldhausen models for the categories~$\Dperf(X)_{(p)}$: for $p\in \mathbb{Z}$, we consider the category $\mathrm{Perf}_{(p)}(X)$ of perfect complexes on $X$ with codimension of homological support ~$\geq -p$. This category is a \emph{Waldhausen category}, i.e.\ a category with two classes of morphisms called the \emph{cofibrations} and the \emph{weak equivalences}, which both have to satisfy a list of axioms (see \cite{waldktheory}). For a Waldhausen category $W$, we can define higher algebraic $\mathrm{K}$-groups $\mathrm{K}_i(W)$ for $i \geq 0$ as in \cite{waldktheory}. For $\mathrm{Perf}_{(p)}(X)$, the cofibrations are given by the degree-wise split monomorphisms of complexes, and the weak equivalences are given as the quasi-isomorphisms. If we define the Waldhausen category $\mathrm{Perf}_{(p/p-1)}(X)$ as the category $\mathrm{Perf}_{(p)}(X)$ with the same class of cofibrations but with the weak equivalences those morphisms whose mapping cone is quasi-isomorphic to an object of $\mathrm{Perf}_{(p-1)}(X)$, we obtain a sequence of Waldhausen categories
	\[\mathrm{Perf}_{(p-1)}(X) \to \mathrm{Perf}_{(p)}(X) \to \mathrm{Perf}_{(p/p-1)}(X)\]
	where both functors are given by inclusion. From the localization theorem of \cite{thomason-trobaugh}*{Theorem 1.8.2}, we obtain a long exact localization sequence
	\begin{equation}
	\begin{gathered}
	\xymatrix{
		\cdots\ar[r] & \mathrm{K}_j(\mathrm{Perf}_{(p-1)}(X)) \ar[r] &\mathrm{K}_j(\mathrm{Perf}_{(p)}(X)) 
		\ar[r] & \mathrm{K}_j(\mathrm{Perf}_{(p/p-1)}(X)) \ar `r[d]`[l]`[llld]`[d][lld] \\
		& \mathrm{K}_{j-1}(\mathrm{Perf}_{(p-1)}(X)) \ar[r] &\mathrm{K}_{j-1}(\mathrm{Perf}_{(p)}(X)) 
		\ar[r] & \cdots &
	}
	\end{gathered}
	\label{eqwaldlocseq}
	\end{equation}
	By the regularity of $X$, $\mathrm{Perf}_{p-1}(X)$ and $\mathrm{Perf}_{p}(X)$ coincide with $\mathrm{C}^b(\mathrm{Coh}(X))_{(p-1)}$ and $\mathrm{C}^b(\mathrm{Coh}(X))_{(p)}$, the categories of bounded complexes of coherent sheaves on $X$ with codimension of homological support $\geq -p+1$ and $\geq -p$, respectively. Their Waldhausen $\mathrm{K}$-theory is in turn isomorphic to the Waldhausen $\mathrm{K}$-theory of $\mathrm{C}^b(\mathrm{Coh}(X)^{(-p+1)})$ and $\mathrm{C}^b(\mathrm{Coh}(X)^{(-p)})$ respectively by \cite{thomason-trobaugh}*{Theorem 1.9.8}, as the natural inclusions induce equivalences on the corresponding derived categories. The natural functor 
	\[\mathrm{Perf}_{(p/p-1)}(X) \to \mathrm{C}^b(\mathrm{Coh}(X)^{(-p)}/\mathrm{Coh}(X)^{(-p+1)})\] 
	also induces an equivalence on the level of derived categories and thus we apply \cite{thomason-trobaugh}*{Theorem 1.9.8} again to obtain that the corresponding Waldhausen $\mathrm{K}$-theories of the involved categories coincide. Finally, the comparison to Quillen $\mathrm{K}$-theory of \cite{thomason-trobaugh}*{Theorem 1.11.2 and Theorem 1.11.7} yields that the sequences (\ref{eqwaldlocseq}) are isomorphic to the sequences (\ref{localisationseq}) and by forming the associated exact couple, we get a new spectral sequence which is isomorphic to Quillen's coniveau spectral sequence. In particular, we can talk about the cokernel of the map $d_1$ (as in (\ref{localisationboundary})) in this new spectral sequence which is then isomorphic to the cokernel of $d_1$ in Quillen's coniveau spectral sequence which is in turn isomorphic to $\Chow^{-p}(X)$.
\end{rem}

\begin{prop}
	The isomorphism
	\[\rho_X: \Cyc^{\Delta}_p(\Dperf(X)) \xrightarrow{\sim} \Cyc^{-p}(X)\]
	is explicitly given as follows: if $C^{\bullet}$ is an object of $\Dperf(X)_{(p)}/\Dperf(X)_{(p-1)}$, then
	\[\rho_X([C^{\bullet}]) = \sum_i \sum_{x \in X^{(-p)}} (-1)^i \length_{\mathcal{O}_{X,x}}\left(\mathrm{H}^i\left(C^{\bullet}\right)_x\right) \cdot \overline{\lbrace x \rbrace}~.\]
	\label{propagreement}
\end{prop}
\begin{proof}
	The isomorphism (\ref{eqdevissageiso})
	\[\mathrm{K}_i(M^{p}/M^{p+1}) \xrightarrow{\sim} \coprod_{x \in X^{(p)}} \mathrm{K}_i(k(x))\] 
	is explicitly given as follows: first note that we have an equivalence of categories
	\[M^{p}/M^{p+1} \xrightarrow{\sim} \coprod_{x \in X^{(p)}} \mathcal{O}_{X,x}\!\mathrm{-fl}\]
	induced by the functor 
	\begin{align*}
	M^{p} &\rightarrow \coprod_{x \in X^{(p)}} \mathcal{O}_{X,x}\!\mathrm{-fl}\\
	a &\mapsto (a_x)_{x \in X^{(p)}}
	\end{align*}
	(see e.g.\ \cite{weibelkbook}*{Chapter V, \S 9}) which in turn induces an isomorphism
	\[\mathrm{K_i}(M^{p}/M^{p+1}) \xrightarrow{\sim} \coprod_{x \in X^{(p)}} \mathrm{K}_i\left(\mathcal{O}_{X,x}\!\mathrm{-fl} \right)~.\]
	Then we have an isomorphism
	\[\coprod_{x \in X^{(p)}} \mathrm{K}_i\left(\mathcal{O}_{X,x}\!\mathrm{-fl} \right) \xleftarrow{\sim} \coprod_{x \in X^{(p)}} \mathrm{K}_i(k(x))\]
	given by componentwise \emph{d\'evissage} (see \cite{quillenhigher}), i.e.\ the inclusion of the category of finite-dimensional $k(x)$-vector spaces into $\mathcal{O}_{X,x}\!\mathrm{-fl}$ induces an isomorphism in $\mathrm{K}$-theory. For $i=0$, we have already seen this in Lemma \ref{lmalengthk0}: any element $[a_x] \in \mathrm{K}_0\left(\mathcal{O}_{X,x}\!\mathrm{-fl}\right)$ can be written as $n \cdot [k(x)]$, where $n = \length(a_x)$. We conclude that for $i=0$, the isomorphism (\ref{eqdevissageiso}) is given explicitly as
	\[[a] \mapsto \sum_{x \in X^{(p)}} \length_{\mathcal{O}_{X,x}}(a_x) \cdot \overline{\lbrace x \rbrace}\]
	Precomposing with formula (\ref{eqk0derivediso}), we obtain the explicit description
	\[\rho_X([C^{\bullet}]) = \sum_i \sum_{x \in X^{(-p)}} (-1)^i \length_{\mathcal{O}_{X,x}}\left(\mathrm{H}^i\left(C^{\bullet}\right)_x\right) \cdot \overline{\lbrace x \rbrace}~.\]
	as desired.
\end{proof}
We shall denote the induced isomorphism 
\[\Chow^{\Delta}_p(\Dperf(X)) \to \Chow^{-p}(X)\] 
by $\rho_X$ as well.

\section{Functoriality}
As we now have a reasonable definition of tensor triangular Chow groups at hand, we would like to check that it has the functoriality properties one would expect it to have from the behavior of the algebro-geometric Chow groups.

\subsection{Functors with a relative dimension}
We first have to define which class of functors we allow. In this section, $\mathcal{K}$ and $\mathcal{L}$ will always denote tensor triangulated categories as in Definition \ref{defttcat}, and we assume that both are equipped with a dimension function.
\begin{dfn}
	Let $F: \mathcal{K} \to \mathcal{L}$ be an exact functor. We say that $F$ has \emph{relative dimension $n$} if there exists some $n \in \mathbb{Z}$ such that $F(\mathcal{K}_{(p)}) \subset \mathcal{L}_{(p+n)}$ for all $p$, and $n$ is the smallest integer such that this relation holds.
	\label{funcreldimdef}
\end{dfn}
\begin{rem}
	We \emph{do not} require that $F$ is a tensor functor (cf.\ Proposition \ref{proptensorreldimzero}, Section \ref{example:properpfwd}). The composition of two functors of relative dimension $n$ and $m$ is a functor of relative dimension at most $n+m$. In almost all of the examples that follow, we have~$n=0$ (see Section \ref{subsectdervsstab} for a functor $q: \Db(kG\Mod) \to kG\stab$ of relative dimension $-1$). Note that if $F$ has relative dimension $n$, we can always arrange $n=0$ by replacing the dimension function on $\mathcal{L}$ by an appropriately shifted one.
	\label{reldimrem}
\end{rem}

\begin{thm}
	Let $F: \mathcal{K} \to \mathcal{L}$ be a functor of relative dimension $\leq n$. Then $F$ induces group homomorphisms
	\[\mathrm{z}^n_p(F): \Cyc^{\Delta}_{p}(\mathcal{K}) \to \Cyc^{\Delta}_{p+n}(\mathcal{L}) \qquad \text{and} \qquad \mathrm{c}^n_p(F): \Chow^{\Delta}_{p}(\mathcal{K}) \to \Chow^{\Delta}_{p+n}(\mathcal{L})\]
	for all $p \in \mathbb{Z}$. If $F$ has relative dimension $<n$, then $\mathrm{z}^n_p(F)$ and $\mathrm{c}^n_p(F)$ are both trivial.
	\label{functoriality}
\end{thm}
\begin{proof}
	We have the following commutative diagram
	\[
	\xymatrix{
		\mathcal{K}_{(p)} \ar@{^{(}->}[r]^{J_{\mathcal{K}}} \ar@{->>}[d]^{Q_{\mathcal{K}}} \ar[dr]^{F_p} & \mathcal{K}_{(p+1)} \ar[dr]^{F_{p+1}}& \\
		\mathcal{K}_{(p)}/\mathcal{K}_{(p-1)} \ar@{^{(}->}[d]^{I_{\mathcal{K}}} \ar[dr]^{\overline{F}} & \mathcal{L}_{(p+n)} \ar@{^{(}->}[r]^{J_{\mathcal{L}}} \ar@{->>}[d]^{Q_{\mathcal{L}}} & \mathcal{L}_{(p+n+1)} \\
		(\mathcal{K}_{(p)}/\mathcal{K}_{(p-1)})^{\natural} \ar[dr]^{\hat{F}}& \mathcal{L}_{(p+n)}/\mathcal{L}_{(p+n-1)} \ar@{^{(}->}[d]^{I_{\mathcal{L}}} & \\
		& (\mathcal{L}_{(p+n)}/\mathcal{L}_{(p+n-1)})^{\natural} &
	}
	\]
	where $F_i$ is the restriction of $F$ to $\mathcal{K}_{(i)}$ for $i=p,p+1$, $\overline{F}$ exists because 
	\[F(\mathcal{K}_{p-1}) \subset \mathcal{L}_{p+n-1} = \ker(Q_{\mathcal{L}})\] 
	and $\hat{F}$ exists as $I_{\mathcal{L}} \circ \overline{F}$ is a functor to an idempotent complete category. Applying the functor $\Kzero(-)$ yields the diagram
	\[
	\xymatrix{
		\Kzero \left(\mathcal{K}_{(p)} \right) \ar[r]^{j_{\mathcal{K}}} \ar@{->>}[d]^{q_{\mathcal{K}}} \ar[dr]^{f_p} & \Kzero \left(\mathcal{K}_{(p+1)} \right) \ar[dr]^{f_{p+1}}& \\
		\Kzero \left(\mathcal{K}_{(p)}/\mathcal{K}_{(p-1)} \right) \ar@{^{(}->}[d]^{i_{\mathcal{K}}} \ar[dr]^{\overline{f}} & \Kzero \left(\mathcal{L}_{(p+n)}\right) \ar[r]^{j_{\mathcal{L}}} \ar@{->>}[d]^{q_{\mathcal{L}}} & \Kzero \left(\mathcal{L}_{(p+n+1)} \right) \\
		\Kzero\left((\mathcal{K}_{(p)}/\mathcal{K}_{(p-1)})^{\natural}\right) \ar[dr]^{\hat{f}}& \Kzero\left(\mathcal{L}_{(p+n)}/\mathcal{L}_{(p+n-1)}\right) \ar@{^{(}->}[d]^{i_{\mathcal{L}}} & \\
		& \Kzero\left((\mathcal{L}_{(p+n)}/\mathcal{L}_{(p+n-1)})^{\natural}\right) &
	}
	\]
	where the lowercase arrows are induced by the corresponding uppercase ones. We set $\mathrm{z}^n_p(F) := \hat{f}$. From the diagram, we also deduce that 
	\[\hat{f} \circ i_{\mathcal{K}} \circ q_{\mathcal{K}} (\ker(j_{\mathcal{K}})) \subset i_{\mathcal{L}} \circ q_{\mathcal{L}} (\ker(j_{\mathcal{L}}))\]
	which implies that $\hat{f}$ also induces a homomorphism $\mathrm{c}^n_p(F)$ between the factor groups.
	
	If the relative dimension of $F$ is $m = n-r$ for some $r \geq 1$, then
	\[F(\mathcal{K}_{(p)}) \subset \mathcal{L}_{(p+m)} = \mathcal{L}_{(p+n-r)} \subset \mathcal{L}_{(p+n-1)}~.\]
	Therefore $\mathrm{z}^n_p(F)$ and $\mathrm{c}^n_p(F)$ are both $0$ in this case.
\end{proof}

\begin{notn}
	If $F: \mathcal{K} \to \mathcal{L}$ has relative dimension $m$, then we will denote the induced homomorphisms $\mathrm{z}^m_p(F)$ and $\mathrm{c}^m_p(F)$ from Theorem \ref{functoriality} by $\Cyc^{\Delta}_{p}(F)$ and $\Chow^{\Delta}_{p}(F)$, respectively.
\end{notn}

\begin{rem}
	Theorem \ref{functoriality} and Remark \ref{reldimrem} show that for all $p$, $\Cyc^{\Delta}_{p}(-)$ and $\Chow^{\Delta}_{p}(-)$ are functors from the category of essentially small tensor triangulated categories equipped with a dimension function to the category of abelian groups, with respect to the class of functors with a relative dimension.
\end{rem}

Let us finish the discussion with a general example of a functor with relative dimension~$0$.
\begin{prop}
	Let $a \in \mathcal{K}$ be an object such that $\dim(\supp(a)) \neq \pm \infty$. Then the functor 
	\[a \otimes - : \mathcal{K} \to \mathcal{K} \]
	has relative dimension~$0$.
	\label{proptensorreldimzero}
\end{prop}
\begin{proof}
	For any object $b \in \mathcal{K}$, we have 
	\[\supp(a \otimes b) = \supp(a) \cap \supp(b) \subset \supp(b)~,\]
	from which it follows that $\dim(\supp(a \otimes b)) \leq \dim(\supp(b))$. Thus $a \otimes -$ has relative dimension $\leq 0$. But $\supp(a \otimes a) = \supp(a)$ and therefore $\dim(\supp(a \otimes a)) = \dim(\supp(a))$, which shows that $a \otimes -$ leaves the dimension of support of the object $a$ fixed and finite. We conclude that $a \otimes -$ has relative dimension~$0$.
\end{proof}

\begin{rem}
	If $\dim(\supp(a)) = -\infty$, then $\supp(a \otimes b) \subset \supp(a)$ implies that $a \otimes \mathcal{K}_{(p)} \subset \mathcal{K}_{(-\infty)}$ for all $p \in \mathbb{Z}$. This shows that the functor $a \otimes -$ does not have a relative dimension in this case. If $\dim(\supp(a)) = \infty$ the situation seems to be more subtle. At least $\supp(a \otimes b) \subset \supp(b)$ implies that the relative dimension, if it exists, is bounded from above by~$0$. Here is an example where it doesn't exist: Take $\mathcal{K} = \Dperf(\mathbb{A}^1_k)$, then $\Spc(\mathcal{K}) = \mathbb{A}^1_k$ with generic point $\eta$. Pick a closed point $P \in \mathbb{A}^1_k$ and define a dimension function $\dim_P$ on $\mathbb{A}^1_k$ by setting $\dim_P(Q) = 0$ for $Q \neq P, \eta$ and $\dim_P(P) = \dim_P(\eta) = \infty$. If we look at the skyscraper sheaf $a:= k(P)$, then $\dim_P(\supp(a)) = \dim_P(P) = \infty$. Furthermore, $\mathcal{K}_{(p)}$ consists of all those bounded complexes of coherent sheaves on $\mathbb{A}^1_k$ whose homological support does not contain $P$ or $\eta$ for $p=0$ and is $\lbrace 0 \rbrace$ for $0 \neq p \in \mathbb{Z}$. Since $\supp(x \otimes y) = \supp(x) \cap \supp(y)$ for all objects $x,y \in \mathcal{K}$, we see that $a \otimes \mathcal{K}_{(p)} = \lbrace 0 \rbrace = \mathcal{K}_{(-\infty)}$ for all $p \in \mathbb{Z}$, which shows that the functor $a \otimes -$ does not admit a relative dimension in this case.
\end{rem}

\subsection{Projection formulas and relative dimension.}
For a pair of adjoint functors $(f_*,f^*)$ with relative dimensions $\dim(f_*)$ and $\dim(f^*)$, we can give a relation between $\dim(f_*)$ and $\dim(f^*)$ if they behave similarly as the derived direct image and inverse image functor in the derived projection formula from algebraic geometry (see e.g.\ \cite{huybrechts}*{p.\ 83}).

\begin{dfn}
	Let $\mathcal{C},\mathcal{D}$ be tensor triangulated categories as in Definition \ref{defttcat}, that are both equipped with a dimension function. Assume we are given an adjoint pair of exact functors $(f^*,f_*)$ between $\mathcal{C}$ and $\mathcal{D}$
	\[\xymatrix{
		\mathcal{C} \ar@/_/[d]_{f^*}  \\
		\mathcal{D} \ar@/_/[u]_{f_*}
	}\]
	where $f^*$ is also a tensor functor. We say that the pair $(f^*,f_*)$ \emph{satisfies the projection formula} if for all $D \in \mathcal{D},C \in \mathcal{C}$ there are isomorphisms
	\[C \otimes_{\mathcal{C}} f_*(D) \cong f_*(f^*(C) \otimes_{\mathcal{D}} D)\]
	which are natural in both variables.
	\label{defprojform}
\end{dfn}

\begin{rem}
	The situation of Definition \ref{defprojform} is not restricted to algebraic geometry, see e.g.\ Theorem \ref{thmresindprojform} or \cite{balmersplit}*{Proposition 1.2} for a more general result.
\end{rem}

\begin{lma}
	Let $(f^*,f_*)$ be a pair of functors between $\mathcal{C}$ and $\mathcal{D}$ that satisfies the projection formula. Assume that $\dim\left(\supp\left(f_*(\mathbb{I}_{\mathcal{D}})\right)\right) \neq \pm \infty$. Then the functor $f_* \circ f^*$ has relative dimension $0$.
	\label{lmaprojformreldim}
\end{lma}
\begin{proof}
	Using that $(f^*,f_*)$ satisfies the projection formula, we have an isomorphism 
	\[f_* \circ f^* (C) \cong f_*(\mathbb{I}_{\mathcal{D}}) \otimes C \]
	for all objects $C$ of $\mathcal{C}$. The result then follows by Proposition \ref{proptensorreldimzero}.
\end{proof}

\begin{cor}
	Let $(f^*,f_*)$ be a pair of functors between $\mathcal{C}$ and $\mathcal{D}$ that satisfies the projection formula and assume $\dim\left(\supp\left(f_*(\mathbb{I}_{\mathcal{D}})\right)\right) \neq \pm \infty$. Furthermore assume $f^*$ and $f_*$ have relative dimensions $\dim(f^*)$ and $\dim(f_*)$ respectively. Then
	\[\dim(f^*) + \dim(f_*) \geq 0\]
	\label{reldimsum}
\end{cor}
\begin{proof}
	This is an immediate consequence of the fact that 
	\[\dim(f_* \circ f^*) \leq \dim(f^*) + \dim(f_*)\] 
	(see Remark \ref{reldimrem}) and Lemma \ref{lmaprojformreldim}.
\end{proof}

Let us give two examples from algebraic geometry, which show that functors with a relative dimension occur naturally.
\subsection{Example: flat pullback.} 
We fix $X,Y$ separated schemes of finite type over a field and $f: X \to Y$ a faithfully flat morphism. We consider $\Dperf(X)$ and $\Dperf(Y)$ with the standard structure of tensor triangulated categories and assume that they are equipped with the opposite of the Krull codimension function~$-\codim_{\mathrm{Krull}}$. We recall the following helpful lemma:
\begin{lma}[see \cite{EGA4}*{Chap. 2, Corollaire 6.1.4}]
	For all closed subsets $Z \subset Y$, we have 
	\[\codim(Z) = \codim(f^{-1}(Z))~.\]
	\label{lmacodimpreimequal}
\end{lma}

\begin{lma}
	The functor $\mathrm{L}f^{*}: \Dperf(Y) \to \Dperf(X)$ has relative dimension~$0$.
\end{lma}
\begin{proof}
	We need to check that for every $A^{\bullet} \in \Dperf(Y)_{(p)}$, the complex $\mathrm{L}f^*(A^{\bullet})$ is contained in $\Dperf(X)_{(p)}$. Thus, assume that
	\[-\codim\left(\supp\left( \bigoplus_i \mathrm{H}^i(A^{\bullet}) \right)\right) = q \leq p~.\]
	As $f$ is flat, $f^*$ is exact, and so we have
	\[\bigoplus_i \mathrm{H}^i\left(\mathrm{L}f^*\left(A^{\bullet}\right)\right) = \bigoplus_i f^*\left(\mathrm{H}^i\left(A^{\bullet}\right)\right)\]
	This implies that
	\begin{align*}
	-\codim\left(\supp\left( \bigoplus_i \mathrm{H}^i(\mathrm{L}f^*(A^{\bullet})) \right)\right) &= -\codim\left(\bigcup_i \supp\left( f^*\left(\mathrm{H}^i (A^{\bullet}) \right) \right)\right)\\ 
	&= -\codim\left(f^{-1}\left( \bigcup_i \supp\left(\mathrm{H}^i (A^{\bullet}) \right) \right)\right)\\
	&= q \leq p
	\end{align*}
	where the last equality follows from Lemma \ref{lmacodimpreimequal}. Setting $q=p$ shows that $\mathrm{L}f^{*}(\Dperf(Y)_{(p)})$ is not contained in $\Dperf(X)_{(p-1)}$ and finishes the proof of the statement.
\end{proof}

Using Theorem \ref{functoriality}, we know now that $\mathrm{L}f^*$ induces morphisms between the tensor triangular cycle and Chow groups of $\Dperf(Y)$ and~$\Dperf(X)$. We want to show that in the non-singular case, these correspond to the flat pullback homomorphisms on Chow groups defined in \cite{fulton}. We say that a flat morphism $g: X \to Y$ \emph{has relative dimension $r$}, if for all closed subvarieties $V \subset Y$ and all irreducible components $V'$ of $g^{-1}(V)$ we have that $\dim(V') = \dim(V) + r$. For such a morphism $g$, Fulton defines in \cite{fulton}*{Chapter 1.7} a pullback  homomorphism 
\[g^{*}:\Chow_n(Y) \to \Chow_{n+r}(X)\] 
for $0 \leq n \leq \dim(Y)$ by sending $[V] \in \Chow_n(Y)$ to $[g^{-1}(V)] \in \Chow_{n+r}(X)$, the class of the scheme-theoretic inverse image of $V$ under $g$. 

We now assume that $f: X \to Y$ is faithfully flat of relative dimension $r$.
\begin{thm}
	Assume that $X,Y$ are non-singular and for $S=X,Y$, let 
	\[\rho_S: \Chow^{\Delta}_p(\Dperf(S)) \to \Chow^{-p}(S)\] 
	be the isomorphisms from Proposition \ref{propagreement}. Then for all $p$, there is a commutative diagram
	\[
	\xymatrix@C+30pt{
		\Chow^{\Delta}_p(\Dperf(Y)) \ar[r]^{\Chow^{\Delta}_p(\mathrm{L}f^*)} \ar[d]^{\rho_Y} & \Chow^{\Delta}_p(\Dperf(X)) \ar[d]^{\rho_X} \\
		\Chow^{-p}(Y) \ar[r]^{f^*} & \Chow^{-p}(X)
	}
	\]
	where $f^*$ denotes the flat pullback homomorphism on the usual Chow group. (cf. \cite{fulton}*{Chapter 1.7}).
	\label{pullbackagrees}
\end{thm}
\begin{proof}
	As both $f^*$ and $\Chow^{\Delta}_p(\mathrm{L}f^*)$ are induced by the corresponding morphisms on the cycle level, it is enough to check that the diagram
	\[
	\xymatrix@C+30pt{
		\Cyc^{\Delta}_p(\Dperf(Y)) \ar[r]^{\Cyc^{\Delta}_p(\mathrm{L}f^*)} \ar[d]^{\rho_Y}& \Cyc^{\Delta}_p(\Dperf(X)) \ar[d]^{\rho_X} \\
		Z^{-p}(Y) \ar[r]^{f^*} & Z^{-p}(X)
	}
	\]
	commutes. In order to do this, let $Z \subset Y$ be a subvariety (=reduced and irreducible subscheme) of $Y$ of codimension $-p$, with associated ideal sheaf $I_Z$ and cycle $[Z] \in Z^{-p}(Y)$. Consider the class $[W^{\bullet}]$ in
	\begin{align*}
	\Cyc^{\Delta}_p(\Dperf(Y)) &= \Kzero\left((\Dperf(Y)_{(p)}/\Dperf(Y)_{(p-1)})^{\natural}\right) \\
	&= \Kzero(\Dperf(Y)_{(p)}/\Dperf(Y)_{(p-1)})
	\end{align*}
	where $W^{\bullet}$ is the complex concentrated in degree zero with~$\mathrm{H}^0(W^{\bullet}) = \mathcal{O}_Y/I_Z =: \mathcal{O}_Z$. Then we have~$\rho_Y([W^{\bullet}]) = Z \in Z^{-p}(Y)$: indeed, using Proposition \ref{propagreement} we calculate
	\[\rho_Y([W^{\bullet}]) = \sum_i \sum_{P \in Y^{(-p)}} (-1)^i \length_{\mathcal{O}_{Y,P}} \left(\mathrm{H}^i(W^{\bullet})_P\right) \cdot \overline{\lbrace P \rbrace}\]
	where $\mathrm{H}^i(W^{\bullet})_P$ is the stalk of the $i$-th cohomology sheaf of the complex $W^{\bullet}$ at the point~$P$. Using that $W^{\bullet}$ is concentrated in degree zero and that $\length_{\mathcal{O}_{Y,P_Z}}(\mathcal{O}_{Z,P_Z})$ is equal to 1, where $P_Z$ is the generic point of $Z$, we see that~$\rho_Y([W^{\bullet}]) = Z$.
	
	Furthermore, using that $f$ is flat, we compute that $\Cyc^{\Delta}_p(\mathrm{L}f^*)\left([W^{\bullet}]\right) = [U^{\bullet}]$, where $U^{\bullet}$ is the complex of sheaves concentrated in degree zero with $\mathrm{H}^0(U^{\bullet}) = \mathcal{O}_X/I_{f^{-1}(Z)}$ and $f^{-1}(Z)$ denotes the scheme-theoretic inverse image of $Z$ under~$f$. Clearly we have  $\rho_X([U^{\bullet}]) = [f^{-1}(Z)]$, the cycle associated to the scheme-theoretic inverse image of $Z$, and so we conclude that 
	\[\rho_X \circ \Cyc^{\Delta}_p(\mathrm{L}f^*) \circ \rho_Y^{-1} ([Z]) = [f^{-1}(Z)] = f^*[Z]\]
	By additivity of the four maps in the diagram the theorem follows.
\end{proof}

\subsection{Example: proper push-forward.}
\label{example:properpfwd}
Let $X$ and $Y$ be integral, non-singular, separated schemes of finite type over an algebraically closed field. (The latter assumption will be needed in order to use \cite{serrealgloc}*{Proposition V.C.6.2}). Let $f: X \to Y$ denote a proper morphism. We consider $\Dperf(X),\Dperf(Y)$ with the standard structure of tensor triangulated categories, \emph{but this time we choose $\dim_{\mathrm{Krull}}$ as a dimension function}. Note that this implies $\Chow^{\Delta}_p(\Dperf(S)) \cong \Chow^{\dim(S) - p}(S)$ for~$S = X,Y$.

As $f$ is proper, we obtain a functor $\mathrm{R}f_*: \Db(\mathrm{Coh}(X)) \to \Db(\mathrm{Coh}(Y))$ (see e.g.\ \cite{huybrechts}*{Theorem 3.23}), and so our regularity assumptions on $X$ and $Y$ imply that we also get a functor $\mathrm{R}f_*:\Dperf(X) \to \Dperf(Y)$. 

\begin{lma}
	The functor $\mathrm{R}f_*:\Dperf(X) \to \Dperf(Y)$ has relative dimension~$0$.
\end{lma}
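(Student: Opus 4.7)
The plan is to verify two things: first, that $\mathrm{R}f_*$ actually restricts to a functor $\Dperf(X) \to \Dperf(Y)$, and second, that $\mathrm{R}f_*(\Dperf(X)_{(p)}) \subset \Dperf(Y)_{(p)}$ for every $p$, with $0$ being the smallest such integer. Well-definedness is standard: since $f$ is proper and $X$ is noetherian, the proper direct image theorem together with the finite cohomological dimension of $f$ gives $\mathrm{R}f_* : \Db(\mathrm{Coh}(X)) \to \Db(\mathrm{Coh}(Y))$, and the non-singularity of $X$ and $Y$ identifies these categories with $\Dperf(X)$ and $\Dperf(Y)$ respectively.

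For the dimension estimate, let $A^{\bullet} \in \Dperf(X)_{(p)}$. I would use the hypercohomology spectral sequence
\[E_2^{i,j} = \mathrm{R}^i f_* \mathrm{H}^j(A^{\bullet}) \Longrightarrow \mathrm{H}^{i+j}(\mathrm{R}f_* A^{\bullet})\]
to reduce to the analogous claim for a single coherent sheaf $\mathcal{F}$, namely $\supp \mathrm{R}^i f_* \mathcal{F} \subset f(\supp \mathcal{F})$. The latter follows from a stalk computation: if $y \notin f(\supp \mathcal{F})$, the fibre $f^{-1}(y)$ is disjoint from $\supp \mathcal{F}$, so by the theorem on formal functions (or a direct stalk computation on an $f$-saturated open neighbourhood) one obtains $(\mathrm{R}^i f_* \mathcal{F})_y = 0$. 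Since $f$ is proper, $f(\supp \mathcal{F})$ is closed and the restriction $f|_{\supp \mathcal{F}}$ is a surjective proper morphism of noetherian schemes, so $\dim f(\supp \mathcal{F}) \leq \dim \supp \mathcal{F}$. Combining these, every cohomology sheaf of $\mathrm{R}f_* A^{\bullet}$ has support of dimension at most $p$, i.e.\ $\mathrm{R}f_* A^{\bullet} \in \Dperf(Y)_{(p)}$.

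Sharpness of $n=0$ is easy: pick any closed point $x \in X$; the skyscraper $k(x)$ lies in $\Dperf(X)_{(0)}$, and $\mathrm{R}f_* k(x) = f_* k(x)$ is a nonzero skyscraper supported at the closed point $f(x) \in Y$, so it belongs to $\Dperf(Y)_{(0)} \setminus \Dperf(Y)_{(-1)}$. Hence no negative $n$ works and the relative dimension is exactly $0$. The only non-routine point I anticipate is the support inclusion $\supp \mathrm{R}^i f_* \mathcal{F} \subset f(\supp \mathcal{F})$, which is presumably where the reference \cite{serrealgloc} invoked in the surrounding subsection most naturally enters.
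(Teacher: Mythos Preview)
Your proof is correct and follows essentially the same route as the paper: both use the hypercohomology spectral sequence $E_2^{i,j} = \mathrm{R}^i f_* \mathrm{H}^j(A^{\bullet}) \Rightarrow \mathrm{H}^{i+j}(\mathrm{R}f_* A^{\bullet})$ to reduce to a single sheaf, invoke the support/dimension estimate for $\mathrm{R}^i f_*$ of a coherent sheaf (which the paper simply cites as \cite{serrealgloc}*{Proposition V.C.6.2~(a)} while you sketch it), and establish sharpness via the skyscraper at a closed point. Your treatment of the sharpness step is in fact slightly more direct than the paper's, which re-uses the spectral sequence to argue $E_2^{0,0}=E_\infty^{0,0}$ rather than observing outright that $\mathrm{R}f_* k(x)=f_* k(x)$.
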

\begin{proof}
	Let $A^{\bullet}$ be a complex in $\Dperf(X)$ such that~$\dim\left(\supp\left( \bigoplus_i \mathrm{H}^i(A^{\bullet}) \right)\right) \leq d$. There is a spectral sequence 
	\[E_2^{p,q} = \mathrm{R}^pf_*(\mathrm{H}^q(A^{\bullet})) \Longrightarrow \mathrm{H}^{p+q}(\mathrm{R}f_*(A^{\bullet}))\]
	(see for example \cite{huybrechts}*{p.74 (3.4)}) that converges, as $A^{\bullet}$ is bounded. By assumption, all the cohomology sheaves $\mathrm{H}^q(A^{\bullet})$ are supported in dimension $\leq d$, and by \cite{serrealgloc}*{Proposition V.C.6.2 (a)}, we therefore have $\dim(\supp(\mathrm{R}^pf_*(\mathrm{H}^q(A^{\bullet})))) \leq d$ for all~$p$. This implies that the terms $E^{p,q}_{\infty}$ are supported in dimension $\leq d$ as well. Therefore, all objects $\mathrm{H}^{p+q}(\mathrm{R}f_*(A^{\bullet}))$ admit a finite filtration such that the subquotients are supported in dimension~$\leq d$. An induction argument then shows that the same must hold for $\mathrm{H}^{p+q}(\mathrm{R}f_*(A^{\bullet}))$. We conclude that 
	\[\dim\left(\supp\left(\bigoplus_i \mathrm{H}^i(\mathrm{R}f_*(A^{\bullet})) \right)\right) \leq d\]
	which shows that $\mathrm{R}f_*(A^{\bullet}) \in \Dperf(Y)_{(d)}$. In order to show that the relative dimension of $\mathrm{R}f_*$ is $0$, we need to show that there is a $B^{\bullet} \in \Dperf(X)$ such that~$\dim(\supp(B^{\bullet})) = \dim(\supp(\mathrm{R}f_*(B^{\bullet})))$. If $P$ is any closed point of $X$ with associated ideal sheaf $I_{P}$, then the complex $C_P^{\bullet}$ concentrated in degree 0 with $\mathcal{O}_X/I_P$ has~$\dim(\supp(C_P^{\bullet})) = 0$. By the result we just proved, $\mathrm{R}f_*(C_P^{\bullet}) \in \Dperf(Y)_{(0)}$, which implies that either $\mathrm{R}f_*(C_P^{\bullet}) = 0$ or~$\dim(\supp(\mathrm{R}f_*(C_P^{\bullet}))) = 0$. If $\mathrm{R}f_*(C_P^{\bullet}) = 0$, we would certainly have $\mathrm{H}^0(\mathrm{R}f_*(C_P^{\bullet})) = 0$, but this is impossible by the spectral sequence we used above: indeed, it is easy to see that $E^{0,0}_{\infty} = E^{0,0}_2$ as $\mathrm{H}^i(C_P^{\bullet}) = 0$ for~$i \neq 0$. But we have~$E^{0,0}_2 = \mathrm{R}^0f_*(C_P^{\bullet}) = f_*(\mathcal{O}_X/I_P) \neq 0$. Thus $\mathrm{H}^0(\mathrm{R}f_*(C_P^{\bullet}))$ has a non-zero subquotient from which we deduce that~$\mathrm{R}f_*(C_P^{\bullet}) \neq 0$. We conclude that $\dim(\supp(\mathrm{R}f_*(C_P^{\bullet}))) = 0$ which completes the proof.
\end{proof}

The previous lemma establishes that $\mathrm{R}f_*$ induces homomorphisms 
\[\Chow^{\Delta}_{p}(\Dperf(X)) \to \Chow^{\Delta}_{p}(\Dperf(Y))\] 
for all~$p$. Again, we can show that these are exactly the ones we would expect.
\begin{thm}
	Denote by $\rho_S: \Chow^{\Delta}_p(\Dperf(S)) \to \Chow^{\dim(S) - p}(S)$ for $S=X,Y$ the isomorphisms from Proposition \ref{propagreement}. Then for all $p$, there is a commutative diagram
	\[
	\xymatrix@C+30pt{
		\Chow^{\Delta}_p(\Dperf(X)) \ar[r]^{\Chow^{\Delta}_p(\mathrm{R}f_{*})} \ar[d]^{\rho_X} & \Chow^{\Delta}_p(\Dperf(Y)) \ar[d]^{\rho_Y} \\
		\Chow^{\dim(X) - p}(X) \ar[r]^{f_*} & \Chow^{\dim(Y)-p}(Y)
	}
	\]
	where $f_*$ denotes the proper push-forward homomorphism on the usual Chow group (cf. \cite{fulton}*{Chapter 1.4}).
	\label{proppushforwardagrees}
\end{thm}
\begin{proof}
	Again, it suffices to show the statement for the maps on the cycle groups, as the maps on the Chow groups are induced by those. By additivity of the four maps in the diagram it is enough to check that for an (integral) subvariety $V \subset X$ of dimension $p$ and an element $v \in Z_p^{\Delta}(\Db(X))$ with $\rho_X(v) = [V]$, we have~$\rho_Y \circ \Cyc^{\Delta}_p(\mathrm{R}f_{*}) (v) = f_*([V])$. So, fix $V$ as above and consider the complex of coherent sheaves $W^{\bullet}$ that is concentrated in degree 0 and has $\mathrm{H}^0(W^{\bullet}) = \mathcal{O}_V$, where $\mathcal{O}_V = \mathcal{O}_X/\mathcal{I}_V$ and $\mathcal{I}_V$ is the ideal sheaf associated to~$V$. The complex $W^{\bullet}$ represents a class $[W^{\bullet}]$ in 
	\[\Cyc_p^{\Delta}(\Db(X)) = \Kzero\left(\Db(X)_{(p)}/\Db(X)_{(p-1)}\right) = \Kzero\left(\left(\Db(X)_{(p)}/\Db(X)_{(p-1)}\right)^{\natural}\right)\] 
	and similarly to the calculation in Theorem \ref{pullbackagrees} we see that~$\rho_X([W^{\bullet}]) = V$.
	
	For the next step, we compute
	\begin{align*}
	\rho_Y \circ \Cyc^{\Delta}_p(\mathrm{R}f_*) ([W^{\bullet}]) &= \sum_i \sum_{Q \in Y_{(p)}} (-1)^i \length_{\mathcal{O}_{Y,Q}} \left(\mathrm{H}^i(\mathrm{R}f_*(W^{\bullet}))_Q\right) \cdot \overline{\lbrace Q \rbrace} \\ 
	&= \sum_i \sum_{Q \in Y_{(p)}} (-1)^i \length_{\mathcal{O}_{Y,Q}} \left(\mathrm{R}^if_*(\mathcal{O}_V)_{Q}\right) \cdot \overline{\lbrace Q \rbrace} \\
	&= \sum_i (-1)^i \sum_{Q \in Y_{(p)}} \length_{\mathcal{O}_{Y,Q}} \left(\mathrm{R}^if_*(\mathcal{O}_V)_Q\right) \cdot \overline{\lbrace Q \rbrace} .
	\end{align*}
	Using \cite{serrealgloc}*{Proposition V.C.6.2 (b)}, we see that this is equal to $f_*([V])$, which means that we have shown $\rho_Y \circ \Cyc^{\Delta}_p(Rf_*) ([W^{\bullet}]) = f_*([V])$ and thus have finished the proof of the theorem.
\end{proof}

\section{An alternative definition of rational equivalence}
\label{sectaltrat}
\subsection{Divisors and tensor triangular Chow groups}
Instead of choosing the $\mathrm{K}$-theoretic approach of Definition \ref{defchow} in order to obtain a notion of rational equivalence, one can try to imitate the original construction from algebraic geometry of taking divisors of functions on subvarieties. Following \cite{balmerchow}, we can define ``divisors of functions'' in the categorical context.

\begin{conv}
	For the rest of the section, $\mathcal{K}$ denotes a tensor triangulated category that is rigid and such that $\Spc(\mathcal{K})$ is a noetherian topological space. We also fix a dimension function on $\mathcal{K}$.
\end{conv}

Let $Q^{\natural}$ denote the composition of the Verdier quotient functor 
\[\mathcal{K}_{(p)} \to \mathcal{K}_{(p)}/\mathcal{K}_{(p-1)}\] 
and the inclusion into the idempotent completion
\[\mathcal{K}_{(p)}/\mathcal{K}_{(p-1)} \to (\mathcal{K}_{(p)}/\mathcal{K}_{(p-1)})^{\natural}~.\]
The functor $Q^{\natural}$ induces a group homomorphism 
\begin{align*}
q^{\natural}: \Kzero(\mathcal{K}_{(p)}) &\to \Kzero\left((\mathcal{K}_{(p)}/\mathcal{K}_{(p-1)})^{\natural}\right) = \Cyc^{\Delta}_p(\mathcal{K}) \\
[a] &\mapsto [Q^{\natural}(a)]
\end{align*}

For an object $a$ in the Verdier quotient $\mathcal{K}_{(p+1)}/\mathcal{K}_{(p)}$ and an automorphism 
\[f:a \to a~,\]
choose a fraction $a \xleftarrow{\beta} b \xrightarrow{\alpha} a$ in $\mathcal{K}_{(p+1)}$ representing~$f$. We will then have $\cone(\beta) \in \mathcal{K}_{(p)}$ by definition of the Verdier quotient. We also must have  $\cone(\alpha) \in \mathcal{K}_{(p)}$: indeed, $\alpha$ must be an isomorphism in $\mathcal{K}_{(p+1)}/\mathcal{K}_{(p)}$ as the composition $\alpha \circ \beta^{-1} = f$ is one, and thus its cone must be zero in $\mathcal{K}_{(p+1)}/\mathcal{K}_{(p)}$. This implies that it is in $\mathcal{K}_{(p)}$, as the latter is a thick subcategory.

We then define the \emph{divisor of $f$} (cf. \cite{balmerchow}) as
\[\divis^{\Delta}(f) := q^{\natural}([\cone(\alpha)] - [\cone(\beta)]) = [Q^{\natural}(\cone(\alpha))] - [Q^{\natural}(\cone(\beta))] \]
The following shows that $\divis^{\Delta}(f)$ is well-defined.
\begin{prop}
	The expression $\divis^{\Delta}(f)$ does not depend on the choice of $\alpha$ and~$\beta$.
\end{prop}
\begin{proof}
	If we have an equivalent fraction $a \xleftarrow{\beta'} c \xrightarrow{\alpha'} a$, there is by definition a commutative diagram in $\mathcal{K}_{(p+1)}$
	\[
	\xymatrix{
		& b \ar[ld]_{\beta} \ar[rd]^{\alpha}& \\
		a & d \ar[r]^{x} \ar[l]^{y} \ar[u]^{f} \ar[d]^{g} & a  \\
		& c \ar[ul]^{\beta'} \ar[ur]_{\alpha'} &.
	}
	\]
	Using the octahedral axiom, we obtain the following distinguished triangles in  $\mathcal{K}_{(p)}$:
	\begin{align*}
	\cone(f) \to \cone(y) \to \cone(\beta) &\to \Sigma(\cone(f)) \\
	\cone(g) \to \cone(y) \to \cone(\beta') &\to \Sigma(\cone(g)) \\
	\cone(f) \to \cone(x) \to \cone(\alpha) &\to \Sigma(\cone(f)) \\
	\cone(g) \to \cone(x) \to \cone(\alpha') &\to \Sigma(\cone(g))
	\end{align*}
	These show that $[\cone(\alpha)] - [\cone(\alpha')]$ and $[\cone(\beta)] - [\cone(\beta')]$ are both equal to the element $[\cone(g)] - [\cone(f)]$ in $\Kzero(\mathcal{K}_{(p)})$. Thus, we have 
	\[[\cone(\alpha)] - [\cone(\beta)] = [\cone(\alpha')] - [\cone(\beta')]\]
	in $\Kzero(\mathcal{K}_{(p)})$. Applying the homomorphism $q^{\natural}$ on both sides of the equation yields the statement.
\end{proof}

We now define alternative Chow groups as ``cycles modulo divisors of functions''.
\begin{dfn} Let $\mathfrak{I}$ denote the subgroup of $\Cyc^{\Delta}_p(\mathcal{K})$ generated by all expressions~$\divis^{\Delta}(f)$, where $f$ runs over all automorphisms of all objects of $\mathcal{K}_{(p+1)}/\mathcal{K}_{(p)}$. Then define
	\[\chow^{\Delta}_p(\mathcal{K}) := \Cyc^{\Delta}_p(\mathcal{K})/\mathfrak{I}~.\]
	\label{dfnaltchow}
\end{dfn}

Let us now investigate the relation between $\chow^{\Delta}_p(\mathcal{K})$ and $\Chow^{\Delta}_p(\mathcal{K})$. Recall from Definition \ref{defchow} that $\Chow^{\Delta}_p(\mathcal{K}) = \Cyc^{\Delta}_p(\mathcal{K})/j \circ q (\ker(i))$ where $i,q,j$ are taken from the diagram
\[
\xymatrix{
	\Kzero(\mathcal{K}_{(p)}) \ar[r]^i \ar@{->>}[d]^q& \Kzero(\mathcal{K}_{(p+1)}) \\
	\Kzero(\mathcal{K}_{(p)}/\mathcal{K}_{(p-1)}) \ar@{^{(}->}[r]^(0.38){j} & \Kzero\left( (\mathcal{K}_{(p)}/\mathcal{K}_{(p-1)})^{\natural} \right) = \Cyc^{\Delta}_{p}(\mathcal{K}) ~.
}
\]
\begin{prop}
	We have an inclusion $\mathfrak{I} \subset j \circ q (\ker(i))$.
	\label{propaltchowinclusion}
\end{prop}
\begin{proof}
	If $f:a \to a$ is an isomorphism in $\mathcal{K}_{(p+1)}/\mathcal{K}_{(p)}$ represented by a fraction 
	\[a \xleftarrow{\beta} b \xrightarrow{\alpha} a\]
	in $\mathcal{K}_{(p+1)}$, then in $\Kzero(\mathcal{K}_{(p+1)})$, we have 
	\[[\cone(\alpha)] = [\cone(\beta)] = [b] - [a]\]
	and thus $[\cone(\alpha)] - [\cone(\beta)] = 0$. Therefore, $[\cone(\alpha)] - [\cone(\beta)]$ will certainly be in $\ker(i)$. The statement then follows as $q^{\natural} = j \circ q$.
\end{proof}

\begin{cor}
	For all $p \in \mathbb{Z}$, there is an epimorphism
	\[\chow^{\Delta}_p(\mathcal{K}) \to \Chow^{\Delta}_p(\mathcal{K})~.\]
\end{cor}
\begin{proof}
	This is an immediate consequence of Proposition \ref{propaltchowinclusion}.
\end{proof}

It is not clear to the author if the inclusion $\mathfrak{I} \supset j \circ q (\ker(i))$ holds in general, so $\chow^{\Delta}_p(\mathcal{K})$ and $\Chow^{\Delta}_p(\mathcal{K})$ are a priori different. 

\subsection{Varieties with an ample line bundle}
We will now prove that both groups $\chow^{\Delta}_p(\Dperf(X))$ and $\Chow^{\Delta}_p(\Dperf(X))$ coincide with $\Chow^{-p}(X)$ when $X$ is separated, non-singular of finite type over a field and has an ample line bundle.
\begin{thm}
	Let $X,\Dperf(X)$ be as in Convention \ref{convschemeass} and assume furthermore that $X$ has an ample line bundle~$\mathcal{L}$. Then there are isomorphisms
	\[\chow^{\Delta}_p(\Dperf(X)) \cong \Chow^{\Delta}_p(\Dperf(X)) \cong \Chow^{-p}(X)\]
	for all $p \in \mathbb{Z}$.
	\label{altchowagree}
\end{thm}
\begin{proof}
	Using Theorem \ref{agreementthm} and Proposition \ref{propaltchowinclusion}, we already know that the subgroup $\mathfrak{I}$ is contained in the subgroup of cycles rationally equivalent to zero. Thus, it suffices to show that any cycle rationally equivalent to zero can be obtained as $\divis^{\Delta}(f)$ for some object~$a \in \Db(X)_{(p+1)}/\Db(X)_{(p)}$ and morphism $f\in \Aut(a)$. The essential point is that for a subvariety $V \subset X$ of codimension $-(p+1)$ we can write the function field of $V$ as
	\[k(V) = \left(\bigoplus_{i \geq 0} \Gamma\left(X, \mathcal{O}_V \otimes \mathcal{L}^{\otimes i}\right)\right)_{((0))}~,\]
	where $\mathcal{O}_V := \mathcal{O}_X/\mathcal{I}_V$ and $\mathcal{I}_V$ is the ideal sheaf associated to~$V$. Indeed, this is a consequence of \cite{ega2}*{Th\'eor\`eme 4.5.2} and the fact that the restriction of an ample line bundle to a closed subscheme is ample.
	
	Thus, for $h \in k(V)$, we can write $h = f/g$ with $f,g \in \Gamma(X, \mathcal{O}_V \otimes \mathcal{L}^{\otimes n})$ for some~$n \in \mathbb{N}$. From this, we obtain exact sequences
	\begin{equation}
		0 \to \mathcal{O}_V \xrightarrow{m_f} \mathcal{O}_V \otimes \mathcal{L}^{\otimes n} \to \coker(m_f) \to 0
		\label{eqsuppval1}
	\end{equation}
	and
	\begin{equation}
		0 \to \mathcal{O}_V \xrightarrow{m_g} \mathcal{O}_V \otimes \mathcal{L}^{\otimes n} \to \coker(m_g) \to 0
		\label{eqsuppval2}
	\end{equation}
	where $m_f, m_g$ are the obvious multiplication maps. By using the local isomorphisms $\mathcal{L}^{\otimes n}|_{U_i} \cong \mathcal{O}_{X}|_{U_i}$ for some open cover $\lbrace U_i \rbrace_{i \in I}$, we obtain that 
	\[\supp(\coker(m_f)) = V(f) \subset V\]
	and
	\[\supp(\coker(m_g)) = V(g) \subset V~.\]
	We have
	\[\codim(V(f))=\codim(V(g)) = -p\]
	which one deduces from looking at the local rings of the generic points of the irreducible components of $V(f),V(g)$ and applying Krull's principal ideal theorem. If we interpret the exact sequences (\ref{eqsuppval1}) and (\ref{eqsuppval2}) as distinguished triangles in $\Dperf(X)_{(p+1)}$, this shows that $\cone(m_f),\cone(m_g) \in \Dperf(X)_{(p)}$. We can use them to form a fraction and obtain an automorphism
	\[\hat{h}: \mathcal{O}_V \otimes \mathcal{L}^{\otimes n} \xleftarrow{m_g} \mathcal{O}_V \xrightarrow{m_f} \mathcal{O}_V \otimes \mathcal{L}^{\otimes n} \]
	in the quotient category $\Dperf(X)_{(p+1)}/\Dperf(X)_{(p)}$. Using the explicit formula from Proposition~\ref{propagreement} one obtains
	\begin{align*}
		\rho_X\left(\divis^{\Delta}(\hat{h})\right) &= \sum_i \sum_{x \in X^{(-p)}} (-1)^i \left(\length_{\mathcal{O}_{X,x}}\left(\mathrm{H}^i\left(\cone(m_f)\right)_x\right) - \length_{\mathcal{O}_{X,x}}\left(\mathrm{H}^i\left(\cone(m_g)\right)_x\right)\right) \cdot \overline{\lbrace x \rbrace}\\
		&= \sum_{x \in X^{(-p)}} \left(\length_{\mathcal{O}_{X,x}}\left(\coker(m_f)_x\right) - \length_{\mathcal{O}_{X,x}}\left(\coker(m_g)_x\right)\right) \cdot \overline{\lbrace x \rbrace}~,
	\end{align*}
	which is equal to $\divis(h)$ by definition and therefore proves the statement.
\end{proof}

\subsection{Functoriality}
The groups $\chow^{\Delta}_p$ possess the same functoriality properties as $\Chow^{\Delta}_p$. 
\begin{prop}
	A functor $F: \mathcal{K} \to \mathcal{L}$ of relative dimension $n$ induces homomorphisms 
	\[\chow^{\Delta}_{p}(F): \chow^{\Delta}_{p}(\mathcal{K}) \to \chow^{\Delta}_{p+n}(\mathcal{L})\] 
	for all~$p$.
\end{prop}
\begin{proof}
	We have already seen in Theorem \ref{functoriality} how $F$ induces a morphism of cycle groups, so it remains to check that $F$ sends divisors to divisors. In order to do this, let $D = \divis(h)$ be the divisor of an automorphism $h: a \to a$ in $\mathcal{K}_{(p+1)}/\mathcal{K}_{(p)}$. Assume that a corresponding fraction for $h$ is given by 
	\[a \xleftarrow{g} b \xrightarrow{f} a,\] 
	where $\cone(f),\cone(g) \in \mathcal{K}_{(p)}$.
	
	As $F$ has relative dimension $n$, we have $F(a) \in \mathcal{L}_{(p+n+1)}$ and 
	\[\cone(F(f)), \cone(F(g)) \in \mathcal{L}_{(p+n)}~,\]
	so there is an automorphism $\hat{h}$ of $F(a)$ in $\mathcal{L}_{(p+n+1)}/\mathcal{L}_{(p+n)}$ given by the fraction 
	\[F(a) \xleftarrow{F(g)} F(b) \xrightarrow{F(f)} F(a)\]
	and it makes sense to define $E := \divis (\hat{h})$. To see that $\Cyc^{\Delta}_p(F)$ sends $D$ to $E$, simply note that $D$ is given as the element 
	\[[\cone(f)] - [\cone(g)] \in \Kzero(\mathcal{K}_{(p)}/\mathcal{K}_{(p-1)}) \subset \Kzero\left((\mathcal{K}_{(p)}/\mathcal{K}_{(p-1)})^{\natural} \right) = \Cyc^{\Delta}_p(\mathcal{K})\]
	and that
	\begin{align*}
	\Cyc^{\Delta}_p(F)([\cone(f)] - [\cone(g)]) &= [F(\cone(f))] - [F(\cone(g))] \\
	&= [\text{cone}(F(f))] - [\cone(F(g))] = E~.
	\end{align*}
\end{proof}

\section[Tensor triangular Chow groups in modular representation theory]{Tensor triangular cycle groups and Chow groups in modular representation theory}
Up to this point we have almost exclusively considered examples from algebraic geometry. However, tensor triangulated categories also occur in different contexts. One of these is modular representation theory, where one studies $kG$-modules for a finite group $G$ and a field $k$ such that $\text{char}(k)$ divides~$|G|$. A useful tool in this context is the stable category $kG\stab$, which is obtained as the stable category of $kG\Mod$, the Frobenius category of finitely generated left $kG$-modules. The category $kG\stab$ is a tensor triangulated category. By a theorem of Rickard (see \cite{rickardstable}), it is closely related to $\Db(kG\Mod)$, the bounded derived category of finitely generated $kG$-modules, which is also tensor triangulated. Using the theory from the previous sections, we therefore have a notion of tensor triangular Chow groups for these categories. In this section we compare the tensor triangular Chow groups of $kG\stab$ and $\Db(kG\Mod)$, compute concrete examples of these groups and show that stable induction and restriction functors fit in the framework of functors with a relative dimension.

\subsection{Basic definitions and results}
We recall some basic definitions and results that we will need. All of them can be found in the books by Carlson \cite{carlson} and Benson \cites{bensonrep1,bensonrep2} or in Balmer's article \cite{balmer2005spectrum}. For the rest of the chapter, $G$ will denote a finite group, $k$ is a field of characteristic $p$ dividing $|G|$, and $kG$ is the corresponding group algebra. Associated to this algebra is the abelian category $kG\Mod$ consisting of the finitely-generated left $kG$-modules. Given two modules $M,N \in kG\Mod$, we can form their tensor product $M \otimes_k N$, which is again a finitely-generated left $kG$-module when we consider it with the diagonal action 
\[g(m \otimes n) := gm \otimes gn\] 
for $g \in G, m \in M$ and $n \in N$ and extend linearly. Furthermore, $\Hom_k(M,N)$, the set of $k$-linear maps from $M$ to $N$ can be made a finitely generated $kG$-module by setting
\[(gf)(m) := f(g^{-1}m)\]
for $g \in G, m \in M$ and $n \in N$ and extending linearly.

The category $kG\Mod$ is a Frobenius category, and so we can form the associated stable category $kG\stab$ which is naturally triangulated. It can be given a symmetric monoidal structure with the tensor product induced by $- \otimes_k -$ with unit object $k$, the trivial $kG$-module. Thus, $kG\stab$ is an essentially small, tensor triangulated category. It is also rigid, where the dual of an object $M$ is given as~$\Hom_k(M,k)$. 
\begin{dfn}
	The \emph{cohomology ring} of $kG$ is defined as the graded ring
	\[\mathrm{H}^*(G,k) := \bigoplus_{i \geq 0} \Ext^i_{kG}(k,k)~.\]
	The \emph{projective support variety} of $kG$ is defined as 
	\[\mathcal{V}_G(k) := \Proj(\mathrm{H}^*(G,k)) ~.\]
\end{dfn}
\begin{rem}
	When $p$ is odd, $\mathrm{H}^*(G,k)$ is in general only a \emph{graded} commutative ring, so when we write $\Proj(\mathrm{H}^*(G,k))$ we really mean $\Proj(\mathrm{H}^{\text{ev}}(G,k))$ in this case, where $\mathrm{H}^{\text{ev}}(G,k)$ is the subring of all elements of even degree. Another way to deal with this difficulty is to extend the definition of $\Proj$ to graded-commutative $k$-algebras (cf.\ \cite{balbencarl}*{Section 1}).
\end{rem}

Suppose we are given any two finite-dimensional $kG$-modules $M,N$. Then the Evens-Venkov theorem (see \cite{carlson}*{Theorem 9.1}) shows that $\bigoplus_{i \geq 0} \Ext^i_{kG}(M,N)$ is a fi\-nite\-ly generated graded module over $\mathrm{H}^*(G,k)$.
\begin{dfn}
	For a $kG$-module $M \neq 0$, define $\mathrm{J}(M) \subset \mathrm{H}^*(G,k)$ as the annihilator ideal of $\Ext^*_{kG}(M,M)$ in~$\mathrm{H}^*(G,k)$. The \emph{variety of $M$} is the subvariety of $\mathcal{V}_G(k)$ associated to~$\mathrm{J}(M)$.
	\label{dfncohsuppmod}
\end{dfn}

\begin{dfn}
	Let $M$ be in $kG\Mod$. A \emph{minimal projective resolution of $M$} is a projective resolution $P_{\bullet} \to M$ such that for every other projective resolution $Q_{\bullet} \to M$ there exists an injective chain map $(P_{\bullet} \to M) \to (Q_{\bullet} \to M)$ and a surjective chain map $(Q_{\bullet} \to M) \to (P_{\bullet} \to M)$ that both lift the identity on $M$.
\end{dfn}

\begin{thm}[see \cite{carlson}*{Theorem 4.3}]
	Let $M$ be a module in $kG\Mod$. Then $M$ has a minimal projective resolution.
\end{thm}

\begin{dfn}
	Let $M$ be in $kG\Mod$ and let $P_{\bullet} \to M$ be a minimal projective resolution. The \emph{complexity} $\mathrm{c}_G(M)$ of $M$ is defined as the least integer $s$ such that there is a constant $\kappa >0$ with
	\[\dim_k(P_n) \leq \kappa \cdot n^{s-1} \quad \text{for $n>0$} \]
\end{dfn}

The complexity of a module can be read off from its variety:
\begin{thm}[cf. \cite{bensonrep2}*{Prop.\ 5.7.2}]
	If $M$ is a finitely generated $kG$-module, then 
	\[\dim(\mathcal{V}_G(M)) = \mathrm{c}_G(M)-1.\]
\end{thm}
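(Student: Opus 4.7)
The plan is to translate the statement into commutative algebra via the $H^*(G,k)$-module structure on $\Ext$, and then invoke the standard Hilbert-polynomial dictionary between Krull dimension and rate of polynomial growth for finitely generated graded modules. Throughout I will write $H^* := \mathrm{H}^*(G,k)$ and $R := kG$ for brevity.

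First I would relate $\mathrm{c}_G(M)$ to the rate of polynomial growth of $\Ext$. Let $P^\bullet \twoheadrightarrow M$ be the minimal projective resolution. Minimality means that each differential $P^{n+1} \to P^n$ has image inside $\mathrm{rad}(R)\cdot P^n$; consequently the induced differentials in $\Hom_R(P^\bullet, k)$ are all zero, so $\Ext^n_R(M,k) \cong \Hom_R(P^n,k)$. Since every projective indecomposable $kG$-module has dimension bounded between two positive constants times $\dim_k(\text{its top})$, the two sequences $\dim_k P^n$ and $\dim_k \Ext^n_R(M,k)$ differ by at most a uniform multiplicative constant. Hence $\mathrm{c}_G(M)$ equals the least $s$ for which $\dim_k \Ext^n_R(M,k) = O(n^{s-1})$.

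Second, I would identify $\dim \mathcal{V}_G(M)$ with a Krull dimension on the Ext side. By the Evens--Venkov theorem $\Ext^*_R(M,N)$ is a finitely generated graded $H^*$-module for all finitely generated $M,N$; in particular this holds for $N=k$ and $N=M$. A standard argument (using that $k$ sits in the thick tensor-ideal generated by $M\otimes_k M^\vee$, or equivalently by duality and the fact that $M$ is a direct summand of $M\otimes_k M^\vee \otimes_k M$ in $kG\stab$ up to projectives) shows that the annihilator of $\Ext^*_R(M,k)$ has the same radical as $\mathrm{J}(M)$, so the support of $\Ext^*_R(M,k)$ in $\Proj(H^*)$ is exactly $\mathcal{V}_G(M)$. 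Thus $\dim \mathcal{V}_G(M) + 1$ equals the Krull dimension of $H^*/\sqrt{\mathrm{Ann}_{H^*}(\Ext^*_R(M,k))}$.

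Finally, I would invoke the graded Hilbert--Serre theorem: if $V^*$ is a finitely generated graded module over the Noetherian graded ring $H^*$ (which is generated in positive degree over the field $k$, after passing to $H^{\text{ev}}$ in odd characteristic), then $\dim_k V^n$ agrees with a polynomial of degree $d-1$ for $n\gg 0$, where $d$ is the Krull dimension of $V^*$ over $H^*$ (i.e., of $H^*/\mathrm{Ann}(V^*)$). Applied to $V^* = \Ext^*_R(M,k)$ this gives $\mathrm{c}_G(M) = d = \dim \mathcal{V}_G(M) + 1$, which is the desired equality. The main obstacle is the middle step: cleanly identifying the supports of $\Ext^*_R(M,k)$ and $\Ext^*_R(M,M)$ in $\Proj(H^*)$, since \emph{a priori} only the latter enters the definition of $\mathcal{V}_G(M)$; once this is established, both Step~1 (minimality) and Step~3 (Hilbert--Serre) are formal.
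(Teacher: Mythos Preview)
The paper does not prove this statement at all; it is quoted from Benson's book with a bare citation and no argument. So there is nothing on the paper's side to compare your proposal against.

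Your outline is the standard route (and is essentially Benson's proof). One point deserves correction. In Step~1 you claim that the bound ``every projective indecomposable has $k$-dimension comparable to that of its top'' forces $\dim_k P^n$ and $\dim_k \Ext^n_R(M,k)$ to have the same growth. That is not what this bound gives. Writing $P^n \cong \bigoplus_i P_i^{\,a_{i,n}}$ with $P_i$ the projective cover of the simple $S_i$, your bound shows that the growth of $\dim_k P^n$ equals the growth of $\sum_i a_{i,n}$, i.e.\ of $\dim_k \Ext^n_R\bigl(M,\bigoplus_i S_i\bigr)$. But $\Hom_R(P^n,k)$ only records $a_{j,n}$ for the single index $j$ with $S_j\cong k$; for a non-$p$-group there is no \emph{a priori} reason the other $a_{i,n}$ cannot grow faster.

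The repair is painless and fits your own framework. Run Step~1 with $\bigoplus_i S_i$ in place of $k$: this gives $\mathrm{c}_G(M)=\max_i\bigl(\text{growth of }\dim_k\Ext^n_R(M,S_i)\bigr)$. In Step~2, note first the easy half: $\mathrm{J}(M)$ annihilates $\Ext^*_R(M,N)$ for \emph{every} $N$ (immediate from the cup-product description of the $H^*$-action), so each $\Ext^*_R(M,S_i)$ is supported inside $\mathcal{V}_G(M)$ and hence, by Step~3, has growth $\le \dim\mathcal{V}_G(M)+1$. For the reverse inequality you use exactly the ``main obstacle'' you already isolated: $\supp\Ext^*_R(M,k)=\mathcal{V}_G(M)$, which gives growth of $\Ext^n_R(M,k)$ equal to $\dim\mathcal{V}_G(M)+1$. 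Combining, the maximum over $i$ is attained at $S_i=k$ and equals $\dim\mathcal{V}_G(M)+1$, which is the desired identity. With this adjustment your sketch is correct.
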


The projective support variety of $kG$ appears as the spectrum of $kG\stab$:
\begin{thm}[cf. \cite{balmer2005spectrum}*{Corollary 5.10}]
	There is a homeomorphism 
	\[\phi: \mathcal{V}_G(k) \longrightarrow  \Spc(kG\stab)~.\]
	Furthermore, the support of a module $M \in kG\stab$ corresponds to $\mathcal{V}_G(M)$ under this map.
	\label{kGspecissuppvar}
\end{thm}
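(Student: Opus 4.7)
The plan is to invoke Balmer's universal property of the spectrum of an essentially small tensor-triangulated category from \cite{balmer2005spectrum}. Recall that a \emph{support datum} on $\mathcal{K}$ with values in a spectral topological space $X$ consists of an assignment of a closed subset $\sigma(M) \subset X$ to every object, satisfying $\sigma(0) = \emptyset$, $\sigma(\mathbf{1}) = X$, additivity on direct sums, invariance under shift, subadditivity on triangles, and the tensor product formula $\sigma(M \otimes N) = \sigma(M) \cap \sigma(N)$. Balmer's theorem states that such a datum factors uniquely through the universal datum $\supp$ via a continuous map $f \colon X \to \Spc(\mathcal{K})$, and that $f$ is a homeomorphism as soon as the assignment $Y \mapsto \lbrace M \in \mathcal{K} : \sigma(M) \subset Y \rbrace$ gives a bijection between the Thomason subsets of $X$ and the radical thick $\otimes$-ideals of $\mathcal{K}$. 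Taking $X = \mathcal{V}_G(k)$ and $\sigma(M) = \mathcal{V}_G(M)$, both the homeomorphism $\rho$ and the identification $\rho(\supp(M)) = \mathcal{V}_G(M)$ follow at once.

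The first task is to check that $M \mapsto \mathcal{V}_G(M)$ really is such a support datum on $kG\stab$ with values in $\mathcal{V}_G(k)$. The behaviour under direct sums, shifts, and distinguished triangles, along with the identity $\mathcal{V}_G(k) = \mathcal{V}_G(k)$, are well-documented properties of cohomological support varieties (cf.\ \cite{carlson}). The one non-formal ingredient is the \emph{tensor product theorem} $\mathcal{V}_G(M \otimes_k N) = \mathcal{V}_G(M) \cap \mathcal{V}_G(N)$; this is a result of Avrunin--Scott in the elementary abelian case and of Benson, Carlson, and Rickard in general, and I would simply cite it from \cite{bensonrep2}.

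The second and genuinely difficult task is the Benson--Carlson--Rickard classification of thick $\otimes$-ideals of $kG\stab$: one must show that $\mathcal{I} \mapsto \bigcup_{M \in \mathcal{I}} \mathcal{V}_G(M)$ is a bijection between thick $\otimes$-ideals and specialization-closed subsets of $\mathcal{V}_G(k)$. Because $\mathcal{V}_G(k)$ is a noetherian spectral space, the Thomason subsets coincide with the specialization-closed ones, so this exactly matches Balmer's hypothesis (and, automatically, every thick $\otimes$-ideal is radical). The main obstacle lies in proving that $\mathcal{V}_G(M) \subset \mathcal{V}_G(N)$ implies $M$ lies in the thick $\otimes$-ideal generated by $N$; the argument requires Carlson's rank varieties, the construction of test modules $L_\zeta$ (or equivalently the Rickard idempotent modules) whose supports realize arbitrary closed subsets of $\mathcal{V}_G(k)$, and a reduction to elementary abelian $p$-subgroups via Quillen's stratification theorem. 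Once this classification is in hand, the hypotheses of Balmer's universal property are all met and the theorem follows formally; no further work is needed to upgrade the bijection on the level of ideals to a homeomorphism of spectra.
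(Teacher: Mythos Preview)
The paper does not give its own proof of this statement; it is quoted with a reference to \cite{balmer2005spectrum}*{Corollary 5.10} and used as a black box. Your proposal is a correct outline of the argument behind that corollary: one checks that $M \mapsto \mathcal{V}_G(M)$ is a support datum on $kG\stab$ (the only non-formal axiom being the tensor product theorem), then feeds the Benson--Carlson--Rickard classification of thick $\otimes$-ideals into Balmer's comparison theorem to conclude that the induced continuous map is a homeomorphism. This is exactly Balmer's route, so there is nothing to contrast.

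One small correction on attribution, which does not affect the mathematics: the tensor product theorem $\mathcal{V}_G(M \otimes_k N) = \mathcal{V}_G(M) \cap \mathcal{V}_G(N)$ is due to Carlson for rank varieties of elementary abelian $p$-groups, together with the Avrunin--Scott identification of rank varieties with cohomological varieties; the extension to arbitrary finite groups goes via Quillen stratification. Benson--Carlson--Rickard is the thick subcategory classification you use in the second step, not the tensor product theorem itself. Since you propose citing the statement from \cite{bensonrep2} anyway, the argument stands.
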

For the rest of the section, we will take $\dim_{\mathrm{Krull}}$ (cf.\ Example \ref{dimfuncex}) as a dimension function for~$kG\stab$. By Theorem \ref{kGspecissuppvar} this coincides with the usual Krull dimension on $\mathcal{V}_G(k)$ under the homeomorphism $\rho$.

\subsection{Derived category vs.\ stable category}
\label{subsectdervsstab}
We consider $\Db(kG\Mod)$, the bounded derived category of finitely generated $kG$-modules with its natural triangulation. It becomes a tensor triangulated category with the usual extension to chain complexes of the tensor product $\otimes_k$ of $kG$-modules \emph{over $k$}.

Let us immediately state that $\Db(kG\Mod)$ and $kG\stab$ are closely related: the category $kG\stab$ arises as a Verdier quotient of $\Db(kG\Mod)$. Let $\mathrm{K^b}(kG\proj)$ denote the bounded homotopy category of finitely generated projective $kG$-modules. Since quasi-isomorphisms between bounded complexes of projective modules are the same as homotopy equivalences, $\mathrm{K^b}(kG\proj)$ embeds into $\Db(kG\Mod)$ as a full triangulated subcategory.
\begin{thm}[see \cite{rickardstable}]
	The natural functor
	\[kG\stab \to \Db(kG\Mod)/\mathrm{K^b}(kG\proj)\]
	induced by the inclusion $kG\Mod \to \Db(kG\Mod)$ is an exact equivalence of tensor triangulated categories.
\end{thm}

The following theorem tells us that the spectra of $\Db(kG\Mod)$ and $kG\stab$ differ in one point only.
\begin{thm}[see \cite{balmer2010spectra}*{Theorem 8.5}]
	We have a homeomorphism
	\[\rho: \Spc(\Db(kG\Mod)) \longrightarrow \mathrm{Spec}^{\mathrm{h}}(\mathrm{H}^*(G,k))\] 
	where $\mathrm{Spec}^{\mathrm{h}}(\mathrm{H}^*(G,k))$ is the spectrum of homogeneous prime ideals in $\mathrm{H}^*(G,k)$. Furthermore the diagram
	\[
	\xymatrix{
		\Spc(kG\stab) \ar[r]^(0.45){\Spc(q)} & \Spc(\Db(kG\Mod)) \ar[d]^{\rho} \\
		\mathrm{Proj}(\mathrm{H}^*(G,k)) \ar[u]^{\varphi} \ar@{^{(}->}[r] & \mathrm{Spec}^{\mathrm{h}}(\mathrm{H}^*(G,k))
	}
	\]
	commutes, where $\varphi$ is the homeomorphism from Theorem \ref{kGspecissuppvar}, $\Spc(q)$ is the map associated to the quotient functor 
	\[q: \Db(kG\Mod) \to \Db(kG\Mod)/\mathrm{K^b}(kG\proj) \cong kG\stab~,\] 
	and the lower arrow is the inclusion of the open subset with complement the unique closed point of $\mathrm{Spec}^{\mathrm{h}}(\mathrm{H}^*(G,k))$ corresponding to the irrelevant ideal.
	\label{thmspcstabdercomp}
\end{thm}

\begin{rem}
	It is crucial here that we consider $\Db(kG\Mod)$ with the tensor product $\otimes_k$, as opposed to $\otimes_{kG}$: there is no natural left-module structure on $M \otimes_{kG} N$ for two left $kG$-modules $M,N$. If $G$ is commutative, $\otimes_{kG}$ makes $\mathrm{K^b}(kG\proj) \subset \Db(kG\Mod)$ a tensor triangulated category, but its spectrum is much less interesting, as it is homeomorphic to the usual prime ideal spectrum $\mathrm{Spec}(kG)$.
\end{rem}

We start to compare $\Chow^{\Delta}_p(\Db(kG\Mod))$ and $\Chow^{\Delta}_p(kG\stab)$.
\begin{prop}
	Consider $kG\stab$ and $\Db(kG\Mod)$ with the Krull dimension of support as a dimension function. Then for all $p \geq 0$, the Verdier quotient functor
	\[q: \Db(kG\Mod) \to \Db(kG\Mod)/\mathrm{K^b}(kG\proj) \cong kG\stab\]
	induces isomorphisms
	\[\Cyc^{\Delta}_{p+1}(\Db(kG\Mod)) \cong \Cyc^{\Delta}_p(kG\stab)~.\]
	\label{propcycstabdersame}
\end{prop}
\begin{proof}
	First, we claim that the functor $q$ sends an object with dimension of support $p+1$ to an object with dimension of support $p$ for $p \geq 0$: observe that 
	\[\supp(q(a)) = \Spc(q)^{-1}(\supp(a))~,\]
	as for any tensor triangulated functor (see \cite{balmer2005spectrum}*{Proposition 3.6}). Then, identifying $\Spc(q)$ with the inclusion of the subspace 
	\[\mathrm{Proj}(\mathrm{H}^*(G,k)) \hookrightarrow \mathrm{Spec}^{\mathrm{h}}(\mathrm{H}^*(G,k))\]
	as in Theorem \ref{thmspcstabdercomp}, we obtain
	\[\supp(q(a))) = \Spc(q)^{-1}(\supp(a)) = \supp(a) \cap \Spc(kG\stab) \subset \Spc(\Db(kG\Mod))~.\] 
	The claim now follows since the space $\Spc(\Db(kG\Mod))$ has exactly one closed point $\lbrace 0 \rbrace \subset \Db(kG\Mod)$ more than $\Spc(kG\stab)$, which is contained in the closure of every point of $\Spc(\Db(kG\Mod))$.
	
	If $\mathcal{K} = \Db(kG\Mod)$ and $\mathcal{J} = \mathrm{K^b}(kG\proj)$, we see that 
	\begin{align*}
	\mathcal{K}_{(p+1)}/\mathcal{K}_{(p)} &\cong (\mathcal{K}_{(p+1)}/\mathcal{J})/(\mathcal{K}_{(p)}/\mathcal{J})\\
	&\cong kG\stab_{(p)}/ kG\stab_{(p-1)}~,
	\end{align*}
	and the equivalence induces one on the idempotent completions. By applying $\Kzero(-)$, we get the desired result.
\end{proof}

\begin{rem}
	We can also compute $\Cyc^{\Delta}_{0}(\Db(kG\Mod))$: 
	\[\Cyc^{\Delta}_{0}(\Db(kG\Mod)) = \Kzero\left(\Db(kG\Mod)_{(0)}\right) = \Kzero\left(\mathrm{K^b}(kG\proj)\right) \cong \Kzero(kG)~.\]
	Here, the last equality is just the usual isomorphism between the Grothendieck group of the bounded derived category of an exact category and the Grothendieck group of the exact category itself (see e.g.\ \cite{weibelkbook}*{Chapter II, Theorem 9.2.2}). The equality $\Db(kG\Mod)_{(0)} = \mathrm{K^b}(kG\proj)$ can be deduced as follows: let $q: \Db(kG\Mod) \to kG\stab$ be the Verdier quotient functor and let $a \in \Db(kG\Mod)_{(0)}$. Then $\supp(a)$ is contained in $\lbrace 0 \rbrace \subset \Spc(\Db(kG\Mod))$ and $\supp(q(a)) = \supp(a) \cap \Spc(kG\stab) = \emptyset$ which implies that $q(a) = 0 \Leftrightarrow a \in \mathrm{K^b}(kG\proj)$. On the other hand, $\mathrm{K^b}(kG\proj)$ is contained in every non-zero $\otimes$-ideal of $\Db(kG\Mod)$ (see \cite{balmer2010spectra}*{Proof of Prop.\ 8.5}), from which it follows that for any $a \in \mathrm{K^b}(kG\proj)$, we must have $\supp(a) \subset \lbrace 0 \rbrace \subset \Spc(\Db(kG\Mod)$.
\end{rem}

In order to obtain the statement of Proposition \ref{propcycstabdersame} for Chow groups instead of cycle groups, we recall the following elementary lemma about abelian groups.
\begin{lma}
	Let $f: A \to B$ be a morphism of abelian groups, $S \subset A$ be a subgroup and $\hat{f}: A/S \to B/f(S)$ be the induced morphism. Then $\ker\hat{f} = p(\ker f)$, where $p: A \to A/S$ is the canonical projection.
	\label{lmakerpreserve}
	\qed
\end{lma}

\begin{thm}
	Consider $kG\stab$ and $\Db(kG\Mod)$ with the Krull dimension of support as a dimension function. Then for all $p \geq 0$, there are isomorphisms
	\[\Chow^{\Delta}_p(kG\stab) \cong \Chow^{\Delta}_{p+1}(\Db(kG\Mod))~.\]
	\label{thmderivedstabiso}
\end{thm}
\begin{proof}
	Let $\mathcal{K} :=\Db(kG\Mod), \mathcal{T}:=kG\stab$ and consider the commutative diagram
	\[
	\xymatrix{
		\mathcal{K}_{(p+1)} \ar[r]^(0.35){\pi} \ar[d]^{\iota} \ar[rd]^{\kappa}& \left(\mathcal{K}_{(p+1)}/\mathcal{K}_{(p)}\right)^{\natural} \ar[rd]^{\psi} & \\
		\mathcal{K}_{(p+2)} \ar[rd]^{\lambda} & \mathcal{T}_{(p)} \ar[r]^(0.35){\underline{\pi}} \ar[d]^{\underline{\iota}} & \left(\mathcal{T}_{(p)}/\mathcal{T}_{(p-1)}\right)^{\natural} \\
		& \mathcal{T}_{(p+1)} 
	}
	\]
	where the diagonal functors $\kappa, \lambda$ are restrictions of the Verdier quotient 
	\[q: \Db(kG\Mod) \to kG\stab\] 
	and $\psi$ is the equivalence from the proof of Proposition \ref{propcycstabdersame}. We have 
	\[\ker(\Kzero(\underline{\iota})) = \Kzero(\kappa)(\ker(\Kzero(\iota)))\] 
	as we are in the situation of Lemma \ref{lmakerpreserve}: indeed, if $j: \mathrm{K^b}(kG\proj) \to \mathcal{K}_{(p+1)}$ denotes the inclusion functor, then we can set $A := \Kzero(\mathcal{K}_{(p+1)}), B:= \Kzero(\mathcal{K}_{(p+2)}), S := \im(\Kzero(j)), p := \Kzero(\kappa), f:= \Kzero(\iota)$ and $\hat{f} := \Kzero(\underline{\iota})$. This shows that \[\Kzero(\underline{\pi})(\ker(\Kzero(\underline{\iota}))) = \Kzero(\underline{\pi}) \circ \Kzero(\kappa)(\ker(\Kzero(\iota))) = \Kzero(\psi) \circ \Kzero(\pi)(\ker(\Kzero(\iota)))\]
	which gives the desired result.
\end{proof}

We now proceed to compute some examples of tensor triangular Chow groups coming from~$kG\stab$.

\subsection{The case $G = \mathbb{Z}/p^n\mathbb{Z}$}
We begin with the case where $G = \mathbb{Z}/p^n\mathbb{Z}$ for some prime $p$ and~$n \in \mathbb{N}$. In the following, $k$ will be any field of characteristic $p$. It follows from \cite{carlson}*{Theorem 7.3} that $\mathcal{V}_G(k)$ is a point, and so a finitely generated $kG$-module has complexity 1 if and only if it is non-projective. 

Computing the tensor triangular cycle groups for $kG\stab$ amounts to calculating 
\[\Kzero\left((kG\stab_{(i)}/kG\stab_{(i-1)})^\natural\right)~.\]
One immediately sees that there is only one non-trivial case, namely when $i=0$. Then 
\[\Cyc^{\Delta}_0(kG\stab) = \Kzero(kG\stab)~,\]
as $kG\stab$ is idempotent complete. In order to compute this Grothendieck group, we use the following result:
\begin{prop}[see \cite{tachikawak0}*{Proposition 1}]
	Let $B$ be a Frobenius $k$-algebra, let $B\Mod$ be the category of finitely generated left $B$-modules and $B\stab$ the corresponding stable category. Then it holds that
	\[\Kzero(B\stab) \cong \Kzero(B\Mod)/\langle \mathrm{proj} \rangle~,\]
	where $\langle \mathrm{proj} \rangle$ is the subgroup generated by the isomorphism classes of projective modules.
	\label{tachiwakak0}
\end{prop}

Note that $\Kzero(kG\Mod)) \cong \mathbb{Z}$, as $kG$ is a commutative local artinian ring: indeed, for modules over artinian rings, being finitely generated and having finite length are equivalent, and then the result follows by  Lemma \ref{lmalengthk0}. For local rings, projective and free modules coincide, and thus it follows from Proposition \ref{tachiwakak0} that
\[\Cyc^{\Delta}_0(kG\stab) \cong \mathbb{Z}/p^n\mathbb{Z}~.\] 
We also see that this group coincides with $\Chow^{\Delta}_0(kG\stab)$, as $\dim(\mathcal{V}_G(k)) = 0$. Summarizing, we have the following:
\begin{prop}
	Let $G = \mathbb{Z}/p^n\mathbb{Z}$ for some prime $p$ and $n \in \mathbb{N}$ and $k$ any field of characteristic~$p$. Then 
	\[\Cyc^{\Delta}_i(kG\stab) = \Chow^{\Delta}_i(kG\stab) = 0 \quad \text{for all } i \neq 0\]
	and
	\[\Cyc^{\Delta}_0(kG\stab) = \Chow^{\Delta}_0(kG\stab) = \mathbb{Z}/p^n\mathbb{Z}.\]
	\label{cyclicexample}
\end{prop}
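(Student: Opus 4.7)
The plan is to first use the fact that $\Spc(kG\stab)$ is a single point to reduce the entire computation to the single filtration degree $i=0$, and then to compute the Grothendieck group of $kG\stab$ directly via the standard presentation of $K_0$ of a stable category.

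More concretely, I would start by invoking \cite{carlson}*{Theorem 7.3}, which (modulo nilpotents) identifies $\mathrm{H}^*(G,k)$ with a polynomial ring in one variable for $G = \mathbb{Z}/p^n\mathbb{Z}$, so that $\mathcal{V}_G(k) = \Proj(\mathrm{H}^*(G,k))$ consists of a single point. By Theorem \ref{kGspecissuppvar} the spectrum $\Spc(kG\stab)$ is then a single point of Krull dimension $0$. It follows that the codimension filtration collapses: $kG\stab_{(i)} = kG\stab$ for $i\geq 0$ and $kG\stab_{(i)} = 0$ for $i<0$. Hence the only subquotient $kG\stab_{(i)}/kG\stab_{(i-1)}$ that is not the zero category is the one for $i=0$, which is $kG\stab$ itself. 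This gives $\Cyc^{\Delta}_i(kG\stab) = \Chow^{\Delta}_i(kG\stab) = 0$ for every $i\neq 0$.

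For $i=0$, I would next observe that $kG\stab$ is idempotent complete (being the stable category of a Frobenius category), so $\Cyc^{\Delta}_0(kG\stab) = \Kzero(kG\stab)$ directly. The description of the Grothendieck group of a stable category recalled in \cite{tachikawak0}*{Proposition 1} then identifies $\Kzero(kG\stab)$ with the quotient of $\Kzero(kG\Mod)$ by the subgroup generated by the classes of the projective-injective modules; since $kG$ is local, every projective is free and this subgroup is generated by $[kG]$. Finally, $kG$ is a commutative local artinian ring with residue field $k$, so $\Kzero(kG\Mod) \cong \mathbb{Z}$ generated by $[k]$, and $[kG] = p^n [k]$ since $\dim_k(kG) = p^n$. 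Therefore $\Cyc^{\Delta}_0(kG\stab) \cong \mathbb{Z}/p^n\mathbb{Z}$.

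The statement for the Chow group at $i=0$ is then immediate: in the defining diagram for $\Chow^{\Delta}_0$, the map $i\colon \Kzero(kG\stab_{(0)}) \to \Kzero(kG\stab_{(1)})$ is just the identity on $\Kzero(kG\stab)$, so $\ker(i) = 0$ and no quotient is taken. The main conceptual point is the collapse of the filtration coming from the zero-dimensionality of the spectrum; once that is in place, every subsequent step is a standard identification, so I do not expect any serious obstacle.
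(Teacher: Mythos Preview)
Your proposal is correct and follows essentially the same route as the paper: both use \cite{carlson}*{Theorem 7.3} to see that $\Spc(kG\stab)$ is a point, reduce to $i=0$, invoke idempotent completeness of $kG\stab$, and then compute $\Kzero(kG\stab)\cong\mathbb{Z}/p^n\mathbb{Z}$ via \cite{tachikawak0}*{Proposition 1} together with $\Kzero(kG\Mod)\cong\mathbb{Z}$ for the local artinian ring $kG$. Your justification that $\Chow^{\Delta}_0=\Cyc^{\Delta}_0$ because the map $i$ is the identity is exactly what the paper means by ``we are in the top dimension''.
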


\subsection{The case $G = \mathbb{Z}/2\mathbb{Z} \times \mathbb{Z}/2\mathbb{Z}$} \index{Klein four-group} 
If $G = \mathbb{Z}/2\mathbb{Z} \times \mathbb{Z}/2\mathbb{Z} = \langle x,y \vert x^2=y^2=1, xy = yx \rangle$ and $k$ is a field of characteristic 2, the computations become more involved.

As a consequence of \cite{carlson}*{Theorem 7.6}, we have that $\mathcal{V}_G(k) = \mathbb{P}^1$. Therefore there is a proper subcategory of $kG\stab$ consisting of the modules of complexity~$\leq 1$. In order to work with those, we need the following classification:
\begin{lma}
	All finite-dimensional indecomposable $kG$-modules of odd dimension have complexity 2.
	\label{oddcomplexity}
\end{lma}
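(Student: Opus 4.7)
The plan is to invoke the complexity formula $c_G(M) = \dim\mathcal{V}_G(M) + 1$ (recalled above) together with the identification $\mathcal{V}_G(k) = \mathbb{P}^1$. Since $\mathcal{V}_G(M)$ is a closed subset of $\mathbb{P}^1$, the complexity of any non-projective $M$ is automatically at most $2$, and the task reduces to showing $\mathcal{V}_G(M) = \mathcal{V}_G(k)$ for every indecomposable odd-dimensional $M$.

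To carry this out I would use the rank variety description of $\mathcal{V}_G(M)$. Write $J := J(kG) = (x-1, y-1)$ for the Jacobson radical; since $k$ has characteristic $2$ and $x^2 = y^2 = 1$, we have $(x-1)^2 = (y-1)^2 = 0$. For each nonzero class $u = a(x-1) + b(y-1) \in J/J^2$ the element $1+u$ is a unit of order dividing $2$ in $(kG)^{\times}$, and the cyclic shifted subgroup $k\langle 1+u\rangle \subset kG$ is isomorphic to $k[t]/(t^2)$. By the Avrunin--Scott theorem (see e.g.\ \cite{carlson}*{Theorem 10.7} or \cite{bensonrep2}*{\S 5.8}), the projective rank variety
\[
\bigl\{\,[u] \in \mathbb{P}(J/J^2) \,:\, M\!\downarrow_{k\langle 1+u\rangle} \text{ is not free}\,\bigr\}
\]
coincides with $\mathcal{V}_G(M)$ as a subset of $\mathcal{V}_G(k) = \mathbb{P}(J/J^2) \cong \mathbb{P}^1$.

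The decisive observation is then elementary: a free module over $k[t]/(t^2)$ is a sum of copies of the regular representation, which is $2$-dimensional, so every free $k\langle 1+u\rangle$-module has even $k$-dimension. Consequently, if $\dim_k M$ is odd then $\dim_k\bigl(M\!\downarrow_{k\langle 1+u\rangle}\bigr) = \dim_k M$ is odd, the restriction is never free, every $[u]$ lies in the rank variety, and $\mathcal{V}_G(M) = \mathbb{P}^1$. The complexity formula then yields $c_G(M) = 2$. The indecomposability hypothesis intervenes only through the observation that $M$ is necessarily non-projective: the unique indecomposable projective $kG$-module is $kG$ itself, of $k$-dimension $4$, so no odd-dimensional module can have a nontrivial projective summand.

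I do not anticipate a serious technical obstacle, since both ingredients --- the Avrunin--Scott identification of cohomological and rank varieties, and the complexity-dimension formula --- are standard and already black-boxed by the references cited in the preceding subsection, while the parity argument itself is trivial. The only mild care required is that the rank variety description be applied over a possibly non-algebraically-closed $k$ (working with $\bar{k}$-points, or with the reduced scheme structure on $\mathcal{V}_G(M)$), which is well known but worth mentioning explicitly.
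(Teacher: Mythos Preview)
Your argument is correct and takes a genuinely different route from the paper's.

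The paper argues by contradiction using periodicity: assuming $c_G(M)=1$, the cited results from \cite{bensonrep2} force $M$ to be periodic of period~$1$, i.e.\ $\Omega M \cong M$; since $kG$ is local with $\dim_k kG = 4$, a dimension count on the projective cover $P \twoheadrightarrow M$ shows $\dim_k \Omega M = 4m - (2n+1)$ can never equal $2n+1$. Non-projectivity then forces $c_G(M)=2$.

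Your approach via rank varieties is more direct and more informative: rather than ruling out $c_G(M)=1$, you actually exhibit $\mathcal{V}_G(M) = \mathbb{P}^1$ by the parity observation that free $k[t]/(t^2)$-modules are even-dimensional. This yields the stronger conclusion that the support variety of any odd-dimensional module (indecomposable or not) is all of $\mathbb{P}^1$, and as you note, indecomposability is used only to exclude projectivity---which odd dimension already does. The cost is the invocation of Avrunin--Scott, whereas the paper's argument stays closer to the elementary structure of projective covers over the local ring $kG$; on the other hand, the paper must import the non-obvious fact that periodic modules over this particular group have period exactly~$1$, which your argument bypasses entirely. Your remark about non-closed $k$ is apt: since complexity is invariant under base change and $\dim_k M$ is preserved, passing to $\bar{k}$ handles this cleanly.
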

\begin{proof}
	Let $M$ be a odd-dimensional indecomposable module. If we assume that $M$ has complexity 1, then by \cite{bensonrep2}*{Theorem 5.10.4 and Corollary 5.10.7}, $M$ must be periodic, with period~1. In other words, if $\epsilon: P \twoheadrightarrow M$ is a projective cover of $M$, then we must have~$M \cong \ker(\epsilon)$. However, since $G$ is a $2$-group, the only indecomposable projective module is the free module of rank 1 (see \cite{bensonrep1}*{Section 3.14}), which has $k$-dimension 4. Thus, if $M$ has dimension $2n+1$ and $P$ has dimension $4m$, then using that $\epsilon$ is surjective and the dimension formula, we get~$\dim_k(\ker(\epsilon)) = 4m-2n-1$. We see immediately that $\ker(\epsilon)$ cannot have dimension $2n+1$, and thus $M$ cannot have complexity~1. As it is non-projective it must therefore have complexity~2.
\end{proof}

We also see that a complementary result holds for the even-dimensional representations:
\begin{lma}
	All finite-dimensional, non-projective indecomposable $kG$-mod\-ules of even dimension have complexity~1.
	\label{evencomplexity}
\end{lma}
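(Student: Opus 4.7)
The plan is to rule out the possibility of complexity $2$ for a non-projective indecomposable $M$ of even dimension. Since $\mathcal{V}_G(k) \cong \mathbb{P}^1$ has (Krull) dimension $1$, we have $c_G(M) \in \{1,2\}$, so it suffices to exclude $c_G(M) = 2$.

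A first idea is to mirror the dimension-counting argument of the previous lemma. If $c_G(M) = 1$, then $M$ is periodic, and for $G = \mathbb{Z}/2\mathbb{Z} \times \mathbb{Z}/2\mathbb{Z}$ in characteristic $2$ the period is always $1$: this can be seen by restricting to rank-one shifted subgroups, over which the only non-projective indecomposable is the trivial module and hence has period $1$. So $\Omega M \cong M$, and combined with $\dim_k P_M \equiv 0 \pmod 4$ (the only indecomposable projective $kG$-module being $kG$ itself, of dimension $4$), this forces $\dim_k M = \dim_k P_M / 2$ to be even. This however only establishes the converse direction ``complexity $1$ implies even dimension'' and does not by itself rule out even-dimensional indecomposables of complexity $2$.

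To obtain the direction actually claimed in the lemma, I would invoke the classical classification of indecomposable $kG$-modules for the Klein four group, due to Basev: every non-projective indecomposable is isomorphic either to a Heller translate $\Omega^n(k)$ for some $n \in \mathbb{Z}$, of dimension $2|n|+1$ and complexity $2$, or to an even-dimensional periodic module in a one-parameter family indexed by $\mathbb{P}^1(\bar{k})$, each of complexity $1$; see for instance Benson \cite{bensonrep1}. Given this dichotomy, a non-projective indecomposable of even dimension cannot be of the form $\Omega^n(k)$ (which is ruled out already by the previous lemma) and must lie in the second family, and so has complexity $1$.

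The main obstacle is that this classification, while standard, is a substantial external input. A more self-contained approach would combine Carlson's rank-variety description of $\mathcal{V}_G(M)$ with Dade's lemma in order to analyse the restriction of $M$ to each rank-one shifted subgroup of $kG$, and then extract periodicity from the structural constraints on those restrictions. However, executing this cleanly without essentially re-deriving the classification seems to require detailed case analysis of the possible Jordan types, so the classification-based route appears to be the more efficient plan.
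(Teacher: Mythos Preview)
Your proposal is correct, and your third-paragraph argument via the classification of indecomposable $kG$-modules is essentially the route the paper takes in the Remark immediately following the lemma (citing \cite{bensonrep1}*{Theorem 4.3.3} and then verifying periodicity explicitly for the even-dimensional family). The paper's \emph{main} proof, however, is shorter still: it simply cites \cite{cheboluklein4}*{Proposition 3.1}, which already asserts that every non-projective indecomposable $kG$-module of even dimension is periodic of period~$1$, so complexity~$1$ is immediate. Thus the paper offers two proofs, and yours matches the second of them; the difference is that the paper's Remark goes one step further and writes down the projective cover explicitly, whereas you are content to read off periodicity from the classification statement itself. Either is fine.

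One small quibble: in your second paragraph, the claim that periodicity over each rank-one shifted subgroup forces period~$1$ for $M$ itself is not quite immediate (restriction can lose information about the period), but since you yourself discard that paragraph as proving only the converse direction, it does no harm to the final argument.
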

\begin{proof}
	It follows from \cite{cheboluklein4}*{Proposition 3.1} that a non-projective indecomposable $kG$-module of even dimension is periodic with period~1. As an immediate consequence, those modules have complexity~1.
\end{proof}

The following is a direct consequence of Lemma \ref{oddcomplexity} and Lemma~\ref{evencomplexity}:
\begin{cor}
	The indecomposable $kG$-modules of odd dimension are exactly the indecomposable modules of complexity~2. The non-projective indecomposable $kG$-modules of even dimension are exactly the indecomposable modules of complexity~1.
	\qed
	\label{complexityclasses}
\end{cor}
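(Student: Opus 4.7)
The plan is to combine Lemma \ref{oddcomplexity} and Lemma \ref{evencomplexity} with the dichotomy that every finite-dimensional indecomposable $kG$-module is either of odd or of even $k$-dimension. First I would record the general constraint: since $\mathcal{V}_G(k) \cong \mathbb{P}^1$ is one-dimensional, the inclusion $\mathcal{V}_G(M) \subset \mathcal{V}_G(k)$ together with the theorem $\dim(\mathcal{V}_G(M)) = c_G(M) - 1$ forces $c_G(M) \leq 2$ for every finitely generated $kG$-module $M$. Moreover, $c_G(M) = 0$ is equivalent to $M$ being projective. Hence the non-projective indecomposable $kG$-modules split into exactly two classes by complexity, namely those of complexity $1$ and those of complexity $2$, and they also split into exactly two classes by dimension parity.

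For the first statement of the corollary, the forward direction (odd dimension implies complexity $2$) is the content of Lemma \ref{oddcomplexity}. For the converse, let $M$ be an indecomposable module of complexity $2$; then $M$ is in particular non-projective, so it is either even- or odd-dimensional. If it were even-dimensional, Lemma \ref{evencomplexity} would give $c_G(M) = 1$, contradicting the assumption. Hence $M$ is odd-dimensional.

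The second statement is symmetric. The forward direction is Lemma \ref{evencomplexity}; for the converse, a non-projective indecomposable $M$ of complexity $1$ cannot be odd-dimensional, as Lemma \ref{oddcomplexity} would then assign it complexity $2$, so $M$ has even dimension.

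Since both lemmas are already established, there is essentially no obstacle here; the only thing to be careful about is separating the projective case (complexity $0$) from the two non-trivial complexity classes and making sure the statements are interpreted as equivalences restricted to indecomposable (and, in the second case, non-projective) modules.
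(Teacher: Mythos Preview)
Your proposal is correct and matches the paper's approach exactly: the paper simply states that the corollary is ``an immediate consequence of Lemma \ref{oddcomplexity} and Lemma \ref{evencomplexity}'' and places a \qed, and what you have written is precisely the routine unpacking of that implication via parity and the complexity bound.
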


Using this classification, we can calculate the zero-dimensional Chow group.
\begin{lma}
	The map 
	\[[M] \mapsto \dim_k(M) \mod 4\] 
	defines an isomorphism $\Kzero(kG\stab) \to \mathbb{Z}/4\mathbb{Z}$. Furthermore, if 
	\[\alpha: \Kzero(kG\stab_{(0)}) \to \Kzero(kG\stab) \cong \mathbb{Z}/4\mathbb{Z}\]
	denotes the map induced by the inclusion functor $kG\stab_{(0)} \to kG\stab$, then 
	\[\im(\alpha) \cong \mathbb{Z}/2\mathbb{Z} \subset \mathbb{Z}/4\mathbb{Z}~.\] \label{lmak0z4z2}
\end{lma}
\begin{proof}
	The ring $kG$ is local and artinian, and thus it follows from Lemma \ref{lmalengthk0} that the map 
	\[[M] \mapsto \length(M) = \dim_k(M)\]
	defines an isomorphism $\Kzero(kG\Mod) \to \mathbb{Z}$. Therefore, the map 
	\[[M] \mapsto \dim_k(M) \mod 4\] 
	defines an isomorphism $\Kzero(kG\stab) \to \mathbb{Z}/4\mathbb{Z}$ by Lemma \ref{tachiwakak0}, as every finitely generated projective module over a local ring is free.
	
	By Corollary \ref{complexityclasses}, the image of $\alpha$ in $\Kzero(kG\stab)$ consists of exactly those classes $[M]$ where $M$ has even dimension, i.e.\ 
	\[\dim_k(M) =0 \mod 4\] 
	or 
	\[\dim_k(M) = 2 \mod 4~.\] 
	Thus, $\im(\alpha) \cong \mathbb{Z}/2\mathbb{Z}$.
\end{proof}

\begin{prop} There is an isomorphism
	\[\Chow^{\Delta}_0(kG\stab) \cong \mathbb{Z}/2\mathbb{Z}.\]
	\label{Kleinfourexample1}
\end{prop}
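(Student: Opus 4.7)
The plan is to compute $\Cyc^{\Delta}_0(kG\stab)$ directly via the local-to-global decomposition of Remark \ref{altcycrem} and then mod out by rational equivalence. By Theorem \ref{kGspecissuppvar} together with \cite{carlson}*{Theorem 7.6}, $\Spc(kG\stab) \cong \mathbb{P}^1_k$; with respect to $\dim_{\mathrm{Krull}}$ this gives $kG\stab_{(-1)} = 0$, $kG\stab_{(1)} = kG\stab$, and $kG\stab_{(0)}$ equal to the full subcategory of modules of complexity $\leq 1$, whose non-zero indecomposables are exactly the even-dimensional non-projective modules $M_n(J)$ by Corollary \ref{complexityclasses}. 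Since $kG\stab_{(-1)}=0$, we have
\[
\Cyc^{\Delta}_0(kG\stab) = \Kzero\bigl((kG\stab_{(0)})^{\natural}\bigr) \cong \bigoplus_{P \in \Spc(kG\stab)_{0}} \Kzero\bigl(\Min(\mathcal{K}_P)\bigr).
\]

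Next, I claim each local summand is isomorphic to $\mathbb{Z}/2\mathbb{Z}$, generated by the class of the 2-dimensional indecomposable $M_1(J)$ supported at the relevant closed point. The projective cover of $M_1(J)$ yields a short exact sequence $0 \to M_1(J) \to kG \to M_1(J) \to 0$, which in $kG\stab$ (where $[kG]=0$) becomes a distinguished triangle $M_1(J) \to 0 \to M_1(J) \to M_1(J)[1]$, giving the relation $2[M_1(J)] = 0$ in $\Kzero$. Using the standard filtration $0 \to M_{n-1}(J) \to M_n(J) \to M_1(J) \to 0$ for the regular tube at $J$ (which lies in $kG\stab_{(0)}$), induction on $n$ gives $[M_n(J)] = n[M_1(J)]$, so the $J$-th local summand is cyclic of order dividing $2$. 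Nontriviality of $[M_1(J)]$ will be confirmed shortly by pushing forward to $\Kzero(kG\stab)$.

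The latter group is computed as in the proof of Proposition \ref{cyclicexample}: $kG$ is a commutative local artinian ring of $k$-dimension $4$, so $\Kzero(kG\Mod) \cong \mathbb{Z}$ via dimension and $\Kzero(kG\stab) = \Kzero(kG\Mod)/\langle [kG] \rangle \cong \mathbb{Z}/4\mathbb{Z}$. Under this identification, $i\colon \Kzero(kG\stab_{(0)}) \to \Kzero(kG\stab)$ sends $[M_n(J)] \mapsto 2n \bmod 4$; in particular $[M_1(J)] \mapsto 2 \neq 0$ (confirming nontriviality) and $[M_1(J)] - [M_1(J')] \in \ker(i)$ for any two distinct closed points $J, J'$. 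Writing $e_J$ for the generator of the $J$-th summand of $\Cyc^{\Delta}_0(kG\stab) \cong \bigoplus_J \mathbb{Z}/2\mathbb{Z}$, the image of this kernel element under $j\circ q$ is $e_J - e_{J'}$, and a routine check on representatives shows that $j\circ q(\ker(i))$ is exactly the augmentation subgroup $\{\sum a_J e_J : \sum a_J \equiv 0 \pmod{2}\}$. Passing to the quotient gives $\Chow^{\Delta}_0(kG\stab) \cong \mathbb{Z}/2\mathbb{Z}$.

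The main obstacle is establishing that each local summand $\Kzero(\Min(\mathcal{K}_P))$ is genuinely $\mathbb{Z}/2\mathbb{Z}$ rather than a larger $2$-torsion group; the cleanest route, as outlined above, uses the explicit matrix classification of even-dimensional indecomposables together with the tube filtration, but an intrinsic analysis of $\mathcal{K}_P$ would also suffice. Once this is in place, the computation of $\Chow^{\Delta}_0$ reduces to the elementary linear-algebra task of identifying the image of $\ker(i)$ in the direct sum, which is handled in the last paragraph.
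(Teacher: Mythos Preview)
Your overall strategy can be made to work but is far more elaborate than the paper's. The paper's key observation is that, since $kG\stab_{(-1)}=0$ and $kG\stab_{(0)}$ is already idempotent complete (being a thick subcategory of an idempotent complete category), both $q$ and $j$ in the defining diagram are identities. Hence $\Chow^{\Delta}_0 = \Kzero(kG\stab_{(0)})/\ker(\alpha) \cong \im(\alpha)$ by the first isomorphism theorem, where $\alpha = i\colon \Kzero(kG\stab_{(0)}) \to \Kzero(kG\stab)$. The image is then read off immediately: $\Kzero(kG\stab) \cong \mathbb{Z}/4\mathbb{Z}$ via $k$-dimension, and by Corollary~\ref{complexityclasses} the objects of $kG\stab_{(0)}$ are exactly the even-dimensional modules, so $\im(\alpha) = 2\mathbb{Z}/4\mathbb{Z} \cong \mathbb{Z}/2\mathbb{Z}$. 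No computation of $\Cyc^{\Delta}_0$ or of the local pieces $\Min(\mathcal{K}_P)$ is needed. In fact your very last step---passing from the augmentation kernel to the quotient $\mathbb{Z}/2\mathbb{Z}$---\emph{is} the first isomorphism theorem, so the detour through the local decomposition accomplishes nothing beyond what the direct image computation already gives.

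That detour also introduces genuine difficulties that the paper avoids. Proposition~\ref{Kleinfourexample1} is stated without assuming $k$ algebraically closed (only Proposition~\ref{Kleinfourexample2} needs that hypothesis), and over a non-closed $k$ the closed points of $\mathbb{P}^1_k$ may have residue degree $d>1$; the smallest indecomposable supported at such a point then has $k$-dimension $2d$, so your nontriviality-via-pushforward argument breaks down when $d$ is even (its class maps to $0\in\mathbb{Z}/4\mathbb{Z}$), and the ``augmentation subgroup'' description of $\ker(i)$ is no longer accurate. Your tube-filtration claim and the identification of which $M_n(J)$ live at which closed point likewise require more care over non-closed $k$. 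None of this is necessarily fatal---the conclusion would survive once the dust settled, since $\Cyc^{\Delta}_0/\ker(i)=\im(i)$ regardless---but the paper's argument sidesteps all of it by never looking inside $\Cyc^{\Delta}_0$ at all.
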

\begin{proof}
	By definition, 
	\[\Cyc^{\Delta}_0(kG\stab) = \Kzero\left((kG\stab_{(0)})^{\natural}\right) \cong \Kzero(kG\stab_{(0)})~,\] 
	as $kG\stab_{(-1)} = 0$ and thick subcategories of idempotent complete categories are idempotent complete themselves. Using this, we have that
	\[\Chow^{\Delta}_0(kG\stab) \cong \Cyc^{\Delta}_0(kG\stab)/\ker(\alpha)~,\] 
	where 
	\[\alpha: \Kzero(kG\stab_{(0)}) \to \Kzero(kG\stab_{(1)}) = \Kzero(kG\stab)\] 
	is the map from Lemma \ref{lmak0z4z2}. Using the isomorphism theorem for abelian groups, we conclude that
	\[\Chow^{\Delta}_0(kG\stab) \cong \im(\alpha) \cong \mathbb{Z}/2\mathbb{Z}\]
	by Lemma \ref{lmak0z4z2}.
\end{proof}

For the one-dimensional Chow group we need to work a bit harder. We first take a closer look at the quotient $\mathcal{L} := kG\stab/kG\stab_{(0)}$.
\begin{lma}
	Assume $k$ is algebraically closed. The category $\mathcal{L}$ is idempotent complete.
	\label{Lisidempcomp}
\end{lma}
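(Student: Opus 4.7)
The plan is to prove that $\mathcal{L}$ is idempotent complete by exhibiting it as a Krull--Schmidt category, since in any Krull--Schmidt category every idempotent automatically splits. First I would invoke Corollary \ref{complexityclasses}: every object $M \in kG\stab$ decomposes (via Krull--Schmidt in $kG\Mod$) as a finite direct sum of indecomposables, which fall into two classes---odd-dimensional (complexity $2$) and even-dimensional non-projective (complexity $1$). Modules in the latter class have support of Krull dimension $0$ in $\mathcal{V}_G(k) = \mathbb{P}^1_k$ and hence lie in $kG\stab_{(0)}$, so they vanish in $\mathcal{L}$. Consequently every object of $\mathcal{L}$ is isomorphic to a finite direct sum of (images of) odd-dimensional indecomposables.

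Next I would argue that every odd-dimensional indecomposable $M$ has a local endomorphism ring in $\mathcal{L}$. The starting point is that $\End_{kG\stab}(M)$ is a finite-dimensional local $k$-algebra, since $M$ is indecomposable and non-projective in $kG\Mod$. Using the calculus of fractions, any endomorphism of $M$ in $\mathcal{L}$ is represented by a roof $M \xleftarrow{s} M' \xrightarrow{f} M$ with $\cone(s) \in kG\stab_{(0)}$; the plan is to show that any such roof is equivalent to one in which $s = \id_M$, so that the canonical map $\End_{kG\stab}(M) \to \End_\mathcal{L}(M)$ is surjective and the target is a quotient of a local ring, hence local (since it is non-zero, as $M \notin kG\stab_{(0)}$). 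This is where the assumption that $k$ is algebraically closed enters, through the explicit description of the indecomposable $kG$-modules for $G = \mathbb{Z}/2\mathbb{Z} \times \mathbb{Z}/2\mathbb{Z}$ and their stable Homs.

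Finally, I would apply the classical Krull--Schmidt theorem for additive categories: if every object is a finite direct sum of objects with local endomorphism rings then every idempotent splits, so $\mathcal{L} = \mathcal{L}^{\natural}$.

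The main obstacle is the second step, namely controlling the Hom-groups in the Verdier quotient well enough to conclude that no genuinely new endomorphisms of an odd-dimensional indecomposable $M$ arise. An attractive alternative route---which I would pursue if the direct roof-calculus argument becomes unwieldy---is to identify $\mathcal{L}^{\natural}$ with a familiar, manifestly idempotent complete category (for instance finite-dimensional vector spaces over the function field $k(t)$ of the generic point of $\mathbb{P}^1_k$) and then show that the canonical functor $\mathcal{L} \to \mathcal{L}^{\natural}$ is already essentially surjective. Both routes ultimately rest on understanding the generic behaviour of odd-dimensional indecomposables, which over an algebraically closed field is tractable via shifted-subgroup theory.
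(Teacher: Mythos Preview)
Your primary route has a genuine gap in step~2. The map $\End_{kG\stab}(M)\to\End_{\mathcal{L}}(M)$ is \emph{not} surjective in general: already for the trivial module $M=k$ we have $\End_{kG\stab}(k)=k$, whereas in the quotient $\mathcal{L}$ the endomorphism ring of $k$ is the transcendental field extension $k(\zeta)$ (this is part of the Carlson--Donovan--Wheeler computation the paper cites). So roofs with $s\neq\id$ really are needed, and the ``quotient of a local ring is local'' argument cannot be run. The conclusion you want---that $\End_{\mathcal{L}}(M)$ is local---is nonetheless true, but it is true because that ring is a \emph{field}, not because it is a quotient of the stable endomorphism ring.

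Your alternative route is essentially what the paper does. The paper invokes \cite{carldonwhee}*{Example 5.1} to get that, over algebraically closed $k$, the only indecomposable in $\mathcal{L}$ up to isomorphism is $k$, with $\End_{\mathcal{L}}(k)\cong k(\zeta)$; hence $\mathcal{L}$ is equivalent to finite-dimensional vector spaces over $k(\zeta)$ and is therefore idempotent complete. Compared with your Krull--Schmidt strategy, this identification is both shorter and strictly stronger: once you know the endomorphism ring is a field and there is a single indecomposable, idempotent completeness is immediate without a separate Krull--Schmidt step. If you want to retain your framework, you should replace the surjectivity claim by a direct computation (or citation) of $\End_{\mathcal{L}}(k)$---but at that point the paper's one-line argument is already available.
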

\begin{proof}
	Under the additional hypothesis, it is shown in \cite{carldonwhee}*{Example 5.1} that up to isomorphism, the only indecomposable object in $\mathcal{L}$ is $k$, which has endomorphism ring $K := k(\zeta)$, a transcendental field extension of~$k$. It follows that $\mathcal{L}$ is equivalent to the category of finite-dimensional vector spaces over $K$, which is idempotent complete.
\end{proof}

This enables us to prove the following:
\begin{prop} Assume $k$ is algebraically closed. There is an isomorphism
	\[\Chow^{\Delta}_1(kG\stab) \cong \mathbb{Z}/2\mathbb{Z}.\]
	\label{Kleinfourexample2}
\end{prop}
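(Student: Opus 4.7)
The plan is to reduce the computation to $\Kzero(\mathcal{L})$ and exploit the classification of indecomposables recalled in the proof of Lemma \ref{Lisidempcomp}. Since $\mathcal{V}_G(k) \cong \mathbb{P}^1$ has Krull dimension one, $kG\stab_{(1)} = kG\stab_{(2)} = kG\stab$, so for $p=1$ the map $i$ in the diagram preceding Definition \ref{defchow} is the identity, $\ker(i) = 0$, and hence $\Chow^{\Delta}_1(kG\stab) = \Cyc^{\Delta}_1(kG\stab) = \Kzero(\mathcal{L}^{\natural})$. By Lemma \ref{Lisidempcomp} we may drop the idempotent completion to obtain $\Chow^{\Delta}_1(kG\stab) \cong \Kzero(\mathcal{L})$.

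I would then invoke the result of \cite{carldonwhee} already used in Lemma \ref{Lisidempcomp}: up to isomorphism, $k$ is the unique indecomposable object of $\mathcal{L}$ and $\End_{\mathcal{L}}(k) = K$ is a field, so every object is isomorphic to $k^{\oplus n}$ and $\Kzero(\mathcal{L})$ is cyclic with generator $[k]$. Since $\Sigma$ is an autoequivalence of $\mathcal{L}$, $\Sigma k$ is again indecomposable, hence $\Sigma k \cong k$ in $\mathcal{L}$. The cone triangle of the zero morphism $k \to 0$, namely the distinguished triangle $k \to 0 \to \Sigma k \xrightarrow{\id} \Sigma k$, then produces the relation $[\Sigma k] = -[k]$ in $\Kzero(\mathcal{L})$; combined with $\Sigma k \cong k$ this forces $2[k] = 0$, so $\Kzero(\mathcal{L})$ is a quotient of $\mathbb{Z}/2\mathbb{Z}$.

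To show the group is non-trivial I would construct a surjection $\phi: \Kzero(\mathcal{L}) \to \mathbb{Z}/2\mathbb{Z}$ defined on generators by $[X] \mapsto (\dim_K X) \bmod 2$. Every distinguished triangle in $\mathcal{L}$ is isomorphic to the cone triangle of a morphism $f: k^{\oplus a} \to k^{\oplus b}$, and such an $f$ is simply a matrix over the field $K$; choosing bases so that $f$ is in reduced form exhibits it as a direct sum of $\mathrm{rk}(f)$ copies of $\id_k$, together with $a - \mathrm{rk}(f)$ summands of the form $k \to 0$ and $b - \mathrm{rk}(f)$ summands of the form $0 \to k$. Using additivity of cones and the identifications $\cone(\id_k) = 0$, $\cone(k \to 0) = \Sigma k \cong k$ and $\cone(0 \to k) = k$, we then get $\cone(f) \cong k^{\oplus (a + b - 2\,\mathrm{rk}(f))}$, so $\dim_K(\cone f) \equiv a + b \pmod 2$ and $\phi$ is well-defined. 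Since $\phi([k]) = 1$, we conclude $\Kzero(\mathcal{L}) \cong \mathbb{Z}/2\mathbb{Z}$.

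The main obstacle I anticipate is this last well-definedness argument: rigorously justifying that the matrix-normal-form decomposition commutes with the cone construction inside the Verdier quotient $\mathcal{L}$. This should follow from general triangulated-category yoga (additivity of cones, functoriality of $\Sigma$, and exactness of the quotient functor $kG\stab \to \mathcal{L}$), but presenting it cleanly requires some care, particularly because in $\mathcal{L}$ the suspension acts trivially on iso classes and the usual dimension-count heuristics from ordinary abelian categories do not directly apply.
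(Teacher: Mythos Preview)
Your argument is correct, but it takes a different route from the paper. The paper avoids computing $\Kzero(\mathcal{L})$ from the internal structure of $\mathcal{L}$ altogether: it uses the localisation exact sequence
\[
\Kzero(kG\stab_{(0)}) \xrightarrow{\alpha} \Kzero(kG\stab) \to \Kzero(\mathcal{L}) \to 0,
\]
so that $\Kzero(\mathcal{L}) \cong \Kzero(kG\stab)/\im(\alpha)$, and then reads off $\im(\alpha) = 2\mathbb{Z}/4\mathbb{Z} \subset \mathbb{Z}/4\mathbb{Z}$ directly from the proof of Proposition~\ref{Kleinfourexample1}. This is shorter and reuses work already done. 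Your approach, by contrast, is self-contained in $\mathcal{L}$: cyclicity and $2[k]=0$ follow immediately from $\Sigma k \cong k$, and your non-triviality argument via $\dim_K \bmod 2$ is sound (additivity of cones under direct sums and the matrix normal form over the field $K$ are enough to justify the decomposition you worry about). The trade-off is that you do more explicit triangulated-category bookkeeping, whereas the paper gets non-triviality for free from the known image of $\alpha$. Note also that your non-triviality step could be shortened: since $\Kzero(kG\stab) \twoheadrightarrow \Kzero(\mathcal{L})$ and the class of $k$ generates $\mathbb{Z}/4\mathbb{Z}$, it suffices to observe that $\im(\alpha)$ is proper, which again is immediate from Proposition~\ref{Kleinfourexample1}.
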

\begin{proof}
	The sequence of triangulated categories 
	\[kG\stab_{(0)} \hookrightarrow kG\stab \to kG\stab/kG\stab_{(0)}\]
	induces an exact sequence
	\[\Kzero(kG\stab_{(0)}) \overset{\alpha}{\longrightarrow} \Kzero(kG\stab) \to \Kzero(kG\stab/kG\stab_{(0)}) \to 0\]
	where $\alpha$ is the map from Lemma \ref{lmak0z4z2}. Therefore, 
	\[\Chow^{\Delta}_1(kG\stab) \cong \Kzero\left((kG\stab/kG\stab_{(0)})^{\natural}\right) \cong \Kzero(kG\stab)/\im(\alpha) \cong \mathbb{Z}/2\mathbb{Z}\] 
	as follows from Lemma \ref{Lisidempcomp} and Lemma \ref{lmak0z4z2}.
\end{proof}

\subsection{Relative dimension of restriction and induction}
We finish the section by showing that stable induction and restriction functors from modular representation theory have a relative dimension as defined in Definition~\ref{funcreldimdef}.

\subsubsection*{Some auxiliary results from representation theory.}
Let us first recall some well-known representation-theoretic results. Let $G$ be a finite group and $k$ a field such that $\mathrm{char}(k) = p$ divides~$|G|$. If $H < G$ is a subgroup, then there is an isomorphism of left $kH$-modules
\[kG \cong \bigoplus_{H \backslash G} kH\]
which implies that the functor $\Ind^G_H: kH\Mod \to kG\Mod$ is exact, since it is given by taking the tensor product with a free module. The functor $\Res^G_H: kG\Mod \to kH\Mod$ is exact as well and $\Ind^G_H$ and $\Res^G_H$ are mutually adjoint on both sides (see e.g.~\cite{carlson}*{Proposition 3.2}). A functor between abelian categories with an exact right-adjoint preserves projective objects and therefore, both $\Ind^G_H$ and $\Res^G_H$ preserve projective modules and thus induce exact functors
\[\underline{\Ind}^G_H: kH\stab \to kG\stab\]
and
\[\underline{\Res}^G_H: kG\stab \to kH\stab~.\]
The functors $\underline{\Ind}^G_H$ and $\underline{\Res}^G_H$ are adjoint as well:
\begin{lma}
	Let 
	\[G:\mathcal{S} \to \mathcal{T}~, \quad F: \mathcal{T} \to \mathcal{S}\]
	be a pair of adjoint exact functors between triangulated categories $\mathcal{S},\mathcal{T}$ and let $\mathcal{S}' \subset \mathcal{S}, \mathcal{T}' \subset \mathcal{T}$ be thick triangulated subcategories such that $G(\mathcal{S}') \subset \mathcal{T}'$ and~$F(\mathcal{T}') \subset \mathcal{S}'$. Then the induced functors 
	\[\overline{G}: \mathcal{S}/\mathcal{S}' \to \mathcal{T}/\mathcal{T}'~, \quad \overline{F}: \mathcal{T}/\mathcal{T}' \to \mathcal{S}/\mathcal{S}'\]
	are adjoint as well.
	\label{lmaadjunctionquot}
\end{lma}
\begin{proof}
	Let $\epsilon: F \circ G \to \id_{\mathcal{S}}, \eta: \id_{\mathcal{T}} \to G \circ F$ denote the unit and counit of the adjunction between $F$ and~$G$. The compositions $F \circ G$ and $G \circ F$ induce functors $\overline{F \circ G}: \mathcal{S}/\mathcal{S}' \to \mathcal{S}/\mathcal{S}'$ and $\overline{G \circ F}: \mathcal{T}/\mathcal{T}' \to \mathcal{T}/\mathcal{T}'$ and we have equalities $\overline{F} \circ \overline{G} = \overline{F \circ G} $ and~$\overline{G} \circ \overline{F} = \overline{G \circ F}$. If $q_{\mathcal{S}}: \mathcal{S} \to \mathcal{S}/\mathcal{S'}, q_{\mathcal{T}}: \mathcal{T} \to \mathcal{T}/\mathcal{T}'$ denote the respective Verdier quotient functors, we have in particular that $q_{\mathcal{S}} \circ F \circ G = \overline{F \circ G} = \overline{F} \circ \overline{G}$ and $q_{\mathcal{T}} \circ G \circ F = \overline{G \circ F} = \overline{G} \circ \overline{F}$. Using this we obtain maps $\overline{\epsilon}_A: \overline{F} \circ \overline{G} (A) \to A$ and $\overline{\eta}_B: B \to \overline{G} \circ \overline{F} (B)$ for all objects $A \in \mathcal{S}/\mathcal{S}', B \in \mathcal{T}/\mathcal{T}'$  by defining $\overline{\epsilon}_{A} := q_{\mathcal{S}}(\epsilon_A)$ and~$\overline{\eta}_B := q_{\mathcal{T}}(\eta_B)$. In fact, the collections of all $\overline{\epsilon}_{A}$ and $\overline{\eta}_{B}$ define natural transformations $\overline{\epsilon}:  \overline{F} \circ \overline{G} \to \id_{\mathcal{S}/\mathcal{S}'}$ and $\overline{\eta}: \id_{\mathcal{T}/\mathcal{T}'} \to \overline{G} \circ \overline{F}$ and we see that they satisfy the required counit-unit equations by considering the corresponding equations for $\epsilon, \eta$ and applying the functors~$q_{\mathcal{S}}, q_{\mathcal{T}}$.
\end{proof}

\begin{thm}
	The functors $\underline{\Ind}^G_H$ and $\underline{\Res}^G_H$ form an adjoint pair that satisfies the projection formula, in the sense of Definition \ref{defprojform}.
	\label{thmresindprojform}
\end{thm}
\begin{proof}
	The adjunction $\Ind^G_H \dashv \Res^G_H$ directly passes to an adjunction between $\Db(kH\Mod)$ and $\Db(kG\Mod)$ since both functors are exact. Both functors preserve projectives and therefore $\Ind^G_H (\mathrm{K^b}(kH\proj)) \subset \mathrm{K^b}(kG\proj) $ and $\Res^G_H(\mathrm{K^b}(kG\proj)) \subset \mathrm{K^b}(kH\proj)$. It follows from Lemma \ref{lmaadjunctionquot} that we obtain an induced adjunction $\underline{\Ind}^G_H \dashv \underline{\Res}^G_H$. 
	
	Furthermore, Frobenius reciprocity (see e.g.~\cite{carlson}*{Theorem 3.1}) tells us that there are natural isomorphisms in $kG\Mod$
	\[\Ind^G_H(L) \otimes M \cong \Ind^G_H(L \otimes \Res^G_H(M))\]
	and these descend to the stable category to give us natural isomorphisms
	\[\underline{\Ind}^G_H(L) \otimes M \cong \underline{\Ind}^G_H(L \otimes \underline{\Res}^G_H(M))~.\]
	This shows that the pair $(\underline{\Ind}^G_H,\underline{\Res}^G_H)$ satisfies the projection formula as desired.
\end{proof}

The following Lemma will be useful when we discuss the relative dimension of the induction functor.
\begin{lma}
	Let $M \in kH\Mod$. Then 
	\[\dim_k(\Ind^G_H(M)) = [G:H] \cdot \dim_k(M).\]
	\label{dimind}
\end{lma}
\begin{proof}
	There is an isomorphism of $kH$-modules
	\[\Res^G_H\left(\Ind^G_H(M)\right) \cong \Res^G_H\left(kG \otimes_{kH} M\right) \cong \left(\bigoplus_{G/H} kH \right) \otimes_{kH} M \cong \bigoplus_{G/H} M\]
	which proves the lemma as $\Res^G_H$ leaves dimensions intact.
\end{proof}

\subsubsection*{Relative dimension of restriction.}
We now consider the stable restriction functor
\[\underline{\Res}^G_H: kG\stab \to kH\stab~.\] 
We fix the Krull dimension as a dimension function for $kG\stab$ and~$kH\stab$. Recall that for $M \in kX\stab$ we have $\dim(\supp(M)) = c_X(M) - 1$ for~$X=G,H$. We begin with the following easy observation:
\begin{lma}
	Let $M \in kG\Mod$. Then 
	\[\mathrm{c}_H\left(\Res^G_H(M)\right) \leq \mathrm{c}_G(M)\]
	\label{reslesscomplexity}
\end{lma}
\begin{proof}
	Assume that $M \in kG\Mod$ has complexity $s$. The functor $\Res^G_H$ sends a minimal projective resolution $P_{\bullet} \to M$ to a projective resolution $\Res^G_H(P_{\bullet}) \to \Res^G_H(M)$ of $\Res^G_H(M)$ since it is exact and preserves projectives. A minimal projective resolution $Q_{\bullet} \to \Res^G_H(M)$ admits an injective chain map to $\Res^G_H(P_{\bullet}) \to \Res^G_H(M)$ by definition, and therefore we must have 
	\[\dim_k(Q_n) \leq \dim_k(\Res^G_H(P_n)) = \dim_k(P_n) \leq \kappa \cdot n^{s-1}\]
	as $M \in kG\Mod$ had complexity $s$. This implies that $\Res^G_H(M)$ has complexity~${\leq s}$.
\end{proof}

With a little more work we can now compute the relative dimension of~$\underline{\Res}^G_H$:
\begin{thm}
	Let $H \subset G$ be a subgroup such that $p$ divides $|H|$. Then $\underline{\Res}^G_H$ has relative dimension 0.
	\label{thmresreldim0}
\end{thm}
\begin{proof}
	It follows from Lemma \ref{reslesscomplexity} that for all objects $M \in kG\stab$, we have the inequality 
	\[\dim(\supp(\underline{\Res}^G_H)) \leq \dim(\supp(M))\] 
	and thus, if $\underline{\Res}^G_H$ has a relative dimension, it must be $\leq 0$. In order to prove the statement, it therefore suffices to show that there is an object $M_0$ of $kG\stab$ such that $\dim(\supp(\underline{\Res}^G_H(M_0))) \geq \dim(\supp(M_0))$ and therefore $\dim(\supp(\underline{\Res}^G_H(M_0))) = \dim(\supp(M_0))$. 
	
	Let $P \in \mathcal{V}_H(k)$ be a closed point (which exists as $p$ divides $|H|$) and look at $Q = \Spc\left(\underline{\Res}^G_H\right)(P) \in \mathcal{V}_G(k)$ which is closed as well since the map $\Spc\left(\underline{\Res}^G_H\right)$ is closed (see \cite{balmersep}*{Theorem 2.4 (b)}). Take $M_0 \in kG\stab$ such that $\supp(M_0) = \lbrace Q \rbrace$. This is possible as we can realize any subvariety as the support of a module, see \cite{bensonrep2}*{Chapter 5.9}, or more abstractly \cite{balmer2005spectrum}*{Corollary 2.17}. Note that this means that $\dim(\supp(M_0)) = 0$. We know that 
	\[\supp(\underline{\Res}^G_H(M_0)) = \left(\Spc\left(\underline{\Res}^G_H\right)\right)^{-1} \left(\supp(M_0)\right)\]
	which must contain the closed point $P$. Thus, 
	\[\dim(\supp(\underline{\Res}^G_H(M_0))) \geq 0 = \dim(\supp(M_0))~,\] 
	which finishes the proof.
\end{proof}

\begin{rem}
	Assume that $p \nmid |H|$. Then by Maschke's theorem $kH$ is semi-simple (see e.g.\ \cite{carlson}*{Theorem 1.7}), which implies that every finitely generated left $kH$-module is projective. Consequently, $kH\stab = 0$ and $\underline{\Res}^G_H(M) = 0$ for all modules $M \in kG\stab$. As $\dim(\supp(0)) = \dim(\emptyset) = -\infty$, the functor $\underline{\Res}^G_H$ does not have a relative dimension in this case.
\end{rem}

\subsubsection*{Relative dimension of induction.}
Let us consider the stable induction functor
\[\underline{\Ind}^G_H: kH\stab \to kG\stab\] 
next. Again, we fix the Krull dimension as a dimension function for $kH\stab$ and~$kG\stab$.
\begin{lma}
	Let $M \in kH\Mod$. Then
	\[\mathrm{c}_G\left(\Ind^G_H(M)\right) \leq \mathrm{c}_H(M)\]
	\label{indupperbound}
\end{lma}
\begin{proof}
	Let $M \in kH\Mod$ have minimal projective resolution $P_{\bullet} \to M$. Assume that $M$ has complexity $s$, then $\dim_k(P_n) \leq \kappa \cdot n^{s-1}$ for all $n$ and some constant $\kappa$. As $\Ind^G_H$ is an exact functor that preserves projectives, $\Ind^G_H(P_{\bullet}) \to \Ind^G_H(M)$ is a projective resolution of $\Ind^G_H(M)$. By Lemma \ref{dimind}, we have that 
	\[\dim_k \Ind^G_H(P_n) = [G:H] \dim_k(P_n) \leq [G:H] \kappa \cdot n^{s-1} \]
	from which it follows that a minimal projective resolution of $\Ind^G_H(M)$ has growth rate at most $s-1$, as it admits an injective chain map to $\Ind^G_H(P_{\bullet}) \to \Ind^G_H(M)$. Thus $\Ind^G_H(M)$ has complexity at most $s$.
\end{proof}

We now need two easy auxiliary lemmas concerning projective $kG$-modules. Recall that~$p=\mathrm{char}(k)$.
\begin{lma}[see \cite{carlson}*{Corollary 1.6}]
	Let $p^a$ be the exact power of $p$ dividing $|G|$ and $P$ a projective $kG$-module. Then $p^a$ divides $\dim_k(P)$.
	\label{projectivitycrit}
\end{lma}
\begin{proof}
	Let $S \leq G$ be a Sylow $p$-subgroup of order $p^a$. Then $\Res^G_S(P)$ is a projective $kS$-module. As $S$ is a $p$-group, projectivity and freeness of $kS$-modules coincide, so $\dim_k(\Res^G_S(P)) = \dim_k(P)$ is a multiple of $|S| = p^a$.
\end{proof}

\begin{lma}
	Let $H \leq G$ be a subgroup such that $p$ divides $|H|$. Then we have $\underline{\Ind}^G_H(k) \neq 0$, so in particular $\supp(\underline{\Ind}^G_H(k))$ is a non-empty closed subset of $V_G(k)$.
	\label{lmasuppindknonempty}
\end{lma}
\begin{proof}
	The module $\underline{\Ind}^G_H(k)$ being non-zero is equivalent to $\Ind^G_H(k)$ being non-projective. We know that $\Ind^G_H(k)$ is the permutation representation on the cosets $G/H$, which has dimension $[G:H]$. If $p^a$ is the exact power of $p$ dividing $|G|$, then assuming that $p$ divides $|H|$ tells us that $p^a \nmid [G:H] = \dim_k(\Ind^G_H(k))$ which implies that $\Ind^G_H(k)$ cannot be projective by Lemma \ref{projectivitycrit}.
\end{proof}

\begin{cor}
	Let $H \leq G$ be a subgroup such that $p$ divides $|H|$. Then $\underline{\Ind}^G_H$ has a relative dimension and it is $\leq 0$.
	\label{corindreldimex}
\end{cor}
\begin{proof}
	From Lemma \ref{indupperbound}, we see that $\underline{\Ind}^G_H(kH\stab_{(n)}) \subset kG\stab_{(n)}$ for all $n \in \mathbb{Z}_{\geq 0}$. Since we have $\dim(\supp(\underline{\Ind}^G_H(k))) \geq 0$ by Lemma \ref{lmasuppindknonempty}, it follows that $\dim\left(\underline{\Ind}^G_H\right) > -\infty$.
\end{proof}

\begin{thm}
	Let $H \leq G$ be a subgroup such that $p$ divides $|H|$. Then the functor $\underline{\Ind}^G_H$ has relative dimension $0$.
	\label{thmreldimind0}
\end{thm}
\begin{proof}
	By Theorem \ref{thmresindprojform}, Theorem \ref{thmresreldim0}, Lemma \ref{lmasuppindknonempty} and Corollary \ref{corindreldimex}, the assumptions of Corollary \ref{reldimsum} are satisfied. Therefore
	\[0 \leq \dim\left(\underline{\Res}^G_H\right) + \dim\left(\underline{\Ind}^G_H\right) = \dim\left(\underline{\Ind}^G_H\right)\]
	since we already know that the relative dimension of $\underline{\Res}^G_H$ is zero. Together with Corollary \ref{corindreldimex} this yields that the relative dimension of $\underline{\Ind}^G_H$ is $0$.
\end{proof}

\begin{rem}
	If  $p \nmid |H|$, then  $kH\stab = 0$ and $\underline{\Ind}^G_H$ is the inclusion of $0$ into $kG\stab$. This functor does not have a relative dimension as 
	\[\dim(\supp(0)) = \dim(\emptyset) = -\infty~.\]
\end{rem}

\begin{bibdiv}
\addcontentsline{toc}{section}{References}
\begin{biblist}
\bib{balmer2005spectrum}{article}{
	author={Balmer, Paul},
	title={The spectrum of prime ideals in tensor triangulated categories},
	date={2005},
	ISSN={0075-4102},
	journal={J. Reine Angew. Math.},
	volume={588},
	pages={149\ndash 168},
	url={http://dx.doi.org/10.1515/crll.2005.2005.588.149},
}

\bib{balmerfiltrations}{article}{
	author={Balmer, Paul},
	title={Supports and filtrations in algebraic geometry and modular
		representation theory},
	date={2007},
	ISSN={0002-9327},
	journal={Amer. J. Math.},
	volume={129},
	number={5},
	pages={1227\ndash 1250},
	url={http://dx.doi.org/10.1353/ajm.2007.0030},
}

\bib{balmer2010spectra}{article}{
	author={Balmer, Paul},
	title={Spectra, spectra, spectra --- {T}ensor triangular spectra versus
		{Z}ariski spectra of endomorphism rings},
	date={2010},
	ISSN={1472-2747},
	journal={Algebr. Geom. Topol.},
	volume={10},
	number={3},
	pages={1521\ndash 1563},
	url={http://dx.doi.org/10.2140/agt.2010.10.1521},
}

\bib{balmer2010tensor}{inproceedings}{
	author={Balmer, Paul},
	title={Tensor triangular geometry},
	date={2010},
	booktitle={Proceedings of the {I}nternational {C}ongress of
		{M}athematicians. {V}olume {II}},
	publisher={Hindustan Book Agency},
	address={New Delhi},
	pages={85\ndash 112},
}

\bib{balmerchow}{article}{
	author={Balmer, Paul},
	title={Tensor triangular {C}how groups},
	date={2013},
	ISSN={0393-0440},
	journal={J. Geom. Phys.},
	volume={72},
	pages={3\ndash 6},
	url={http://dx.doi.org/10.1016/j.geomphys.2013.03.017},
}

\bib{balmersplit}{article}{
	author={Balmer, Paul},
	title={Splitting tower and degree of tt-rings},
	journal={Algebra Number Theory},
	volume={8},
	date={2014},
	number={3},
	pages={767--779},
	issn={1937-0652},
}

\bib{balmersep}{misc}{
	author={Balmer, Paul},
	title={Separable extensions in tt-geometry and generalized {Q}uillen
		stratification},
	date={2014},
	note={preprint, arXiv:1309.1808 [math.CT]},
}

\bib{balbencarl}{article}{
	author={Balmer, Paul},
	author={Benson, David~J.},
	author={Carlson, Jon~F.},
	title={Gluing representations via idempotent modules and constructing
		endotrivial modules},
	date={2009},
	journal={Journal of Pure and Applied Algebra},
	volume={213},
	number={2},
	pages={173\ndash 193},
}

\bib{balschlichidem}{article}{
	author={Balmer, Paul},
	author={Schlichting, Marco},
	title={Idempotent completion of triangulated categories},
	date={2001},
	ISSN={0021-8693},
	journal={J. Algebra},
	volume={236},
	number={2},
	pages={819\ndash 834},
	url={http://dx.doi.org/10.1006/jabr.2000.8529},
}

\bib{bensonrep1}{book}{
	author={Benson, David~J.},
	title={Representations and cohomology. {I}},
	edition={Second},
	series={Cambridge Studies in Advanced Mathematics},
	publisher={Cambridge University Press},
	address={Cambridge},
	date={1998},
	volume={31},
	ISBN={0-521-63652-3},
	note={Cohomology of groups and modules},
}

\bib{bensonrep2}{book}{
	author={Benson, David~J.},
	title={Representations and cohomology. {II}},
	edition={Second},
	series={Cambridge Studies in Advanced Mathematics},
	publisher={Cambridge University Press},
	address={Cambridge},
	date={1998},
	volume={30},
	ISBN={0-521-63653-1},
	note={Basic representation theory of finite groups and associative
		algebras},
}

\bib{bencarlrick}{article}{
	author={Benson, David J.},
	author={Carlson, Jon F.},
	author={Rickard, Jeremy},
	title={Thick subcategories of the stable module category},
	journal={Fund. Math.},
	volume={153},
	date={1997},
	number={1},
	pages={59--80},
	issn={0016-2736},
}

\bib{bkssupport}{article}{
	author={Buan, Aslak~Bakke},
	author={Krause, Henning},
	author={Solberg, {\O}yvind},
	title={Support varieties: an ideal approach},
	date={2007},
	ISSN={1532-0073},
	journal={Homology, Homotopy Appl.},
	volume={9},
	number={1},
	pages={45\ndash 74},
	url={http://projecteuclid.org/getRecord?id=euclid.hha/1175791087},
}

\bib{carlson}{book}{
	author={Carlson, Jon~F.},
	title={Modules and group algebras},
	series={Lectures in Mathematics ETH Z\"urich},
	publisher={Birkh\"auser Verlag},
	address={Basel},
	date={1996},
	ISBN={3-7643-5389-9},
	url={http://dx.doi.org/10.1007/978-3-0348-9189-9},
}

\bib{carldonwhee}{article}{
	author={Carlson, Jon~F.},
	author={Donovan, P.~W.},
	author={Wheeler, Wayne~W.},
	title={Complexity and quotient categories for group algebras},
	date={1994},
	ISSN={0022-4049},
	journal={J. Pure Appl. Algebra},
	volume={93},
	number={2},
	pages={147\ndash 167},
	url={http://dx.doi.org/10.1016/0022-4049(94)90109-0},
}

\bib{cheboluklein4}{article}{
	author={Chebolu, Sunil},
	author={Min{\'a}c, J{\'a}n},
	title={Representations of the miraculous Klein group},
	journal={Math. Newsl.},
	volume={21/22},
	date={2012},
	number={4-1},
	pages={135--145},
	issn={0971-1694},
}

\bib{fulton}{book}{
	author={Fulton, William},
	title={Intersection theory},
	edition={Second},
	series={Ergebnisse der Mathematik und ihrer Grenzgebiete. 3. Folge. A
		Series of Modern Surveys in Mathematics},
	publisher={Springer-Verlag},
	address={Berlin},
	date={1998},
	volume={2},
	ISBN={3-540-62046-X; 0-387-98549-2},
	url={http://dx.doi.org/10.1007/978-1-4612-1700-8},
}

\bib{ega2}{article}{
	author={Grothendieck, Alexander},
	title={\'{E}l\'ements de g\'eom\'etrie alg\'ebrique. {II}. \'{E}tude
		globale \'el\'ementaire de quelques classes de morphismes},
	date={1961},
	ISSN={0073-8301},
	journal={Inst. Hautes \'Etudes Sci. Publ. Math.},
	number={8},
	pages={222 pp.},
}

\bib{EGA4}{article}{
	author={Grothendieck, Alexander},
	title={\'El\'ements de g\'eom\'etrie alg\'ebrique. IV. \'Etude locale des
		sch\'emas et des morphismes de sch\'emas IV},
	journal={Inst. Hautes \'Etudes Sci. Publ. Math.},
	number={32},
	date={1967},
	pages={361 pp.},
	issn={0073-8301},
}

\bib{huybrechts}{book}{
	author={Huybrechts, Daniel},
	title={Fourier-{M}ukai transforms in algebraic geometry},
	series={Oxford Mathematical Monographs},
	publisher={The Clarendon Press Oxford University Press},
	address={Oxford},
	date={2006},
	ISBN={978-0-19-929686-6; 0-19-929686-3},
	url={http://dx.doi.org/10.1093/acprof:oso/9780199296866.001.0001},
}

\bib{keller}{article}{
	author={Keller, Bernhard},
	title={On the cyclic homology of exact categories},
	date={1999},
	ISSN={0022-4049},
	journal={J. Pure Appl. Algebra},
	volume={136},
	number={1},
	pages={1\ndash 56},
	url={http://dx.doi.org/10.1016/S0022-4049(97)00152-7},
}

\bib{maclanecwm}{book}{
	author={Mac~Lane, Saunders},
	title={Categories for the working mathematician},
	edition={Second},
	series={Graduate Texts in Mathematics},
	publisher={Springer-Verlag, New York},
	date={1998},
	volume={5},
	ISBN={0-387-98403-8},
}

\bib{neemantc}{book}{
	author={Neeman, Amnon},
	title={Triangulated categories},
	series={Annals of Mathematics Studies},
	publisher={Princeton University Press},
	address={Princeton, NJ},
	date={2001},
	volume={148},
	ISBN={0-691-08685-0; 0-691-08686-9},
}

\bib{quillenhigher}{article}{
	author={Quillen, Daniel},
	title={Higher algebraic $K$-theory. I},
	conference={
		title={Algebraic K-theory, I: Higher K-theories},
		address={Proc. Conf., Battelle Memorial Inst., Seattle, Wash.},
		date={1972},
	},
	book={
		publisher={Springer, Berlin},
	},
	date={1973},
	pages={85--147. Lecture Notes in Math., Vol. 341},
}

\bib{rickardstable}{article}{
	author={Rickard, Jeremy},
	title={Derived categories and stable equivalence},
	date={1989},
	ISSN={0022-4049},
	journal={J. Pure Appl. Algebra},
	volume={61},
	number={3},
	pages={303\ndash 317},
	url={http://dx.doi.org/10.1016/0022-4049(89)90081-9},
}

\bib{serrealgloc}{book}{
	author={Serre, Jean-Pierre},
	title={Alg\`ebre locale. {M}ultiplicit\'es},
	series={Cours au Coll\`ege de France, 1957--1958, r\'edig\'e par Pierre
		Gabriel. Seconde \'edition, 1965. Lecture Notes in Mathematics},
	publisher={Springer-Verlag},
	address={Berlin},
	date={1965},
	volume={11},
}

\bib{stacks-project}{misc}{
	author={{Stacks Project Authors}, The},
	title={Stacks project},
	date={2014},
	note={Available at \url{http://stacks.math.columbia.edu}},
}

\bib{tachikawak0}{article}{
	author={Tachikawa, Hiroyuki},
	author={Wakamatsu, Takayoshi},
	title={Cartan matrices and {G}rothendieck groups of stable categories},
	date={1991},
	ISSN={0021-8693},
	journal={J. Algebra},
	volume={144},
	number={2},
	pages={390\ndash 398},
	url={http://dx.doi.org/10.1016/0021-8693(91)90111-K},
}

\bib{thomasonclassification}{article}{
	author={Thomason, Robert~W.},
	title={The classification of triangulated subcategories},
	date={1997},
	journal={Compositio Mathematica},
	volume={105},
	number={1},
	pages={1\ndash 27},
}

\bib{thomason-trobaugh}{article}{
	author={Thomason, Robert W.},
	author={Trobaugh, Thomas},
	title={Higher algebraic K-theory of schemes and of derived categories},
	conference={
		title={The Grothendieck Festschrift, Vol.\ III},
	},
	book={
		series={Progr. Math.},
		volume={88},
		publisher={Birkh\"auser Boston, Boston, MA},
	},
	date={1990},
	pages={247--435},
}

\bib{waldktheory}{article}{
	author={Waldhausen, Friedhelm},
	title={Algebraic K-theory of spaces},
	conference={
		title={Algebraic and geometric topology},
		address={New Brunswick, N.J.},
		date={1983},
	},
	book={
		series={Lecture Notes in Math.},
		volume={1126},
		publisher={Springer, Berlin},
	},
	date={1985},
	pages={318--419},
}

\bib{weibelkbook}{book}{
	author={Weibel, Charles~A.},
	title={The K-book},
	series={Graduate Studies in Mathematics},
	publisher={American Mathematical Society, Providence, RI},
	date={2013},
	volume={145},
	ISBN={978-0-8218-9132-2},
}
\end{biblist}
\end{bibdiv}
\small{\textsc{Sebastian Klein, Universiteit Antwerpen, Departement Wiskunde-Informatica, Middelheimcampus, Middelheimlaan 1, 2020 Antwerp, Belgium}\\
\textit{E-mail address:} \texttt{sebastian.klein@uantwerpen.be}}

\end{document}